\newtheorem{theorem}{Theorem}[section]
\newtheorem{lemma}[theorem]{Lemma}
\newtheorem{corollary}[theorem]{Corollary}
\newenvironment{Proof}{\textit{Proof.}}{$\hfill\Box$\\}
\DeclareMathOperator{\rg}{rg}
\DeclareMathOperator{\sign}{sign}
\DeclareMathOperator{\supp}{supp}
\DeclareMathOperator{\kernel}{ker}
\newcommand{\RR}{\mathbb{R}}
\newcommand{\inner}[2]{\langle #1,#2\rangle}
\newcommand{\ourtitle}{Testable uniqueness conditions for empirical assessment of undersampling levels in  total variation-regularized x-ray CT}
\newcommand{\spikes}{\textbf{spikes}}
\newcommand{\signedspikes}{\textbf{signed-spikes}}
\newcommand{\trununif}{\textbf{truncated-uniform}}
\newcommand{\altproj}{\textbf{alternating-projection}}
\newcommand{\altprojnonneg}{\textbf{alternating-projection-nonneg}}
\newcommand{\pnf}{\textup}
\newcommand{\ellone}{\pnf{L1}}
\newcommand{\atv}{\pnf{ATV}}
\newcommand{\itv}{\pnf{ITV}}
\newcommand{\dmat}{\textrm{\bf D}}
\newcommand{\dlinop}{\mathcal{D}}
\newcommand{\sysmat}{\textrm{\bf{A}}}
\newcommand{\sino}{\bm b}
\newcommand{\im}{\bm x}
\newcommand{\yvec}{\bm y}
\newcommand{\zvec}{\bm z}
\newcommand{\zcapvec}{\bm Z}
\newcommand{\wvec}{\bm w}
\newcommand{\vvec}{\bm v}
\begin{document}

\title{\ourtitle}

\author{Jakob S. J\o{}rgensen\footnote{Corresponding author. Email: jakj@dtu.dk. Department of Applied Mathematics and Computer Science, Technical University of Denmark, Richard Petersens Plads, 2800 Kgs. Lyngby, Denmark. }~, Christian Kruschel\footnote{Institute for Analysis and Algebra, Technical University of Braunschweig, Pockelsstra\ss{}e 14, 38106 Braunschweig, Germany.}~, and Dirk A. Lorenz$^{\dagger}$}

\maketitle

\begin{abstract}
  We study recoverability in fan-beam computed tomography (CT) with sparsity and total variation priors:
 how many underdetermined linear measurements suffice for recovering images of given sparsity?
Results from compressed sensing (CS) establish such conditions for, e.g., random measurements, but not for CT.
Recoverability is typically tested by checking whether a computed solution recovers the original. 
This approach cannot guarantee solution uniqueness and the recoverability decision therefore depends on the optimization algorithm.
We propose new computational methods to test recoverability by verifying solution uniqueness conditions.
Using both reconstruction and uniqueness testing we empirically study the number of CT measurements sufficient for recovery on new classes of sparse test images.
We demonstrate an average-case relation between sparsity and sufficient sampling and observe a sharp phase transition as known from CS, but never established for CT.
In addition to assessing recoverability more reliably, we show that uniqueness tests are often the faster option. 
\end{abstract}

\textbf{Keywords:} Computed tomography; total variation; sparse regularization; uniqueness conditions.

\section{Introduction}
Regularization methods for tomographic reconstruction that exploit sparsity have been in the focus of research recently.
Motivated by the theory of compressed sensing (CS)~\cite{candes2006compressivesampling,Donoho2006} many papers proposed to use sparse or total variation (TV) regularization to compute tomographic reconstructions from underdetermined measurements~\cite{SidkyTV:06,sidky2008image, ritschl2011improved}. One promising goal is unchanged or even improved reconstruction quality from a significantly reduced sampling effort, thereby lowering the necessary radiation dose in medical computed tomography (CT) and scanning time in e.g. materials science and non-destructive testing.

Compressed sensing offers methodologies to predict under what circumstances it is possible to compute exact reconstructions from underdetermined linear measurements.
Usually these conditions depend both on the measurement matrix and on the signal class that is considered.
Roughly spoken, a standard result reads as follows: All vectors that are sparse enough can be reconstructed exactly from underdetermined measurements with a random matrix (e.g. all entries independently, identically Gaussian distributed) by computing the solution of the linear system that has the smallest $\ell^1$-norm~\cite{CandesTao2006nearoptimal}.
There are also results~\cite{needell2013cstv} that state exact recovery conditions for TV regularization~\cite{rudin1992nonlinear}. 
An overview of CS recovery guarantees can be found in \cite{FoucartRauhut:2013}.
It is generally acknowledged, however, that existing guarantees either do not apply to or give extremely pessimistic bounds in deterministic sampling contexts \cite{elad2010sparse}. In particular for CT, \cite{PetraSchnoerr2014,Denitiu2013,Joergensen_eqconpap_v2_arxiv:2014} describe the lack of general guarantees, while \cite{PetraSchnoerr2014,Denitiu2013} derive preliminary average-case results for certain restricted special geometries known as discrete geometry; however these results do not cover regular sampling patterns in CT, such as parallel-beam and fan-beam geometries.

In our recent work \cite{Joergensen_TMI:2013,Joergensen_eqconpap_v2_arxiv:2014} we have been interested in establishing conditions on sparsity and sampling levels sufficient for image recovery with regular CT sampling patterns. In particular, \cite{Joergensen_TMI:2013} suggests a link between gradient sparsity and sufficient sampling for accurate TV-reconstruction.
In \cite{Joergensen_eqconpap_v2_arxiv:2014}, we carried out empirical studies of the average sufficient number of CT fan-beam projection views for $\ell^1$-recovery as function of image sparsity. Using a phase diagram similar to the Donoho-Tanner \cite{DonohoTanner:2009} phase diagram 
we showed that $\ell^1$-recovery often admits sharp phase transitions
as the sampling level is increased, and that the critical sampling level increases with the number of image nonzeros. 

The present work considerably expands on the results of \cite{Joergensen_eqconpap_v2_arxiv:2014} by addressing two limitations. First, while $\ell^1$-regularization is useful for CT, TV-regularization is often a more successful sparsity prior for CT, because many objects have a piecewise constant appearance. The present work extends to study recovery using anisotropic and isotropic TV-regularization and as a step towards this proposes ways to generate images of a desired gradient sparsity.	
Second, both $\ell^1$ and TV reconstruction is possibly subject to non-unique solutions. The approach of \cite{Joergensen_eqconpap_v2_arxiv:2014} considers an image to be uniquely recoverable  if numerical solution of the relevant optimization problem recovers the original image, but does not consider whether the computed solution is unique. In the present work, we derive uniqueness tests that can be used to computationally verify $\ell^1$ and TV solution uniqueness.
We then compute phase diagrams using both reconstruction and uniqueness tests to verify the $\ell^1$ recovery results from \cite{Joergensen_eqconpap_v2_arxiv:2014} and for anisotropic and isotropic versions of TV. In all cases, we observe a pronounced average-case relation between sparsity and the sufficient sampling level for recovery as well as a sharp phase transition. We also compare the reconstruction and uniqueness test approaches in terms of computing time.

The paper is organized as follows. 
Section~\ref{sec:ctprob} describes the CT imaging model and $\ell^1$-norm and total variation regularization problems in study.
Section~\ref{sec:unique} describes necessary and sufficient conditions for solution uniqueness, while 
Section~\ref{sec:opti} presents our numerical implementations of reconstruction and uniqueness tests. 
Section~\ref{sec:study-design} describes how to generate test images with desired image or gradient sparsity.
Section~\ref{sec:results} presents our results establishing empirically a relation between sparsity and the  average sufficient sampling level for recovery. 
Finally, Section~\ref{sec:conclusion} discusses the results and concludes the paper.

\section{Sparse image reconstruction methods for computed tomography} \label{sec:ctprob}

\subsection{Imaging model}
\label{sec:imag-model}
Imaging by computed tomography (CT) exploits that x-rays are attenuated when passing through matter. The attenuation depends on the material traversed by the x-ray, as described by the so-called linear attenuation coefficient $\mu$, with denser materials generally attenuating more. The intensities $I_0$ and $I$ of an x-ray before and after passing through an object with linear attenuation coefficient $\mu({\bm s})$, as function of the spatial coordinate $\bm s$, can be modeled by Lambert-Beer's law, see, e.g., \cite{BuzugBook:2008}, which in a rearranged form reads
\begin{align}
 - \log \frac{I}{I_0} = \int_L \mu({\bm s}) \mathrm{d} {\bm s}, \label{eq:lambertbeerrearranged}
\end{align}
where $\int_L \mu({\bm s}) \mathrm{d} {\bm s}$ denotes the line integral along the line $L$ describing the x-ray path. 
By means of discretizing the object into $n$ pixels and the data by assuming that $m$ individual x-rays with infinitesimal width are used, a fully discretized imaging model can be written $\sino{} = \sysmat{}\im{}$. Here, $\im{}$ is a vector of length $n$ of all pixel values stacked. $\sysmat{}$ is an $m$-by-$n$ matrix of which the $(i,j)th$ element, $\sysmat{}_{ij}$, equals the path length of the $i$th ray through pixel $j$, such that $\sum_j \sysmat{}_{ij}\im{}_j$ approximates the line integral in \eqref{eq:lambertbeerrearranged} for ray $i$. Each ray only intersects a small number of pixels (for a square $\sqrt{n}$-by-$\sqrt{n}$ array of the order of $\sqrt{n}$), causing the remaining $\sysmat{}_{ij}$ values to be zero and $\sysmat{}$ to be very sparse. $\sino{}$ is a vector of length $m$ with the log-transformed data, i.e., $\sino{}_i =  - \log (I_i/I_0)$ for rays $i = 1,\dots,m$.

In the present work, we consider a 2-D fan-beam geometry with equi-angular projection views acquired from $360^\circ$ around the image. Due to rotational symmetry we consider the image to be the largest inscribed disk within a square $N_\text{side}$-by-$N_\text{side}$ pixel array, hence consisting of $n \approx  \pi/4 \cdot N_\text{side}^2$ pixels. 
The source-to-detector distance is set to $2N_\text{side}$ and the detector has the shape of a circular arc centered at the source and consists of $2N_\text{side}$ detector elements. The number of projection views is denoted $N_\text{v}$ and the fan angle is set to $28.07^\circ$ so that precisely the inscribed disk is covered. The total number of linear measurements is $m = 2N_\text{side}N_\text{v}$ and the $m$-by-$n$ system matrix $\sysmat{}$ is computed using the function \texttt{fanbeamtomo} in the MATLAB package AIR Tools \cite{Hansen2012}.

\subsection{Sparse regularization}
\label{sec:var-reg}

In the context of an underdetermined system $\sino{} = \sysmat{}\im{}$, $\sysmat{} \in \RR^{m\times{}n}$, $m<n$,  one obtains a whole affine space of solutions.
One selects one solution of this space by considering a regularization functional $R$, and computing the minimum-$R$ solution, i.e. the solution of
\[
\min_x R(\im{})\ \text{subject to}\ \sysmat{}\im{}=\sino{}.  \label{eq:eqcon}
\]
In the present work we study
$\ell^1$-norm (\ellone{}), anisotropic TV (\atv{}), and 
isotropic TV (\itv{}) regularization.
First, the $\ell^1$-norm, defined as
  \begin{equation}
\ellone{}:\quad\quad    R(\im{}) = \|\im{}\|_1 = \sum_j |\im{}_j|, \label{eq:ell1}
  \end{equation}
  enforces sparsity of the minimizer. 
  $\ell^1$-norm minimization forms one backbone of compressed sensing and the problem of computing minimum-$\ell^1$-norm solution of underdetermined systems is also known as Basis Pursuit~\cite{Chen:1998}. 

  Second, anisotropic TV can be written using
  a set of vectors $\bm d_i$, $i=1,\dots,N$ (of length $n$ as $\im{}$) and the matrix $\dmat{} = [\bm d_1,\dots,\bm d_N]$ as
  \begin{equation}
    \atv{}:\quad\quad R(\im{}) = \|\dmat{}^T \im{}\|_1 = \sum_{i=1}^N|\bm d_i^T \im{}|.\label{eq:analysis-ell1}
  \end{equation}
  The matrix $\dmat{}$ is called dictionary in this context and the minimum-$R$ solution is seeking a solution in which inner products with the dictionaries entries form a sparse vector, for example to enforce sparsity in the coefficients of a wavelet basis or a dictionary learned from training images.
  We use \atv{} to denote the general case but focus in the present work on the anisotropic TV, which is a special case where $\dmat{}^T$ contains finite-difference approximations of the horizontal and vertical derivatives in each pixel, i.e., $N = 2n$. 
  However, we emphasize that all our theoretical results hold in the general case.
  
 Third, isotropic TV can be written as a special case of the group sparsity problem \cite{Huang2010}. This problem 
 differs in a small but crucial point from \atv{}:
Here we do not simply take the $\ell^1$-norm of $\dmat{}^T\im{}$ as objective function, but we take a mixed $\ell^{1,2}$-norm.
To fix notation, consider a linear mapping $\dlinop :\RR^{r\times p}\to \RR^n$, i.e. the transposed map is $\dlinop^T:\RR^n\to\RR^{r\times p}$, where $r$ is the number of groups and $p$ is the number of pixels in each group. For $\bm Y \in \RR^{r\times p}$ we consider the mixed $\ell^{1,2}$-norm
\[
\|\bm Y\|_{1,2} = \sum_{i=1}^r \|\bm Y_i\|_2 = \sum_{i=1}^r\Big(\sum_{j=1}^p|\bm Y_{i,j}|^2\Big)^{1/2},
\]
where $\bm Y_i\in \RR^p$ denotes the $i$-th row of $\bm Y$.
We then set
  \begin{equation}
    \itv{}: \quad\quad R(\im{}) = \|\dlinop^T \im{} \|_{1,2} = \sum_{i=1}^r \|(\dlinop^T \im{})_i\|_2, \label{eq:iso-TV}
  \end{equation}
  We use \itv{} to denote the general case but focus in the present work on the isotropic TV, for which we take each group to be the horizontal and vertical finite-difference partial-derivative approximations in each pixel, i.e., $p=2$ and $r=n$.   Again, our theoretical results hold in the general case.
  
  Note that the matrix $\dmat{}$ in \atv{} and linear mapping $\dlinop{}$ in \itv{} are closely related for the case of finite differences, namely
  \begin{equation*}
   (\dlinop{}^T\im{})_i = [\bm d_i^T\im{}, \bm d_{i+n}^T\im{}] \quad \text{for all} \quad  i\le n.
  \end{equation*}

\section{Necessary and sufficient conditions for uniqueness of minimizer} \label{sec:unique}
In this section, conditions for the uniqueness of the considered optimization problems are introduced.
We use the following notation.
The complement of an index set $I\subset\{1,...,n\}$ is denoted as $I^c = \{1,...,n\}\backslash I$.
If $\sysmat{}\in\mathbb{R}^{m\times n}, m < n$, then $\sysmat{}_I$ denotes the submatrix of $\sysmat{}$ with columns indexed by $I$.
The transposed of such a submatrix is denoted by $\sysmat{}_I^T$.

\subsection{Uniqueness conditions for \ellone{}}
The following theorem gives necessary and sufficient conditions for a vector $\im{}^*$ to solve \ellone{} uniquely.
\begin{theorem}[\cite{Plumbley:2007}]\label{thm:dual-cert}
Let $\sysmat{}\in\RR^{m\times n}$ with $m<n$ and $\im{}^*\in\RR^n$ with $I = \supp(\im{}^*)$.
Then $\im{}^*$ is the unique solution of 
\begin{equation}\label{eq:BP}
  \ellone{}:\quad\quad\min_{\yvec{}}\|\yvec{}\|_1~\mbox{subject to}~\sysmat{}\yvec{}=\sysmat{}\im{}^*
\end{equation}
if and only if 
\begin{equation}
\kernel(\sysmat{}_I) = \{\bm 0\}
\end{equation}
and there exists $\wvec{}\in\RR^m$ such that
\begin{equation}
  \label{eq:dual-cert}
  \sysmat{}_I^T\wvec{} = \sign(\im{}^*)_I,~~\|\sysmat{}_{I^c}^T\wvec{}\|_\infty < 1.
\end{equation}
\end{theorem}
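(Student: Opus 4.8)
The plan is to reduce everything to first-order optimality for the convex, piecewise-linear objective $\|\cdot\|_1$ on the affine feasible set $\{\yvec{} : \sysmat{}\yvec{} = \sysmat{}\im{}^*\}$, and to treat the two implications separately. Sufficiency is the elementary direction, driven by a single inequality chain built from the dual certificate $\wvec{}$. Necessity splits into the injectivity condition, obtained by a perturbation argument, and the existence of a \emph{strict} certificate, which is the genuinely delicate step and which I would handle by duality.

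For sufficiency, assume $\kernel(\sysmat{}_I) = \{\bm 0\}$ and that $\wvec{}$ as in \eqref{eq:dual-cert} exists. Let $\yvec{}$ be any feasible point. Since every coordinate of $\sysmat{}^T\wvec{}$ has modulus at most $1$, H\"older's inequality and feasibility give $\|\yvec{}\|_1 \ge \inner{\sysmat{}^T\wvec{}}{\yvec{}} = \inner{\wvec{}}{\sysmat{}\im{}^*} = \inner{\sysmat{}_I^T\wvec{}}{\im{}^*_I} = \|\im{}^*\|_1$, so $\im{}^*$ is a minimizer. To get uniqueness I would split the gap $\|\yvec{}\|_1 - \inner{\sysmat{}^T\wvec{}}{\yvec{}}$ coordinatewise; on $I^c$ each term equals $(1 - |(\sysmat{}^T\wvec{})_j|)\,|\yvec{}_j|$ with a strictly positive factor by the bound $\|\sysmat{}_{I^c}^T\wvec{}\|_\infty < 1$. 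Hence equality $\|\yvec{}\|_1 = \|\im{}^*\|_1$ forces $\yvec{}_j = 0$ for $j \in I^c$, i.e. $\supp(\yvec{}) \subseteq I$; then $\yvec{} - \im{}^* \in \kernel(\sysmat{}_I) = \{\bm 0\}$, giving $\yvec{} = \im{}^*$.

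For necessity, assume $\im{}^*$ is the unique minimizer. The injectivity condition follows from perturbations: if some $\vvec{} \neq \bm 0$ lay in $\kernel(\sysmat{})$ with $\supp(\vvec{}) \subseteq I$, then for $|t|$ small the signs of $\im{}^* + t\vvec{}$ agree with those of $\im{}^*$, making $t \mapsto \|\im{}^* + t\vvec{}\|_1$ affine near $0$; an affine map is either non-constant (contradicting optimality for one sign of $t$) or constant (contradicting uniqueness), so $\kernel(\sysmat{}_I) = \{\bm 0\}$. For the certificate, I would encode uniqueness as strict positivity of the directional derivative: since the objective is convex and piecewise linear on the feasible affine set, uniqueness is equivalent to $\sum_{i \in I}\sign(\im{}^*_i)h_i + \sum_{i \in I^c}|h_i| > 0$ for every nonzero $h \in \kernel(\sysmat{})$. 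From this strict inequality I would produce $\wvec{}$ satisfying \eqref{eq:dual-cert} by linear-programming duality (equivalently, a separating-hyperplane / Farkas argument), with the strictness on $I^c$ coming precisely from the strictness of the directional-derivative condition.

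The main obstacle is this last step: ordinary optimality only yields a certificate with $\|\sysmat{}_{I^c}^T\wvec{}\|_\infty \le 1$, and upgrading the nonstrict bound to the strict bound $< 1$ is exactly where the uniqueness hypothesis must enter. Concretely, the hard part is showing that if every certificate saturated $|(\sysmat{}^T\wvec{})_j| = 1$ at some $j \in I^c$, then one could build a second minimizer supported on $I \cup \{j\}$, contradicting uniqueness; this strict-complementary-slackness phenomenon is what the duality argument must deliver, and some care is needed to ensure a single $\wvec{}$ can be chosen strict at \emph{all} indices of $I^c$ simultaneously.
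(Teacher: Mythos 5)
Your plan is essentially the paper's own argument: the paper states Theorem~\ref{thm:dual-cert} without proof (citing Plumbley) but proves the generalization Theorem~\ref{Sec:Th:Anal1Cond}, whose proof specializes to $\dmat{}$ equal to the identity and matches yours step for step --- the same certificate inequality chain plus injectivity of $\sysmat{}_I$ for sufficiency, and for necessity the same strict one-sided directional-derivative inequality over $\kernel(\sysmat{})$ (its inequality \eqref{Th:OptCond:StrictNSInEqu}) followed by a linear-programming-duality construction of the strict certificate. The simultaneous-strictness obstacle you flag at the end is resolved there exactly along the lines you predict: the paper minimizes $\max_{j\in I^c}|\bm\xi_j|$ over corrections $\bm\xi$ supported on $I^c$ that make the sign vector orthogonal to $\kernel(\sysmat{})$, and bounds the optimal value strictly below $1$ via the dual problem using \eqref{Th:OptCond:StrictNSInEqu} --- the max over $I^c$ in the objective handles all indices at once, so your fallback of extracting a second minimizer from a saturated certificate is not needed.
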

The theorem has two important consequences: First, the recoverability of a vector $\im{}^*$ only depends on its sign pattern, not the magnitude of its entries.
Second, to know if some vector $\im{}^*$ can be recovered from the measurement $\sysmat{}\im{}^*$ (for some given, fixed $\sysmat{}$), one only needs to check the existence of a vector $\wvec{}\in\RR^m$ such that the equality and the inequality from~\eqref{eq:dual-cert} are fulfilled. As we will show in Section~\ref{sec:study-design}, this can be done by solving an $m$-dimensional linear program.
Since the vector $\wvec{}$ is related to the dual optimization problem of~\eqref{eq:BP}, it will be called \emph{dual certificate} in the following.

A seemingly different necessary and sufficient condition for $\im{}^*$ being the unique \ellone{} solution given in~\cite{Grasmair2011} 
is the existence of a vector $\wvec{}$ such that $\sysmat{}_I^T \wvec{} = \sign(\im{}^*)_I$, $\|\sysmat{}_{I^c}^T \wvec{}\|_\infty \leq 1$ and that $\sysmat{}_J$ is injective where $J = \{j\in\{1,\dots,n\}\ |\ |(\sysmat{}^T\wvec{})_j|=1\}$. This condition may appear weaker but is in fact equivalent to Theorem \ref{thm:dual-cert} and we do not use it any further.

\subsection{Uniqueness conditions for \atv{}}\label{Sec:Anal1Cond}
We extend the previous result to the \atv{} case 
through a straightforward generalization of Theorem~\ref{thm:dual-cert}.
The following theorem basically adapts the result in \cite{haltmeier2013stable}.
\begin{theorem}\label{Sec:Th:Anal1Cond}
Let $\sysmat{}\in\RR^{m\times n}$ with $m<n, \dmat{}\in\RR^{n\times N}$ and $\im{}^*\in\RR^n$ with $I = \supp(\dmat{}^T\im{}^*)$.
Then it holds that $\im{}^*$ is the unique solution of
\begin{equation}\label{eq:l1analysis}
  \atv{}:\quad\quad\min_{\yvec{}}\|\dmat{}^T\yvec{}\|_1~\mbox{subject to}~\sysmat{}\yvec{}=\sysmat{}\im{}^*
\end{equation}
if and only if
\begin{align}
\kernel(\sysmat{})\cap\kernel(\dmat{}_{I^c}^T) = \{\bm 0\}\label{Th:OptCond:NullSpaceCond}
\end{align}
 and there exists $\wvec{}\in\RR^m$ and $\vvec{}\in\RR^N$ such that
\begin{align*}
\dmat{}\vvec{} =~\sysmat{}^T\wvec{},~~ \vvec{}_I =~\sign(\dmat{}_I^T\im{}^*),~~ \|\vvec{}_{I^c}\|_\infty <~1.
\end{align*}
\end{theorem}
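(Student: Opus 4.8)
The plan is to reduce the biconditional to a single directional-derivative condition and then treat the two implications with convex analysis and linear-programming duality, closely paralleling the proof of Theorem~\ref{thm:dual-cert}. Write $R(\yvec{})=\|\dmat{}^T\yvec{}\|_1$; this is convex and piecewise linear, and the feasible set $\{\yvec{}:\sysmat{}\yvec{}=\sysmat{}\im{}^*\}$ is affine with tangent space $\kernel(\sysmat{})$. Hence $\im{}^*$ is the \emph{unique} minimizer if and only if the one-sided directional derivative satisfies $R'(\im{}^*;\bm h)>0$ for every $\bm h\in\kernel(\sysmat{})\setminus\{\bm 0\}$: for a polyhedral convex $R$ a vanishing right derivative along $\bm h$ forces $R$ to be constant on a segment emanating from $\im{}^*$, i.e. a second minimizer. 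Splitting coordinates according to $I$ and $I^c$ and using $(\dmat{}^T\im{}^*)_{I^c}=\bm 0$ gives the explicit formula $R'(\im{}^*;\bm h)=\phi(\bm h):=\inner{\dmat{}_I\bm s}{\bm h}+\|\dmat{}_{I^c}^T\bm h\|_1$ with $\bm s=\sign(\dmat{}_I^T\im{}^*)$. It then suffices to show that $\phi>0$ on $\kernel(\sysmat{})\setminus\{\bm 0\}$ is equivalent to~\eqref{Th:OptCond:NullSpaceCond} together with the certificate condition.

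For the direction from the two conditions to $\phi>0$ I would use the certificate directly. For $\bm h\in\kernel(\sysmat{})$ the identity $0=\inner{\sysmat{}^T\wvec{}}{\bm h}=\inner{\dmat{}\vvec{}}{\bm h}=\inner{\bm s}{\dmat{}_I^T\bm h}+\inner{\vvec{}_{I^c}}{\dmat{}_{I^c}^T\bm h}$ rewrites the linear term of $\phi$, and H\"older's inequality then yields $\phi(\bm h)\ge(1-\|\vvec{}_{I^c}\|_\infty)\|\dmat{}_{I^c}^T\bm h\|_1\ge 0$, with equality only if $\dmat{}_{I^c}^T\bm h=\bm 0$, i.e. $\bm h\in\kernel(\sysmat{})\cap\kernel(\dmat{}_{I^c}^T)=\{\bm 0\}$ by~\eqref{Th:OptCond:NullSpaceCond}. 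This is the verbatim analogue of the sufficiency half of Theorem~\ref{thm:dual-cert}.

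For the converse I would first extract~\eqref{Th:OptCond:NullSpaceCond}: a nonzero $\bm h\in\kernel(\sysmat{})\cap\kernel(\dmat{}_{I^c}^T)$ makes $\phi$ linear along $\bm h$, so $\phi(\bm h)$ and $\phi(-\bm h)$ cannot both be positive. To produce the certificate I would dualize: writing $\|\dmat{}_{I^c}^T\bm h\|_1=\sup_{\|\bm u\|_\infty\le 1}\inner{\dmat{}_{I^c}\bm u}{\bm h}$ and encoding $\bm h\in\kernel(\sysmat{})$ by a multiplier $\wvec{}$, the statement $\phi\ge 0$ on $\kernel(\sysmat{})$ becomes a minimax whose strong dual (polyhedral, hence no gap) is exactly the existence of $\wvec{}$ and $\bm u$ with $\|\bm u\|_\infty\le 1$ and $\dmat{}_I\bm s+\dmat{}_{I^c}\bm u=-\sysmat{}^T\wvec{}$; that is, setting $\vvec{}_I=\bm s$, $\vvec{}_{I^c}=\bm u$ and absorbing the sign of $\wvec{}$, a certificate $\vvec{}$ with $\dmat{}\vvec{}=\sysmat{}^T\wvec{}$ but only the \emph{non-strict} bound $\|\vvec{}_{I^c}\|_\infty\le 1$.

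The main obstacle is upgrading $\le 1$ to the required strict $<1$, and this is precisely where uniqueness, rather than mere optimality, must be used. Here I would exploit that $\phi>0$ on the compact unit sphere of $\kernel(\sysmat{})$ yields $\phi(\bm h)\ge\delta\|\bm h\|_2$ for some $\delta>0$; combined with the trivial bound $\|\dmat{}_{I^c}^T\bm h\|_1\le C\|\bm h\|_2$, this slack lets me replace the weight $1$ by $1-\epsilon$ for small $\epsilon>0$ while keeping $\inner{\dmat{}_I\bm s}{\bm h}+(1-\epsilon)\|\dmat{}_{I^c}^T\bm h\|_1\ge 0$ on $\kernel(\sysmat{})$. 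Re-running the same duality with this smaller weight produces a certificate with $\|\vvec{}_{I^c}\|_\infty\le 1-\epsilon<1$. The only points requiring care are the justification of the minimax interchange (finite-dimensional strong duality and attainment) and checking that $\delta$ and $C$ combine to keep $\epsilon$ admissible; both are routine once the directional-derivative reformulation is in hand.
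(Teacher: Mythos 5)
Your proof is correct, and much of it runs parallel to the paper's: your sufficiency step (eliminating the $I$-part of $\phi(\bm h)=\inner{\dmat{}_I\bm s}{\bm h}+\|\dmat{}_{I^c}^T\bm h\|_1$ via $\dmat{}\vvec{}=\sysmat{}^T\wvec{}$ and $\bm h\in\kernel(\sysmat{})$, then H\"older) is the same computation the paper performs directly on objective values, and the paper likewise extracts its strict null-space inequality \eqref{Th:OptCond:StrictNSInEqu} and condition \eqref{Th:OptCond:NullSpaceCond} from one-sided directional derivatives of the piecewise-linear objective. The genuine divergence is in how the strict bound $\|\vvec{}_{I^c}\|_\infty<1$ is manufactured. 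The paper does it in one shot: it fixes $\bm\eta$ with $\bm\eta_I=\sign(\dmat{}_I^T\im{}^*)$, normalizes a basis $\{\wvec{}^{(l)}\}$ of $\kernel(\sysmat{})$ so that $\bm\eta^T\dmat{}^T\wvec{}^{(l)}=1$, solves the Chebyshev correction problem $\min_{\bm\xi}\max_{j\in I^c}|\bm\xi_j|$ over $\bm\xi$ supported off $I$, and obtains strictness by evaluating \eqref{Th:OptCond:StrictNSInEqu} at the combination $\sum_l \bm q^*_l\wvec{}^{(l)}$ built from the dual LP optimizer. You instead first get the non-strict certificate $\|\vvec{}_{I^c}\|_\infty\le 1$ from a single Sion/LP minimax over the whole kernel, and then upgrade by compactness: $\delta=\min\{\phi(\bm h):\bm h\in\kernel(\sysmat{}),\ \|\bm h\|_2=1\}>0$ together with $\|\dmat{}_{I^c}^T\bm h\|_1\le C\|\bm h\|_2$ lets you deflate the weight to $1-\epsilon$ for $\epsilon\le\delta/C$ and re-dualize, yielding $\|\vvec{}_{I^c}\|_\infty\le 1-\epsilon$. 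Your two-step route is arguably more robust: it sidesteps the paper's basis normalization, which tacitly assumes all the inner products $\bm\eta^T\dmat{}^T\wvec{}^{(l)}$ are nonzero (and whose displayed primal constraint contains a typo, $\bm\eta$ where $\bm\xi$ is meant), and it quantifies the slack in the certificate; what it costs is the bookkeeping you flag yourself, all of which checks out: $\kernel(\sysmat{})\neq\{\bm 0\}$ since $m<n$, so $\delta$ is well defined and positive once $\phi>0$ on the kernel sphere; $C$ may be taken positive because in the necessity direction \eqref{Th:OptCond:NullSpaceCond} is established first, ruling out $\dmat{}_{I^c}=\bm 0$ when $m<n$; and the minimax interchange is finite-dimensional and bilinear with the $\bm u$-ball compact, attainment being automatic since the inner minimum over the subspace takes only the values $0$ and $-\infty$. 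Finally, your preliminary reduction --- uniqueness is equivalent to $\phi>0$ on $\kernel(\sysmat{})\setminus\{\bm 0\}$ because a polyhedral convex function attains its right derivative exactly on a small segment --- is sound, and makes explicit an equivalence the paper uses only implicitly in its necessity half.
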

\begin{Proof}
The proof is separated into two parts, each for one direction. First, it will be shown that $\im{}^*$ is the unique solution under the given conditions.
For each $\yvec{}\in\RR^n$ with $\yvec{}\neq \im{}^*$ it holds $\dmat{}_{I^c}^T\yvec{} = \dmat{}_{I^c}^T(\yvec{}-\im{}^*) \neq \bm 0$ since $\yvec{}-\im{}^*$ is a nontrivial null space element of $\sysmat{}$ and \eqref{Th:OptCond:NullSpaceCond} is provided.
Further, with $\bm s = \sign(\dmat{}^T\im{}^*)$, the remaining conditions imply
\begin{align*}
\|\dmat{}^T\im{}^*\|_1 & = \bm s^T\dmat{}^T\im{}^* = (\dmat{}\vvec{})^T\im{}^* = \wvec{}^T\sysmat{}\im{}^* = \wvec{}^T\sysmat{}\yvec{} = \vvec{}^T\dmat{}^T\yvec{}\\
&\le\underbrace{\|\vvec{}_I\|_\infty}_{=1}\|\dmat{}_I^T\yvec{}\|_1 + \underbrace{\|\vvec{}_{I^c}\|_\infty}_{<1}\|\dmat{}_{I^c}^T\yvec{}\|_1 <\|\dmat{}^T\yvec{}\|_1.
\end{align*}
As a result of \eqref{Th:OptCond:NullSpaceCond}, the latter inequality is truly strict, see above. This proves the conditions to be sufficient for \atv{} uniqueness.

Let us assume the vector $\im{}^*$ solves the considered optimization problem uniquely.
Since the optimization problem \eqref{eq:l1analysis} is piecewise linear, for all $\bm h\in\kernel(\sysmat{})\backslash\{\bm 0\}$ it holds that
\begin{align*}
0 < \lim_{t\rightarrow0, t>0}\frac1t\left[\|\dmat{}^T(\im{}^*+t \bm h)\|_1 -\|\dmat{}^T\im{}^*\|_1\right] = \sum_{i\in I}\sign(\bm d_i^T\im{}^*)\bm d_i^T\bm h + \sum_{i\in I^c}|\bm d_i^T\bm h|,
\end{align*}
and, since the latter inequality holds for all nontrivial null space elements, in particular $-\bm h$, it holds that
\begin{align}
\left|\sum_{i\in I}\sign(\bm d_i^T\im{}^*)\bm d_i^T\bm h\right| < \sum_{i\in I^c}|\bm d_i^T\bm h|.\label{Th:OptCond:StrictNSInEqu}
\end{align}
Moreover, it holds $\dmat{}_{I^c}^T\bm h\neq \bm 0$ which implies \eqref{Th:OptCond:NullSpaceCond}.

Finally the rest of the conditions will be proved.
Choose $\bm \eta\in\RR^N$ with $\bm \eta_i = \sign(\bm d_i\im{}^*)$ for $i\in I$ and $\bm \eta_j = 0$ for $j\notin I$ and assume $\dmat{}^T\bm \eta$ is not an element of the range of $\sysmat{}^T$ -- otherwise the proof would be finished.
Let $\kernel\sysmat{}$ be $p$-dimensional and $\{\wvec{}^{(l)}\}_{1\le l\le p}$ be a basis  of $\kernel\sysmat{}$ with $1 = \bm \eta^T\dmat{}^T\wvec{}^{(l)}$; it holds
\begin{align*}
1 = \bm \eta^T\dmat{}^T\wvec{}^{(l)} = \sum_{i\in I}\sign(\bm d_i^T\im{}^*)\bm d_i^T\wvec{}^{(l)}~\mbox{for all}~1\le l\le p.
\end{align*}
In the following, a vector $\tilde{\bm\xi}\in\RR^N$ will be constructed such that $\tilde{\bm\xi}^T\dmat{}^T\wvec{}^{(l)} = -1$ holds for all $1\le l\le p$; hence, the vector $\dmat{}^T(\bm \eta + \tilde{\bm\xi})$ is an element of the range of $\sysmat{}^T$ due to its orthogonality to the null space of $\sysmat{}$.
Consider a solution $\tilde{\bm\xi}\in\RR^N$ of the problem
\begin{align*}
\min_{\bm\xi\in\RR^N}\max_{j\in I^c}|\bm\xi_j|~\mbox{subject to}~\bm\eta^T\dmat{}^T\wvec{}^{(l)} = -1~\mbox{for all}~1\le l\le p
\end{align*}
and $\bm q^*\in\RR^p$ as a solution of its dual problem
\begin{align*}
\min_{\bm q\in\RR^p}-\sum_{i=1}^p \bm q_i~\mbox{ subject to}~\sum_{j\in I^c}\left|\sum_{l=1}^p \bm q_l \bm d_j^T\wvec{}^{(l)}\right|\le1,
\end{align*}
then for all $1\le l\le p$ it holds
\begin{align*}
\max_{j\in I^c}|\tilde{\bm\xi}_j| = \left|\sum_{i=1}^p \bm q^*_i\right| =  \left|\sum_{i\in I}\sign(\bm d_i^T\im{}^*)\bm d_i^T\sum_{i=1}^p \bm q^*_i\wvec{}^{(l)}\right| < \sum_{j\in I^c}\left|\sum_{l=1}^p \bm q^*_l \bm d_j^T\wvec{}^{(l)}\right|\le1
\end{align*}
since $\sum_{i=1}^p \bm q^*_i\wvec{}^{(l)}$ satisfies \eqref{Th:OptCond:StrictNSInEqu}.
This proves the remaining conditions as necessary.
\end{Proof}

We also call the pair $(\vvec{},\wvec{})$ of vectors a \emph{dual certificate} for \atv{}. 
Note that adding a kernel element of $\dmat{}^T$ does not affect the property of being uniquely recoverable by \atv{}:

\begin{corollary} \label{cor:nullvector}
  In the setting of Theorem~\ref{Sec:Th:Anal1Cond}, let $\bm h\in\ker(\dmat{}^T)$ and let $\im{}^*$ be the unique solution of~\eqref{eq:l1analysis}.
  Then $\tilde \im{}=\im{}^*+\bm h$ is also a unique solution of~\eqref{eq:l1analysis}, with $\im{}^*$ replaced by $\tilde \im{}$.
\end{corollary}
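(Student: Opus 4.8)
The plan is to apply Theorem~\ref{Sec:Th:Anal1Cond} directly to $\tilde \im{}$, exploiting that $\dmat{}^T$ annihilates $\bm h$ so that all data entering the uniqueness conditions are left unchanged. First I would record the key identity: since $\bm h\in\kernel(\dmat{}^T)$, we have $\dmat{}^T\tilde\im{} = \dmat{}^T(\im{}^*+\bm h) = \dmat{}^T\im{}^*$. Consequently the support $\tilde I = \supp(\dmat{}^T\tilde\im{})$ coincides with $I = \supp(\dmat{}^T\im{}^*)$, and the corresponding sign pattern is preserved as well, i.e. $\sign(\dmat{}_{\tilde I}^T\tilde\im{}) = \sign(\dmat{}_I^T\im{}^*)$.

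Next I would verify that both conditions of Theorem~\ref{Sec:Th:Anal1Cond} hold for $\tilde\im{}$. The null-space condition~\eqref{Th:OptCond:NullSpaceCond}, namely $\kernel(\sysmat{})\cap\kernel(\dmat{}_{I^c}^T)=\{\bm 0\}$, involves only $\sysmat{}$, $\dmat{}$, and the support $I$; as $\tilde I = I$, it is literally the same condition already known to hold for $\im{}^*$. The dual-certificate condition asks for a pair $(\vvec{},\wvec{})$ with $\dmat{}\vvec{}=\sysmat{}^T\wvec{}$, $\vvec{}_I=\sign(\dmat{}_I^T\tilde\im{})$, and $\|\vvec{}_{I^c}\|_\infty<1$; since both the support $I$ and the sign vector are unchanged, the very same certificate witnessing uniqueness of $\im{}^*$ serves unchanged as a certificate for $\tilde\im{}$. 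Note that the equality constraint $\sysmat{}\yvec{}=\sysmat{}\tilde\im{}$ merely relocates the feasible affine space, which is immaterial because the theorem's criteria are phrased in terms of $I$ and the sign pattern only.

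Having checked both conditions, the theorem immediately yields that $\tilde\im{}$ is the unique solution of~\eqref{eq:l1analysis} with $\im{}^*$ replaced by $\tilde\im{}$. I do not expect any genuine obstacle here: the entire argument reduces to observing that the support and sign data feeding the uniqueness test are invariant under adding a kernel element of $\dmat{}^T$. The one point deserving explicit mention — so the reader sees why the statement is not vacuous — is precisely this invariance, which is exactly what membership of $\bm h$ in $\kernel(\dmat{}^T)$ guarantees and which would fail for a generic perturbation not lying in that kernel.
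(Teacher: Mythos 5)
Your proof is correct and takes essentially the same route as the paper's one-line argument: since $\bm h\in\kernel(\dmat{}^T)$ leaves $\dmat{}^T\im{}^*$, and hence the support $I$ and sign pattern, unchanged, the dual certificate $(\vvec{},\wvec{})$ for $\im{}^*$ transfers verbatim to $\tilde\im{}$, and the ``if'' direction of Theorem~\ref{Sec:Th:Anal1Cond} concludes. Your explicit verification of the null-space condition~\eqref{Th:OptCond:NullSpaceCond}, which depends only on $\sysmat{}$, $\dmat{}$, and the unchanged index set $I$, merely spells out a step the paper leaves implicit.
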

\begin{Proof}
  Just note that $\supp(\dmat{}^T\im{}^*) = \supp(\dmat{}^T\tilde \im{})$ and that if $(\vvec{},\wvec{})$ is a dual certificate for $\im{}^*$ it is also a dual certificate for $\tilde \im{}$.
\end{Proof}

\subsection{Uniqueness conditions for \itv{}}

The following theorem gives sufficient conditions on $\sysmat{}$, $\dlinop$ and $\im{}^*$ such that $\im{}^*$ is the unique \itv{} solution; as far as we know, it is unknown if they are also necessary.

\begin{theorem} \label{thm:isotv}
  Let $\sysmat{}\in\RR^{m\times n}$, $m<n$, $\dlinop:\RR^{r\times p} \to \RR^n$ and
  $\im{}^*\in\RR^{n}$ and denote by $I = \{i\in\{1,\dots r\} :\ (\dlinop^T\im{}^*)_i\neq \bm 0\}$.  Then it holds that $\im{}^*$ is the unique solution
  of
  \begin{equation}
  \label{eq:isotv}
  \itv{}:\quad\quad\min_{\yvec{}} \|\dlinop^T\yvec{}\|_{1,2}\ \text{subject to}\ \sysmat{}\yvec{} = \sysmat{}\im{}^*
\end{equation}
if there exists $\wvec{}\in\RR^m$ and $\bm Y\in\RR^{r\times p}$ such that $\dlinop \bm Y = \sysmat{}^T \wvec{}$ and
  \begin{enumerate}
  \item There exists $\bm Y\in \RR^{r\times p}$ such that 
    \begin{equation*}
      \bm Y_i = \frac{(\dlinop^T \im{}^*)_i}{\|(\dlinop^T \im{}^*)_i\|_2} \ \text{ for }\ i\in I,\qquad
      \|\bm Y_i\|_2< 1\ \text{ for }\ i\notin I
    \end{equation*}
    and
  \item with $S = \{\vvec{}\in \RR^n\ :\
    (\dlinop^T \vvec{})_i = \bm 0\ \text{for}\ i\notin I\}$ it holds that $\sysmat{}\vvec{} = \sysmat{}\yvec{}$
    implies $\yvec{}=\vvec{}$ for all $\yvec{},\vvec{}\in S$.
  \end{enumerate}
\end{theorem}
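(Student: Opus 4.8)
The plan is to mimic the sufficiency direction of Theorem~\ref{Sec:Th:Anal1Cond}, replacing the $\ell^1$ duality pairing with the Frobenius inner product on $\RR^{r\times p}$ and Cauchy--Schwarz in place of the H\"older bound. Throughout I write $\inner{\bm Y}{\bm Z} = \sum_i \inner{\bm Y_i}{\bm Z_i}$ for the Frobenius pairing and abbreviate $\bm Z^* = \dlinop^T\im{}^*$ and $\bm Z = \dlinop^T\yvec{}$. Let $\yvec{}$ be any feasible point with $\yvec{}\neq\im{}^*$; then $\bm h = \yvec{}-\im{}^*$ is a nonzero element of $\kernel(\sysmat{})$. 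The goal is to show $\|\bm Z^*\|_{1,2} < \|\bm Z\|_{1,2}$, which establishes that $\im{}^*$ is the unique minimizer.

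First I would record the central pairing identity. Using feasibility $\sysmat{}\im{}^*=\sysmat{}\yvec{}$ together with $\dlinop\bm Y=\sysmat{}^T\wvec{}$ and the adjoint relation $\inner{\bm Y}{\dlinop^T\vvec{}} = \inner{\dlinop\bm Y}{\vvec{}}$, one computes
\begin{align*}
\inner{\bm Y}{\bm Z^*} = \inner{\dlinop\bm Y}{\im{}^*} = \inner{\wvec{}}{\sysmat{}\im{}^*} = \inner{\wvec{}}{\sysmat{}\yvec{}} = \inner{\dlinop\bm Y}{\yvec{}} = \inner{\bm Y}{\bm Z}.
\end{align*}
Next I would evaluate the two sides separately. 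On the left, the rows with $i\notin I$ contribute nothing because $\bm Z^*_i = \bm 0$ there, while for $i\in I$ the prescribed form $\bm Y_i = \bm Z^*_i/\|\bm Z^*_i\|_2$ gives $\inner{\bm Y_i}{\bm Z^*_i} = \|\bm Z^*_i\|_2$; hence $\inner{\bm Y}{\bm Z^*} = \|\bm Z^*\|_{1,2}$. On the right, Cauchy--Schwarz yields $\inner{\bm Y_i}{\bm Z_i}\le\|\bm Y_i\|_2\|\bm Z_i\|_2$ in each row, and since $\|\bm Y_i\|_2 = 1$ for $i\in I$ and $\|\bm Y_i\|_2 < 1$ for $i\notin I$, summing gives $\inner{\bm Y}{\bm Z}\le\|\bm Z\|_{1,2}$. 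Chaining these produces $\|\bm Z^*\|_{1,2}\le\|\bm Z\|_{1,2}$, i.e.\ optimality of $\im{}^*$.

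The remaining and only delicate step is upgrading this to a strict inequality, which is exactly where condition~2 enters. I would first observe that condition~2 is equivalent to $\kernel(\sysmat{})\cap S = \{\bm 0\}$, because $S$ is a linear subspace. Since $\bm h\in\kernel(\sysmat{})\setminus\{\bm 0\}$, it follows that $\bm h\notin S$, so $(\dlinop^T\bm h)_i\neq\bm 0$ for some $i\notin I$; as $\bm Z^*_i = \bm 0$ for $i\notin I$, this means $\bm Z_i = (\dlinop^T\bm h)_i\neq\bm 0$ for that index. For this single row the Cauchy--Schwarz bound is strict, $\inner{\bm Y_i}{\bm Z_i}\le\|\bm Y_i\|_2\|\bm Z_i\|_2<\|\bm Z_i\|_2$, because $\|\bm Y_i\|_2<1$ and $\|\bm Z_i\|_2>0$. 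Propagating this strict term through the summation turns $\inner{\bm Y}{\bm Z}\le\|\bm Z\|_{1,2}$ into a strict inequality, and combined with the pairing identity yields $\|\bm Z^*\|_{1,2}<\|\bm Z\|_{1,2}$ as required.

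I expect the main conceptual obstacle to be correctly locating where strictness is forced: the pairing identity alone only certifies optimality, and one must exploit both the strict inequality $\|\bm Y_i\|_2<1$ off the support and the restricted-injectivity condition~2 to guarantee that some off-support row of $\bm Z$ is genuinely nonzero. Recasting condition~2 as the intersection condition $\kernel(\sysmat{})\cap S=\{\bm 0\}$ makes this transparent and mirrors the role played by \eqref{Th:OptCond:NullSpaceCond} in the \atv{} proof. Since only sufficiency is claimed, no converse construction (such as the dual program built in Theorem~\ref{Sec:Th:Anal1Cond}) is needed.
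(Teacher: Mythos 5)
Your proposal is correct and follows essentially the same route as the paper's proof: the Frobenius pairing identity $\|\dlinop^T\im{}^*\|_{1,2} = \inner{\bm Y}{\dlinop^T\im{}^*} = \inner{\wvec{}}{\sysmat{}\im{}^*} = \inner{\bm Y}{\dlinop^T\yvec{}}$, row-wise Cauchy--Schwarz with $\|\bm Y_i\|_2 = 1$ on $I$ and $\|\bm Y_i\|_2 < 1$ off $I$, and strictness forced by condition~2 guaranteeing some off-support row $(\dlinop^T\yvec{})_{i_0}\neq\bm 0$. Your recasting of condition~2 as $\kernel(\sysmat{})\cap S = \{\bm 0\}$ is merely a transparent restatement of the paper's observation that any feasible $\vvec{}\in S$ must equal $\im{}^*$, so the two arguments coincide.
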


\begin{Proof}
  On $\RR^{r\times p}$ we have the usual inner product $\inner{\bm Y}{\bm Z} =
  \sum_{i=1}^r\sum_{j=1}^p \bm Y_{ij}\bm Z_{ij}$. Then it holds for the $\bm Y$
  defined above that
  \[
  \|\dlinop^T\yvec{}\|_{1,2} = \inner{\bm Y}{\dlinop^T\yvec{}} = \inner{\dlinop \bm Y}{\yvec{}} = \inner{\sysmat{}^T \wvec{}}{\yvec{}}
  = \inner{\wvec{}}{\sysmat{}\yvec{}}.
  \]
  Also note that obviously, $\yvec{}\in S$.
  
  Now consider a vector $\vvec{}\in\RR^n$ that is feasible
  for~(\ref{eq:isotv}), i.e. it holds that $\sysmat{}\vvec{} = \sysmat{}\im{}^*$. If $\vvec{}$ would
  be an element of $S$, then, by assumption, $\vvec{} = \im{}^*$. Hence, if
  $\vvec{}\neq \yvec{}$, then $\vvec{}\notin S$, i.e. there is at least one index
  $i_0\notin I$ such that $(\dlinop^T\vvec{})_{i_0} \neq \bm 0$. Consequently we get
  \begin{align*}
    \|\dlinop^T\im{}^*\|_{1,2} & = \inner{\wvec{}}{\sysmat{}\yvec{}} = \inner{\wvec{}}{\sysmat{}\vvec{}} = \inner{\sysmat{}^T \wvec{}}{\vvec{}} = \inner{\dlinop \bm Y}{\vvec{}}\\
    & = \sum_{i=1}^r \bm Y_i\cdot (\dlinop^T\vvec{})_i \qquad\text{[`$\cdot$' denotes the inner product in $\RR^d$]}\\
    & \leq \sum_{i=1}^r\|\bm Y_i\|_2\|(\dlinop^T\vvec{})_i\|_2 
     = \underbrace{\|\bm Y_{i_0}\|_2}_{<1}\underbrace{\|(\dlinop^T\vvec{})_{i_0}\|_2}_{>0} + \sum_{i\neq i_0}\underbrace{\|\bm Y_i\|_2}_{\leq 1}\|(\dlinop^T\vvec{})_i\|_2\\
    & < \sum_{i=1}^r \|(\dlinop^T\vvec{})_i\|_2 = \|\dlinop^T\vvec{}\|_{1,2}.
  \end{align*}
  In other words, every feasible $\vvec{}$ different from $\im{}^*$ has a larger
  objective value.
\end{Proof}

Whereas geometrical interpretations of the conditions in Theorem \ref{Sec:Th:Anal1Cond} for \atv{} and Theorem \ref{thm:isotv} for \itv{} are difficult to establish and may require a high level of comprehension, the \ellone{} conditions in Theorem \ref{thm:dual-cert} can be explained easily. For, say, a full rank matrix $A\in\mathbb{R}^{m\times n}$ with $m < n$ and $x^*\in\mathbb{R}^n$ with $I = \mathrm{supp}(x^*)$ and $|I| = k \le m$, the conditions \eqref{eq:dual-cert} can be seen as the intersection of the affine space $\mathrm{rg}(A^T)$ with the $n$-dimensional hypercube $[-1,+1]^n$.
Indeed, the affine space cuts the interior of an $(n-k)$-dimensional face of the hypercube; which face is sliced depends on the sign pattern of $x^*$.

\section{Numerical implementation of reconstruction and uniqueness tests} \label{sec:opti}

\subsection{Reconstruction problems}
The three regularized reconstruction problems \ellone{}, \atv{} and \itv{}
are solved numerically by a primal-dual interior-point method using MOSEK \cite{MOSEK}. Our motivation for this choice of method is to ensure that the optimization problem is solved accurately. MOSEK achieves this by producing a certificate of optimality for the returned numerical solution, i.e., a primal-dual solution pair with duality  gap numerically close to zero.

Across the large number of optimizations done for the present study, it is our experience that other methods such as accelerated first-order methods~\cite{tseng1991applications,NesterovBook2004} and primal-dual methods~\cite{ChambollePock:2011,esser2010primaldual} are less reliable in actually arriving at a solution of the equality-constrained problem accurate enough to reliably assess whether it is equal to the original image. 

We solve \ellone{} as a linear program (LP) by introducing $\bm q \in \RR^n$ to bound $\im{}$:
\begin{equation}
 \min_{\im{},\bm q}\; \mathbf{1}^T \bm q \qquad \text{subject to} \qquad \sysmat{}\im{} = \sino{} \quad \text{and} \quad -\bm q \leq \im{} \leq \bm q .
\end{equation}
In a similar fashion, \atv{} can be solved as an LP. By defining $\bm z = \dmat{}^T\im{}$ and using $\bm q$ for bounding $\bm z$ we can solve the problem as
\begin{equation}
 \min_{\im{},\bm z,\bm q}\; \mathbf{1}^T \bm q \qquad \text{subject to} \qquad \sysmat{}\im{} = \sino{} \quad \text{and} \quad \bm z = \dmat{}^T\im{} \quad \text{and} \quad -\bm q \leq \bm z \leq \bm q.
\end{equation}
\itv{} can be recast as a conic optimization problem, which can also be solved by MOSEK. Again, we introduce the bounding vector $\bm q \in \RR^n$ and can solve the problem as
\begin{equation}
 \min_{\im{},\bm q}\; \mathbf{1}^T \bm q \qquad \text{subject to} \qquad \sysmat{}\im{} = \sino{} \quad \text{and} \quad \|(\dlinop^T \im{})_j\|_2 \leq \bm q_j \quad \text{for} \quad j = 1,\dots,n,
\end{equation}
in which each of the $n$ inequalities specify a quadratic conic constraint.

\subsection{Uniqueness tests}

As stated by Theorem \ref{thm:dual-cert}, we can show that $\im{}^*$ is the unique \ellone{} minimizer if and only if $\sysmat{}_I$ is injective and there exists a $\wvec{}\in \RR^m$ such that $\sysmat{}_I^T \wvec{} = \sign(\im{}^*)_I$ and $\|\sysmat{}_{I^c}^T \wvec{} \|_\infty < 1$. Injectivity is tested by evaluating whether $\sysmat{}_I$ has full column rank. The second condition can be tested by minimizing $\|\sysmat{}_{I^c}^T \wvec{}\|_\infty$ with respect to $\wvec{}$ while respecting the equality constraint $\sysmat{}_I^T \wvec{} = \sign(\im{}^*)_I$. By splitting the infinity-norm into a two-sided inequality constraint, this problem can be solved as an LP,
\begin{align}
 \min_{t,\wvec{}} \quad& t \\
\text{subject to}\quad & -t \mathbf{1} \leq \sysmat{}_{I^c}^T\wvec{} \leq t\mathbf{1}\\
&\sysmat{}_I^T \wvec{} = \sign(\im{}^*)_I,
\end{align}
which we accomplish by use of MOSEK's primal-dual interior-point method.
For the optimal solution $(t^\star,\wvec{}^\star)$, by definition we have the smallest possible $t = \|\sysmat{}_{I^c}^T\wvec{}\|_\infty$. If $t^\star$ is not smaller than one, then no $\wvec{}$ exists with smaller $t$. We therefore declare $\im{}^*$ the unique minimizer if $t<1$, and if $t\geq 1$, $\im{}^*$ cannot be the unique minimizer. Numerically, we test whether $t^\star < 1 - \epsilon$ for $\epsilon = 10^{-5}$ to ensure that the inequality is satisfied strictly. Technically, by doing this we risk falsely rejecting solution uniqueness of some $\im{}^*$, namely the ones for which $1-\epsilon < t^\star<1$. However, we found the decision on uniqueness to be robust to other choices of $\epsilon$, so we believe this is not a problem in practice.

Regarding \atv{}, we can show solution uniqueness of a vector $\im{}^*$ by use of Theorem \ref{Sec:Th:Anal1Cond}.
The condition of zero-intersection of $\ker(\sysmat{})$ and $\ker(\dmat{}_{I^c}^T)$ 
can be checked numerically by evaluating whether the matrix $(\sysmat{};\dmat{}_{I^c}^T)$ has full rank, where semicolon means vertical concatenation. Similar to the \ellone{} case, the second condition can be tested by solving in MOSEK the LP,
\begin{align}
 \min_{t,\vvec{},\wvec{}} \quad& t \\
\text{subject to}\quad & -t \mathbf{1} \leq \vvec{}_{I^c} \leq t\mathbf{1}\\
&\sysmat{}_I^T \wvec{} = \dmat{}_I\vvec{}_I + \dmat{}_{I^c}\vvec{}_{I^c} \\
&\vvec{}_I = \sign(\dmat{}_I^T\im{}^*),
\end{align}
and assessing whether the optimal $t^\star$ is smaller than $1 - \epsilon$.

For isotropic TV we can show solution uniqueness by use of Theorem \ref{thm:isotv}. We let $\bm Y_I$ and $\bm Y_{I^c}$ denote $\bm Y$ restricted to rows $I$ and $I^c$, and similarly for $\dlinop_I$ and $\dlinop_{I^c}$. Given $\im{}^*$ we construct a $\bm Y$ and a $\wvec{}$ satisfying the requirements 
by solving the conic program
\begin{align}
 \min_{t,\bm Y,\wvec{}} \quad& t \\
 \text{subject to}\quad &\dmat{}_{I^c}\bm Y_{I^c} -\sysmat{}^T\wvec{} = -\dmat{}_I \bm Y_I \\
 & \bm Y_i = \frac{(\dlinop^T \im{}^*)_i}{\|(\dlinop^T \im{}^*)_i\|_2} \quad \text{for} \quad i \in I\\
 &\|\bm Y_i\|_2 \leq t  \quad \text{for} \quad  i \in I^c.
\end{align}
If the optimal value $t^\star$ is greater than $1 - \epsilon$, then part (1) of Theorem \ref{thm:isotv} is not fulfilled and we cannot show uniqueness. If instead $t^\star < 1-\epsilon$ we proceed to part (2). 

The set $S$ can be equivalently described as the 
kernel of $\dmat{}_{I^c}^T$. Letting $\textrm{\bf K}$ denote a matrix of basis vectors of the kernel, any $\vvec{} \in S$ can be represented using a coefficient vector $\bm c$ such that $\vvec{} = \textrm{\bf K}\bm c$. We numerically test the injectivity requirement of $\sysmat{}$ on $S$ by evaluating whether $\sysmat{}\textrm{\bf K}$ has full column rank. If true, we have shown solution uniqueness.

\section{Generation of sparse test images}\label{sec:study-design}

\subsection{Images for \ellone{}}
\label{sec:spikes}

The \spikes{} class will be used to test for exact recovery of sparse
images from tomographic measurements by \ellone{} (\ref{eq:ell1}). Since we are interested in recovery in dependence on
the sparsity, we follow the usual approach~\cite{blanchard2011compressed} and build test
images consisting of a given number of spikes at random positions and
with entries sampled from a uniform distribution on $[0,1]$. The \spikes{} images hence are non-negative. 
We also consider a signed version, \signedspikes{}, with the only differences that entries are sampled from the uniform distribution on $[-1,1]$. 
Fig. \ref{fig:spikesexamples} shows example images of each class at a range of relative sparsity values.

\begin{figure}[tb]
\newcommand{\ww}{0.19}
\includegraphics[width=\ww\linewidth]{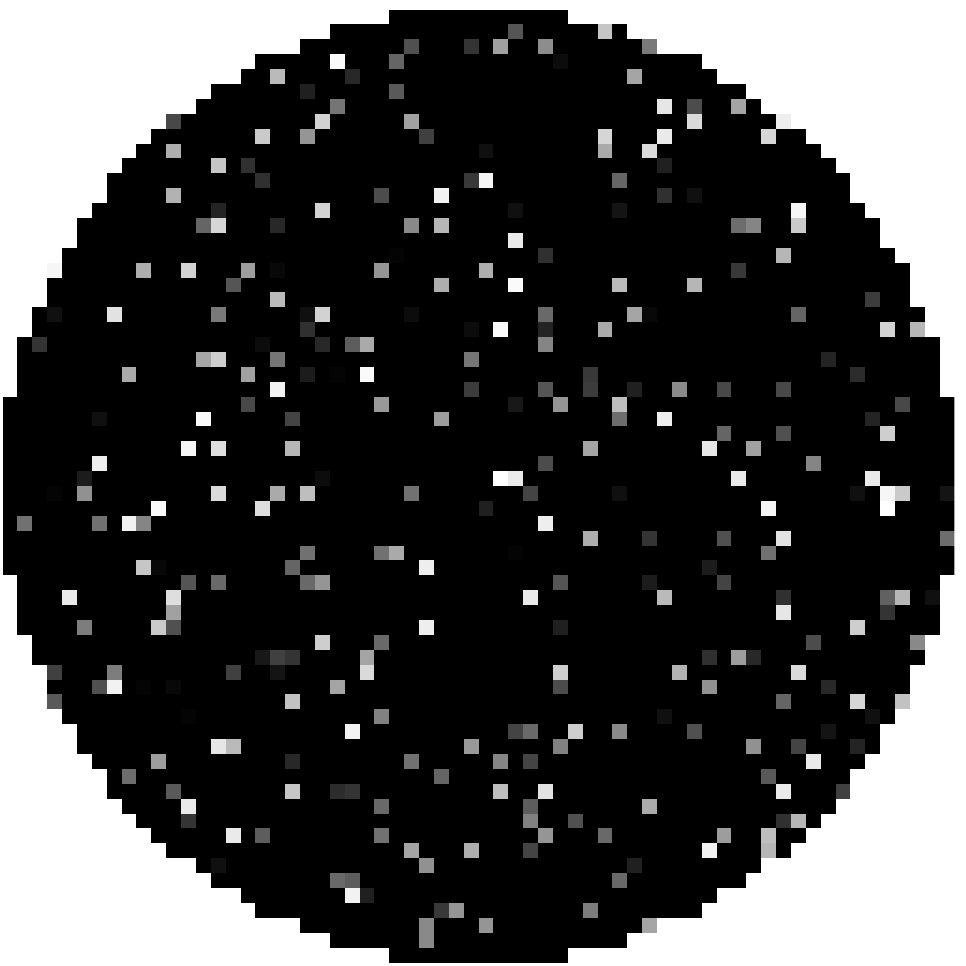}
\includegraphics[width=\ww\linewidth]{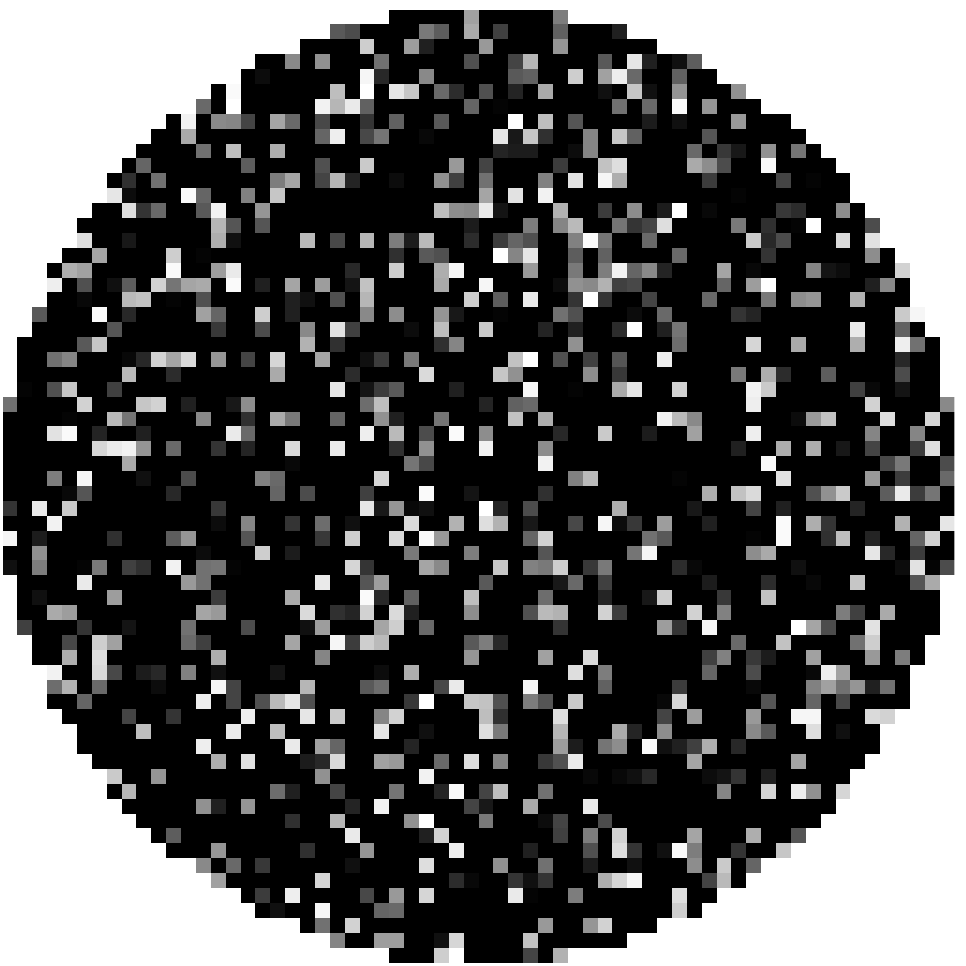}
\includegraphics[width=\ww\linewidth]{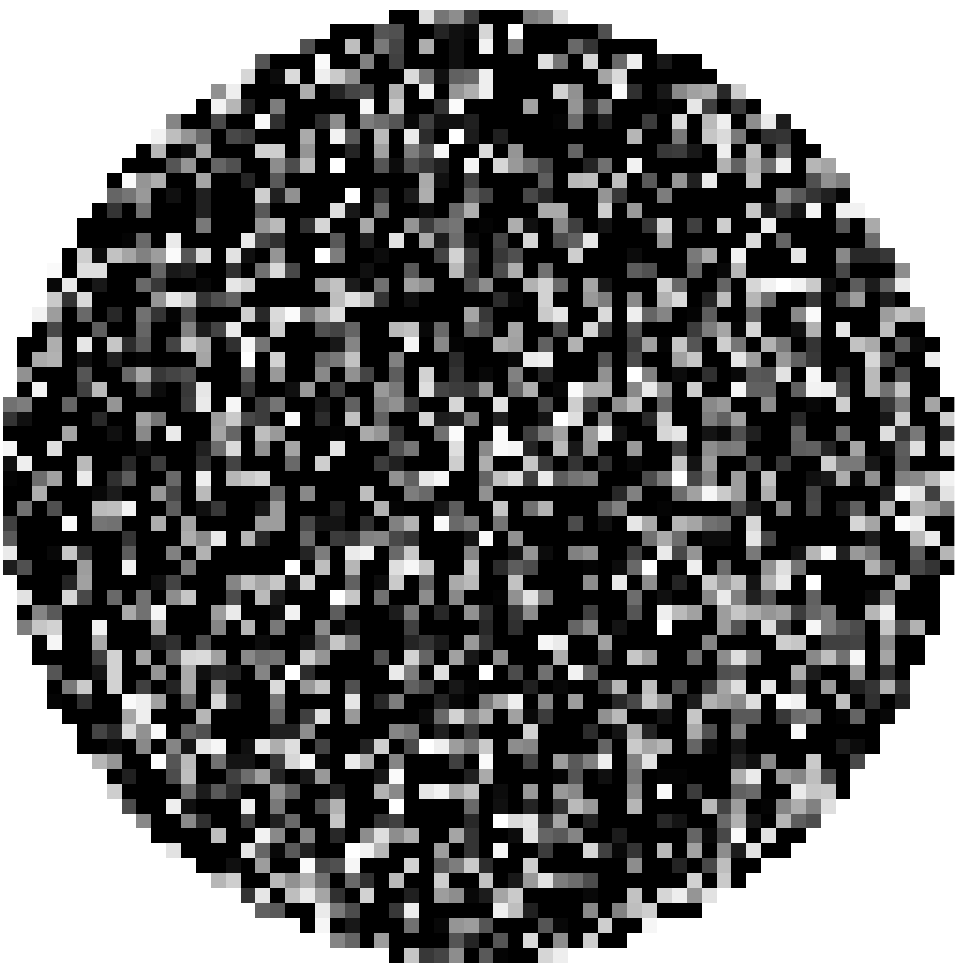}
\includegraphics[width=\ww\linewidth]{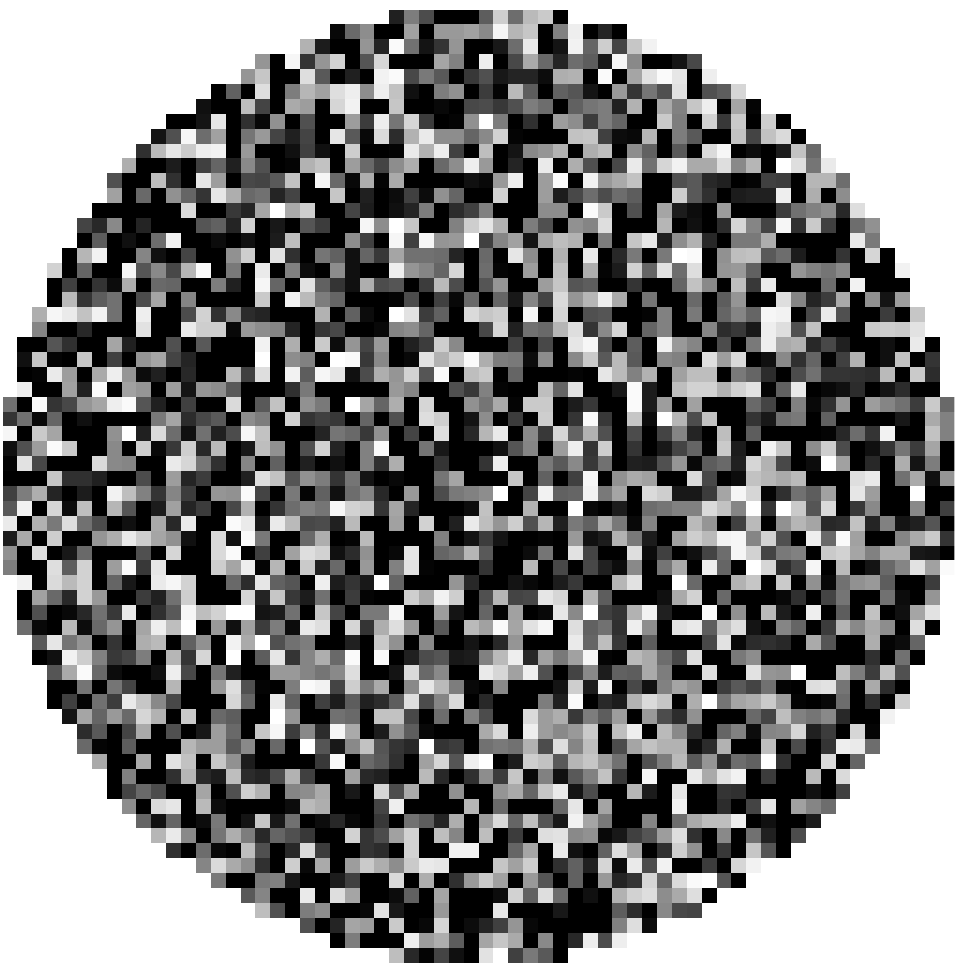}
\includegraphics[width=\ww\linewidth]{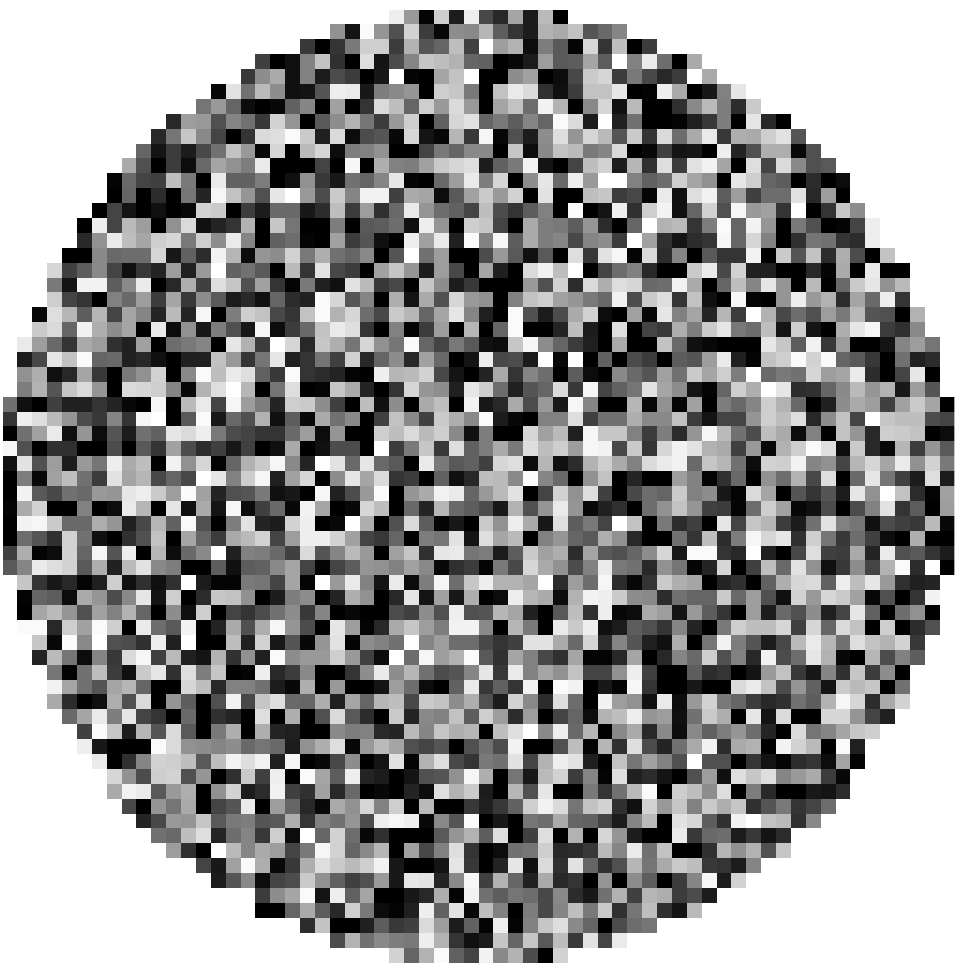}\\
\includegraphics[width=\ww\linewidth]{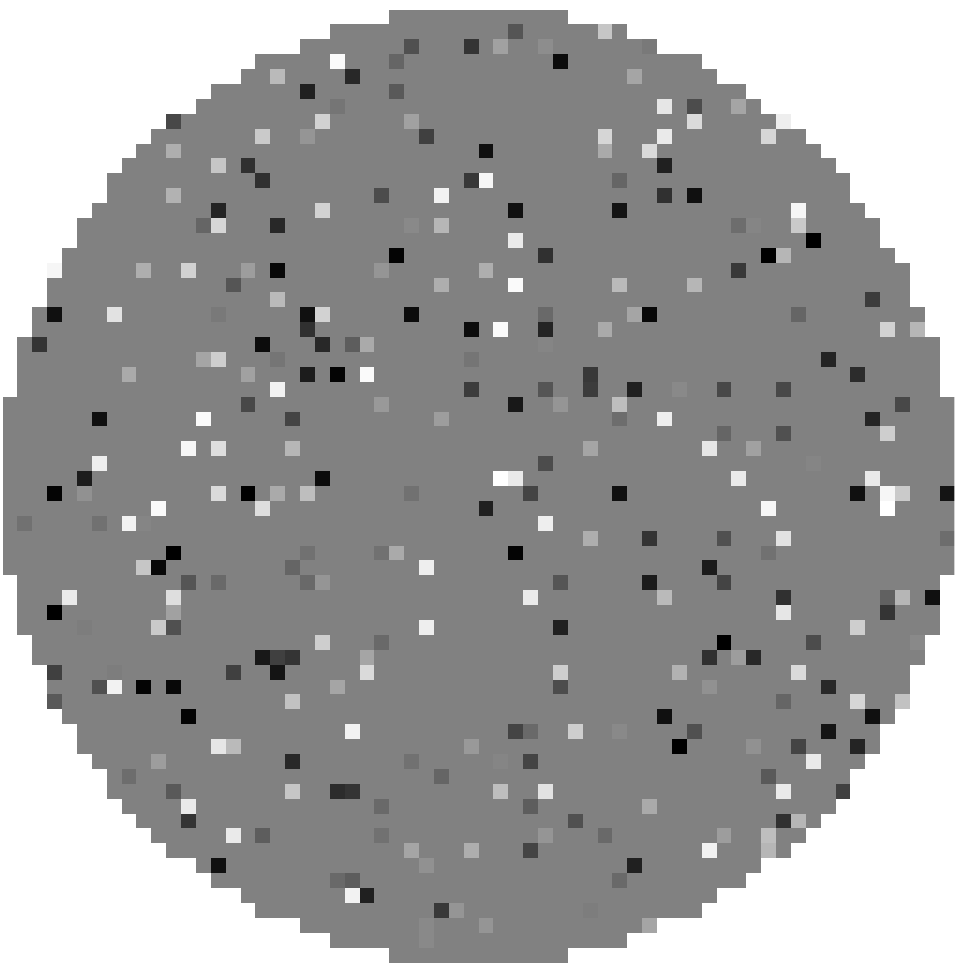}
\includegraphics[width=\ww\linewidth]{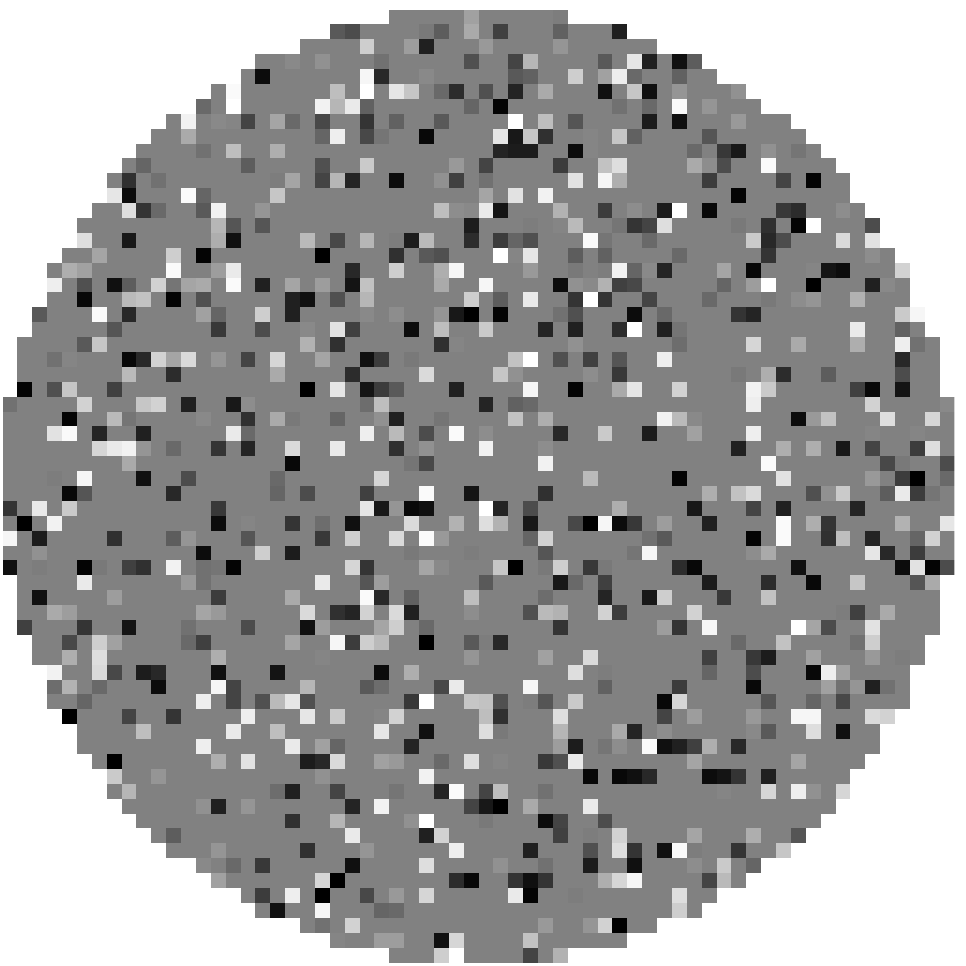}
\includegraphics[width=\ww\linewidth]{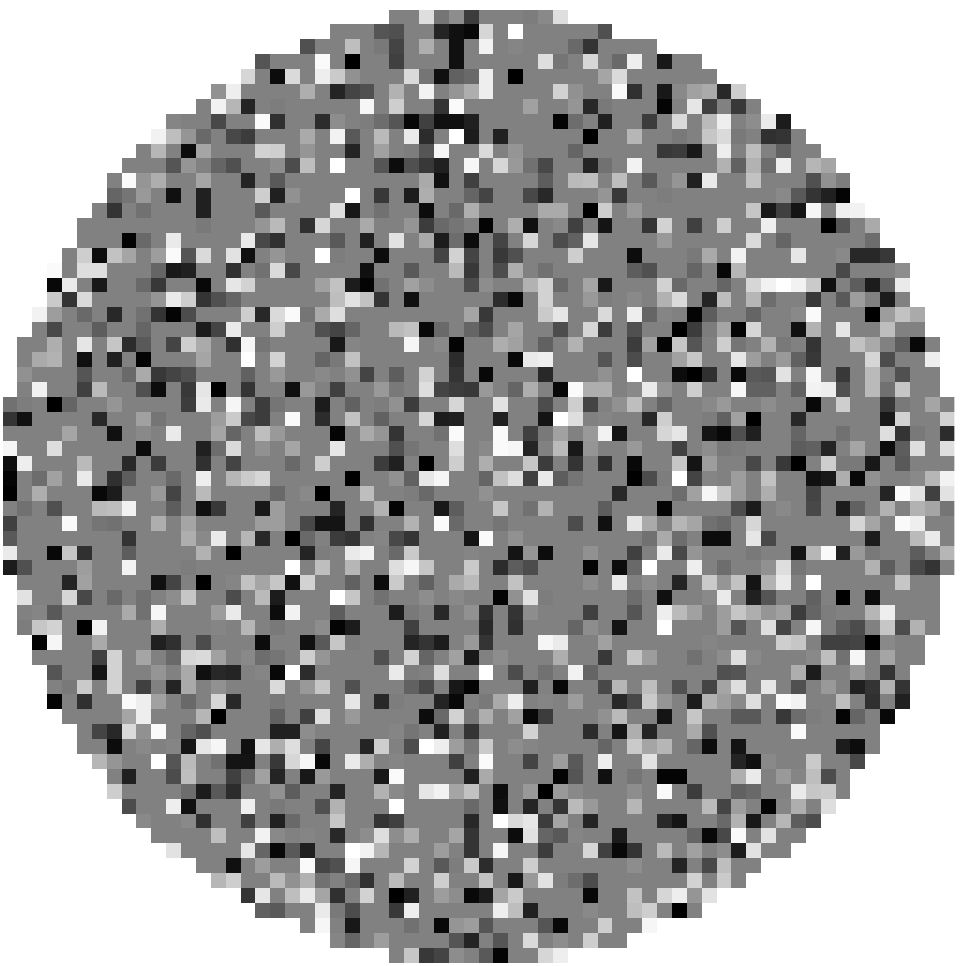}
\includegraphics[width=\ww\linewidth]{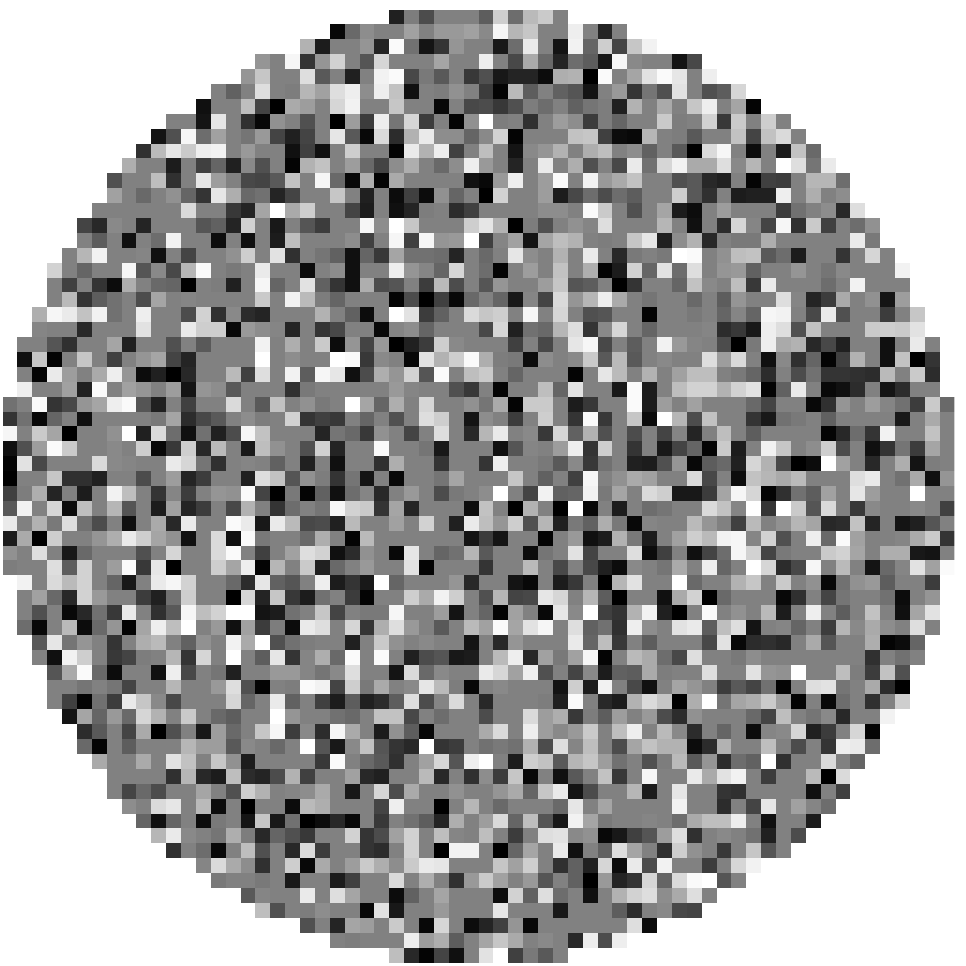}
\includegraphics[width=\ww\linewidth]{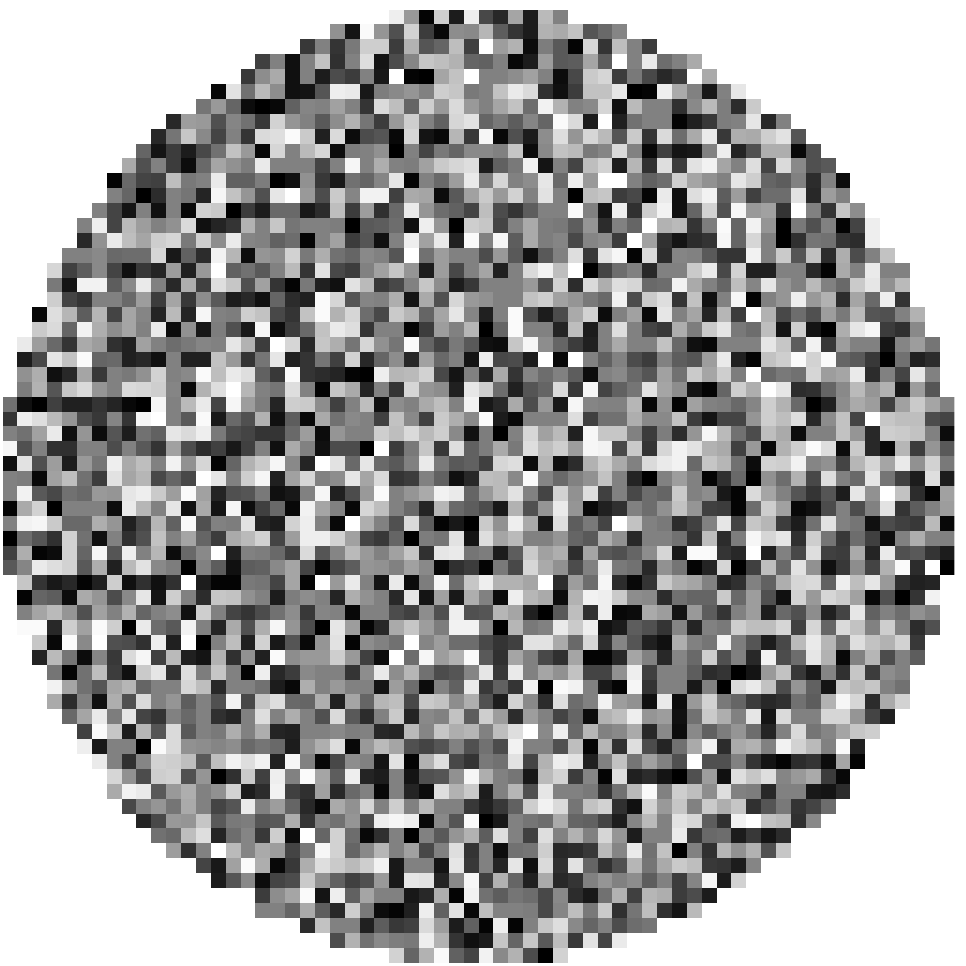}
\caption{Top: Realization of images from the \spikes{} class of relative sparsity $\kappa = k/n$ values $0.1$, $0.3$, $0.5$, $0.7$, $0.9$, gray-scale $[0,1]$. Bottom: The same for the \signedspikes{} class, gray-scale $[-1,1]$.\label{fig:spikesexamples}}
\end{figure}

\subsection{Images for \atv{}}
For \atv{} we wish to generate test images having a prescribed value of $\|\dmat{}^T\im{}\|_0$, where $\|\cdot\|_0$ is the cardinality. Due to the operator this is a less trivial task than in the directly sparse case. For certain operators this can be accomplished using a technique from \cite{Nam2013cosparse} but as pointed out in that work, the technique does not apply to the finite-difference operator $\dmat{}^T$. Here, we present two methods for this purpose. The first method, \trununif{}, produces test images that \emph{in expectation} achieve the target sparsity, while the second, \altproj, produces test images of precisely the target sparsity.

\begin{figure}[htb]
\newcommand{\ww}{0.19}
\includegraphics[width=\ww\linewidth]{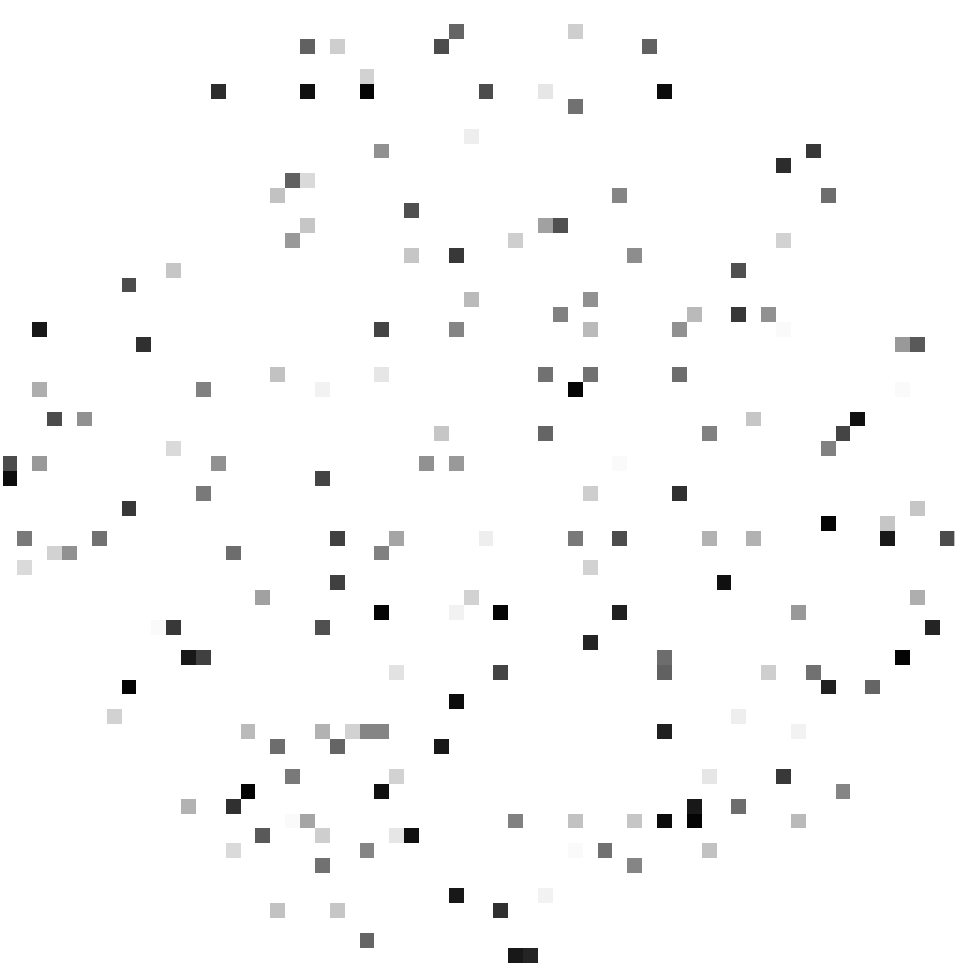}
\includegraphics[width=\ww\linewidth]{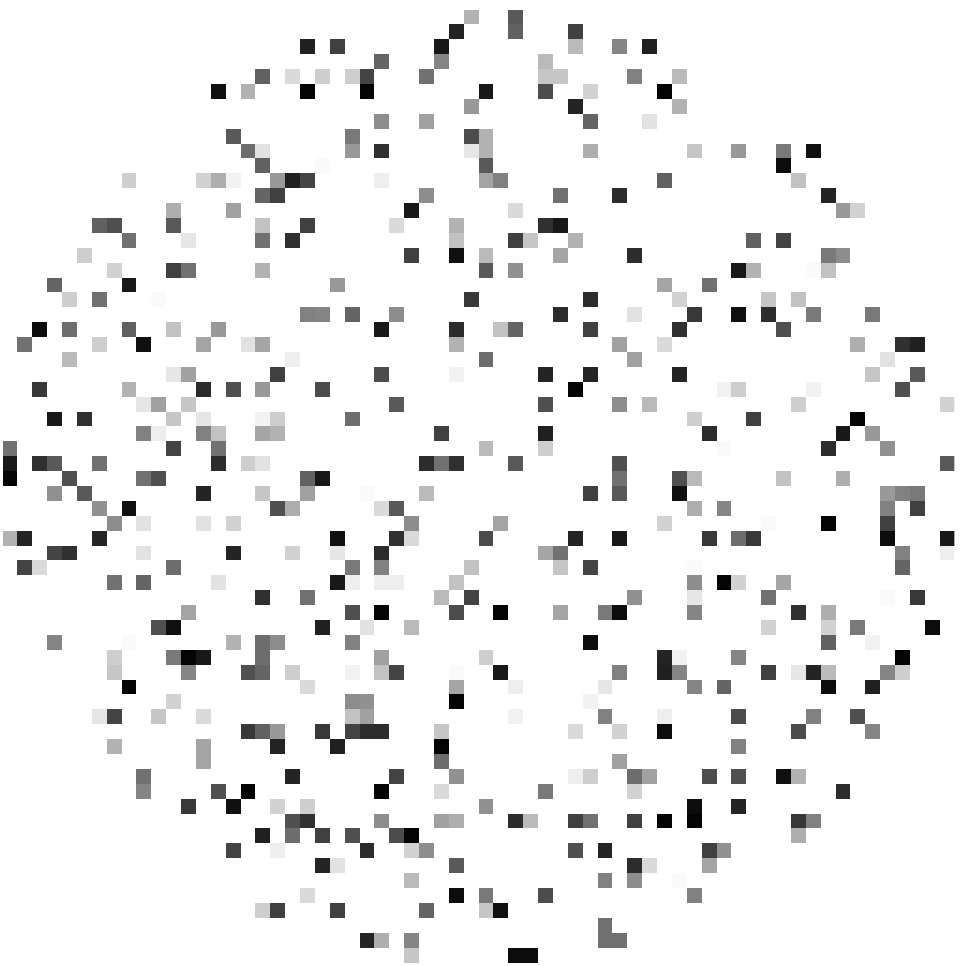}
\includegraphics[width=\ww\linewidth]{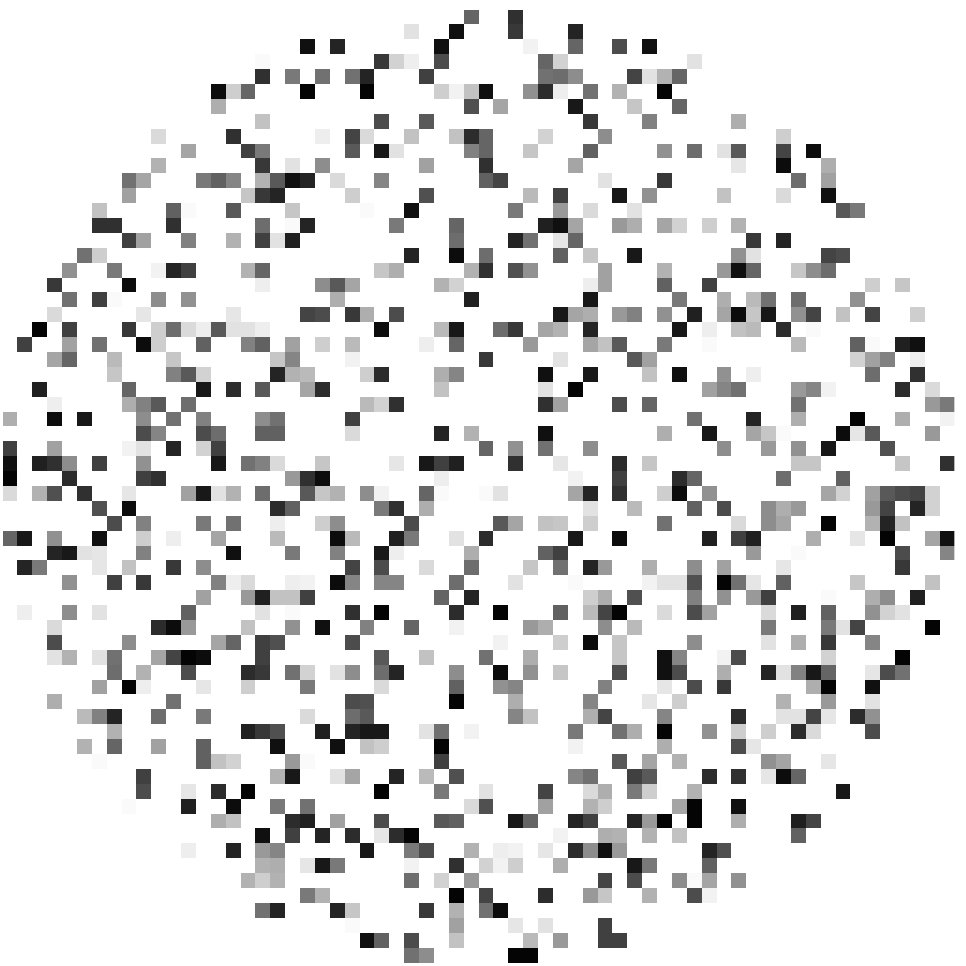}
\includegraphics[width=\ww\linewidth]{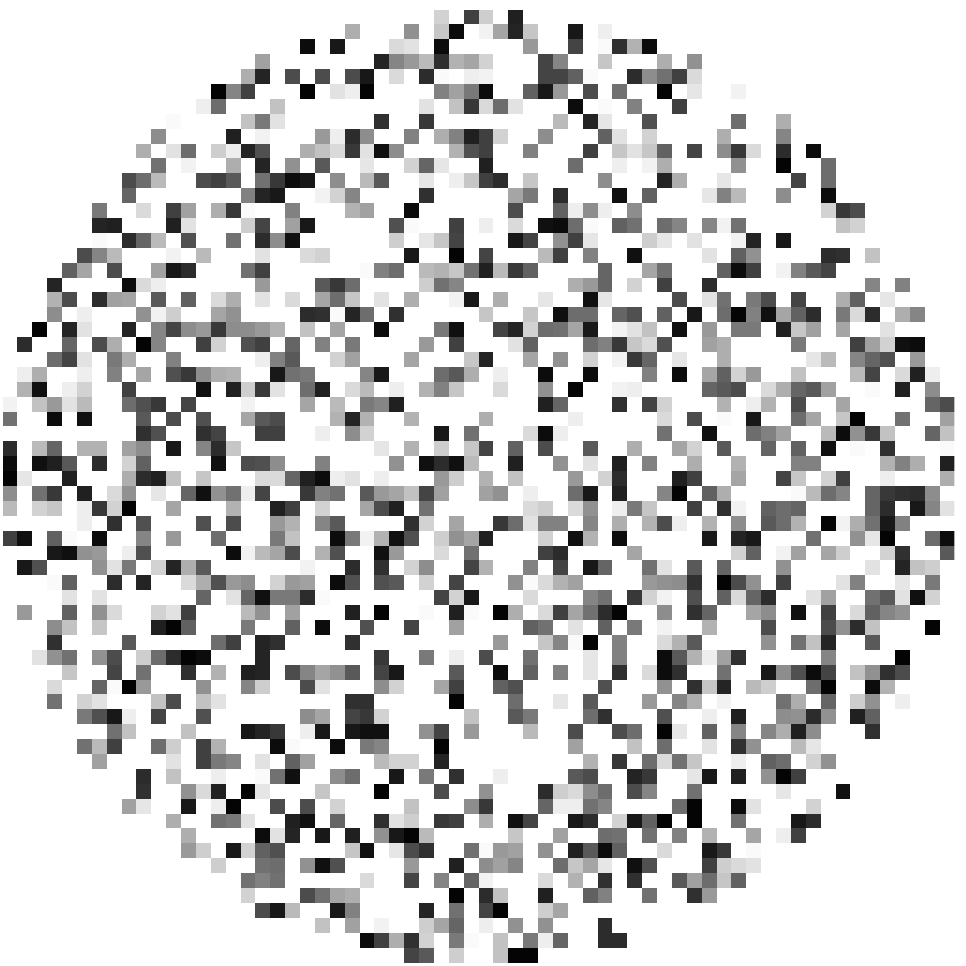}
\includegraphics[width=\ww\linewidth]{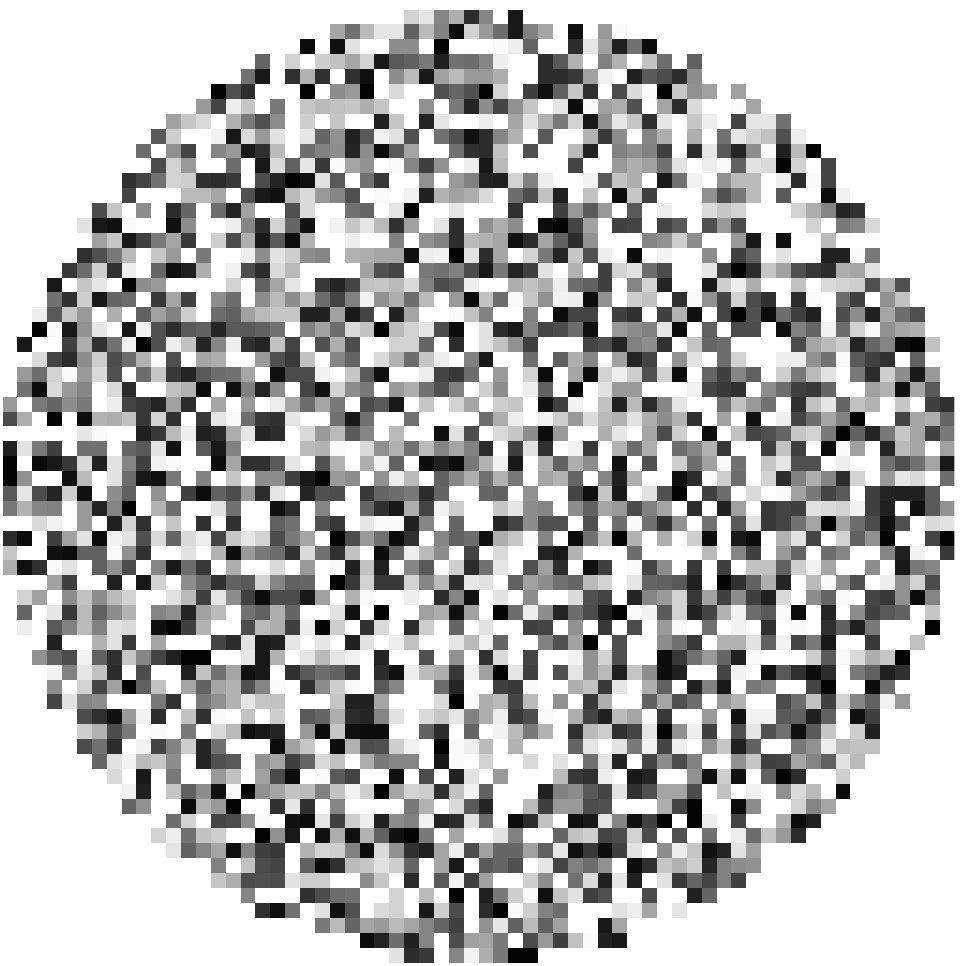}\\
\includegraphics[width=\ww\linewidth]{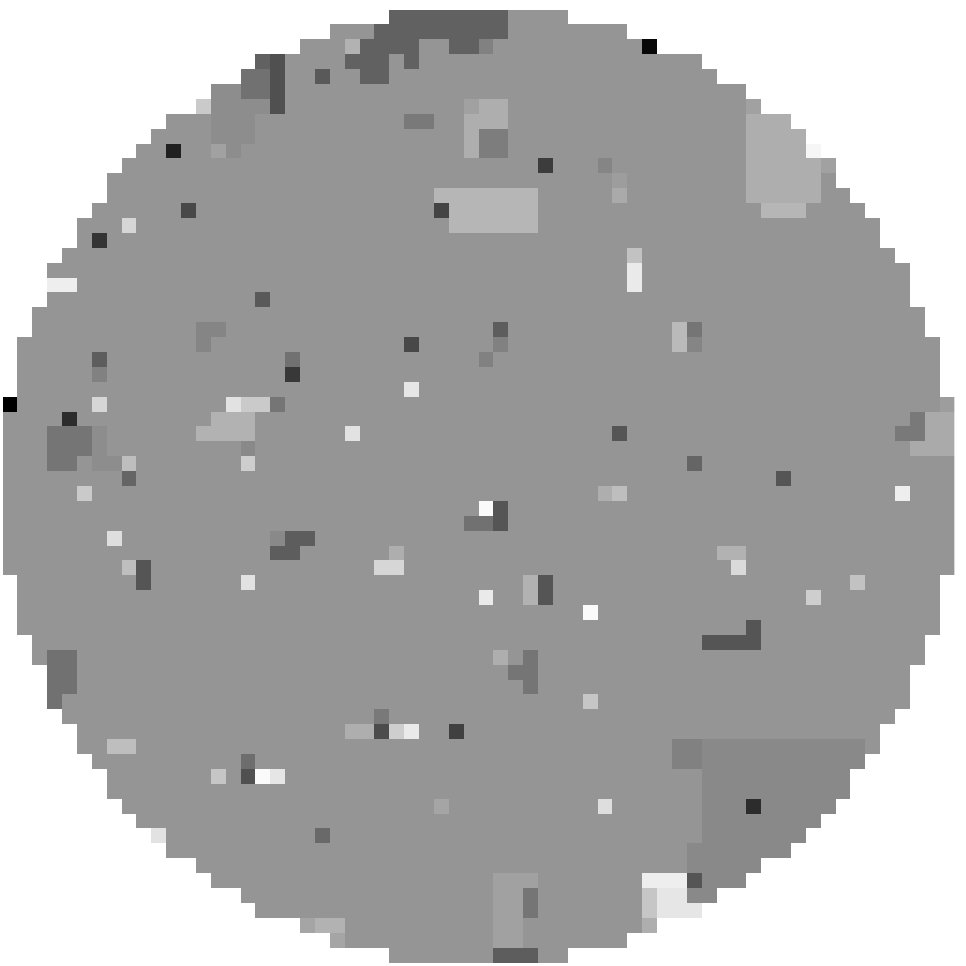}
\includegraphics[width=\ww\linewidth]{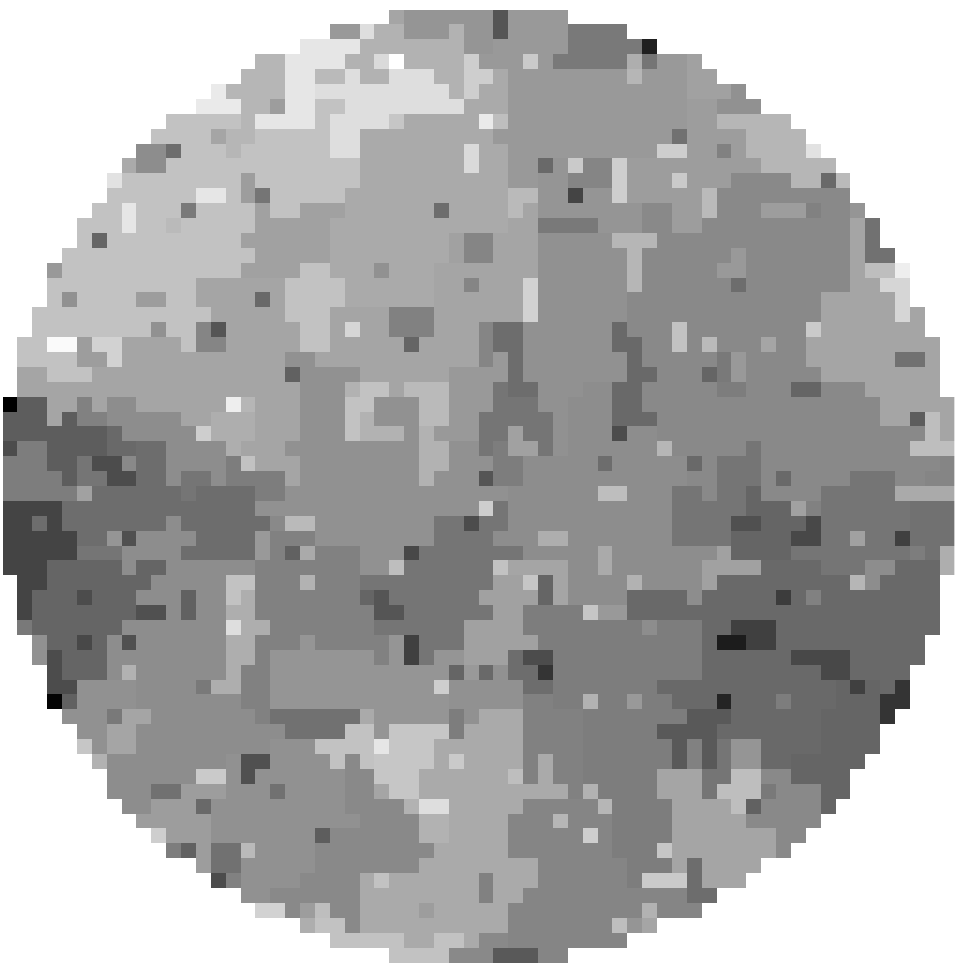}
\includegraphics[width=\ww\linewidth]{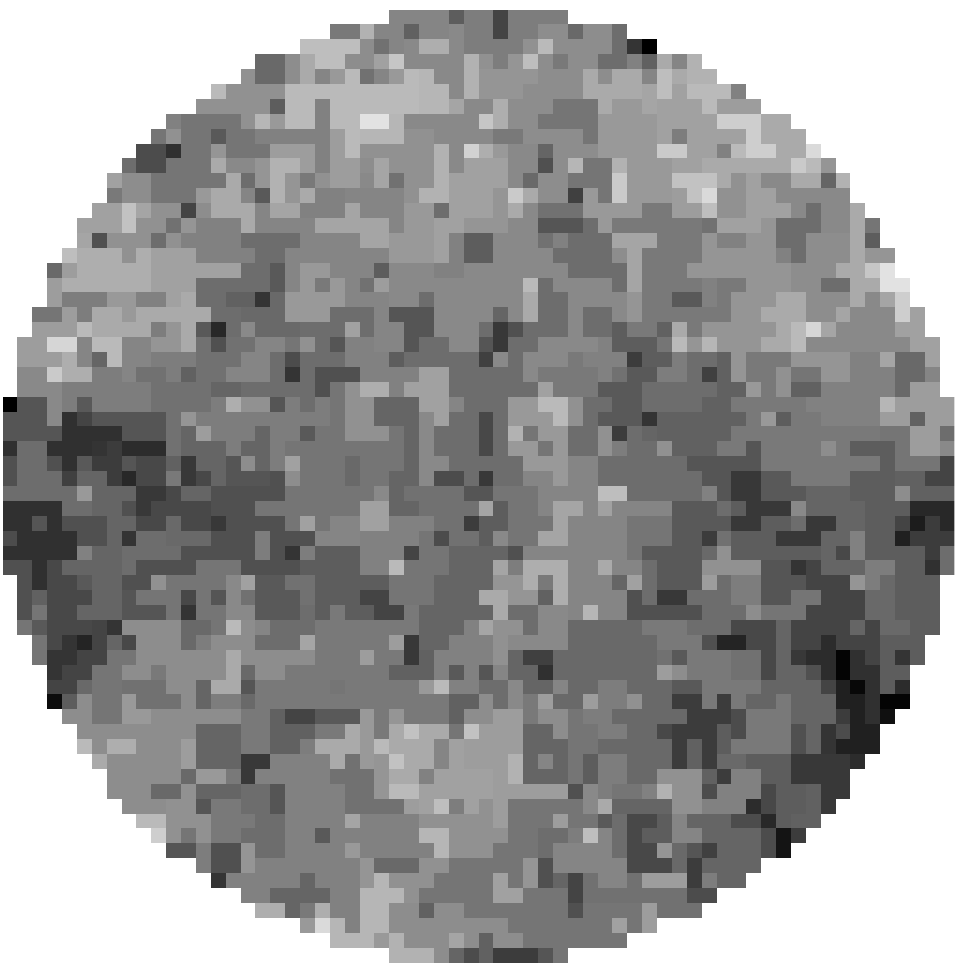}
\includegraphics[width=\ww\linewidth]{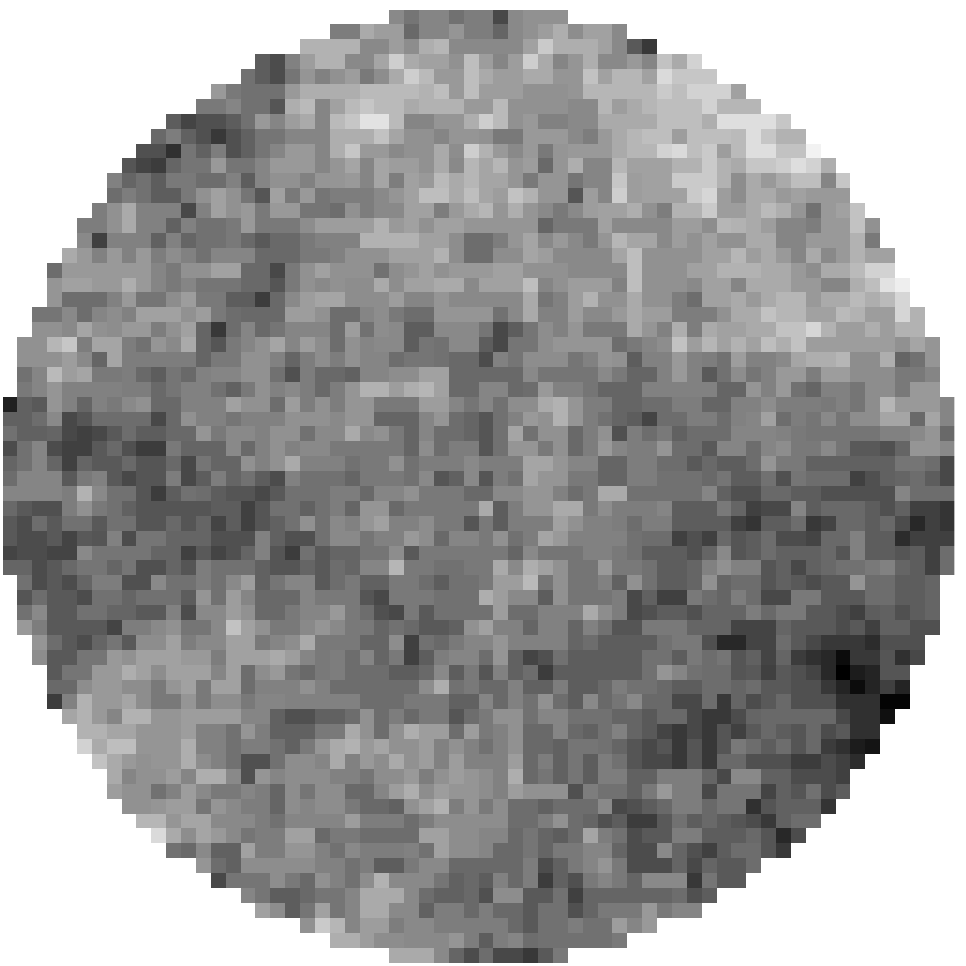}
\includegraphics[width=\ww\linewidth]{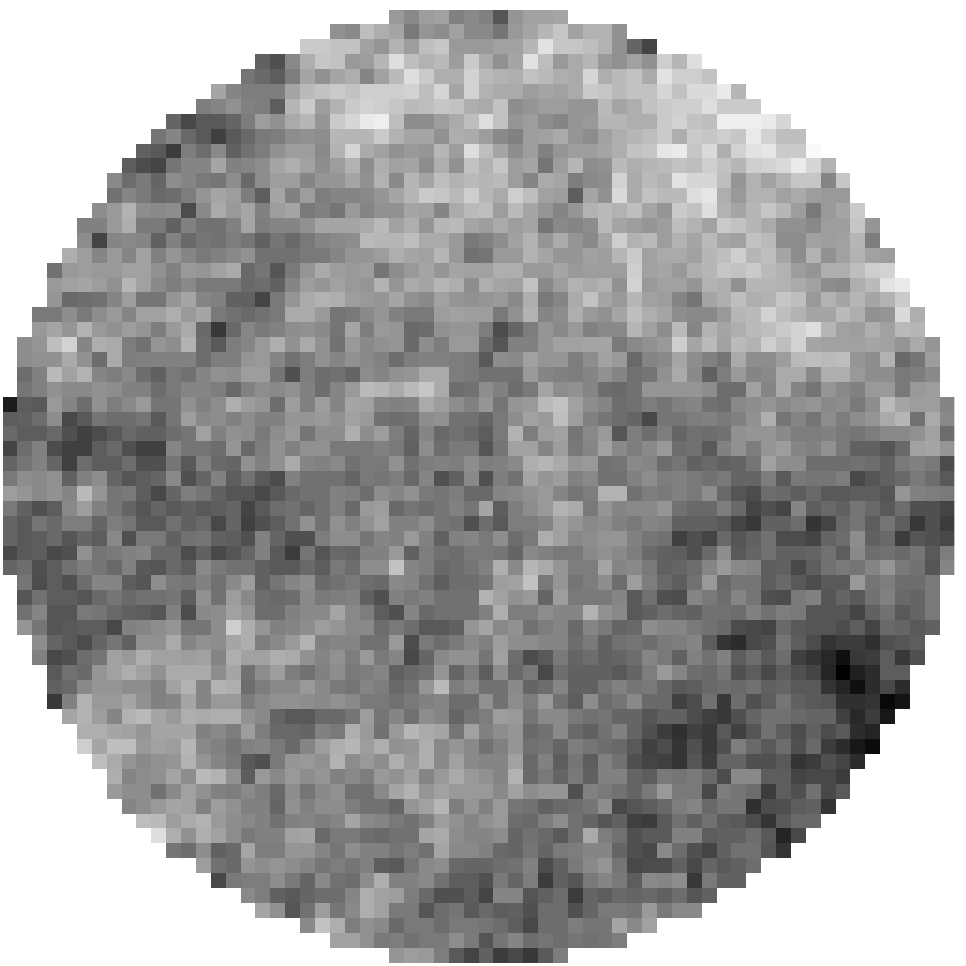}\\
\includegraphics[width=\ww\linewidth]{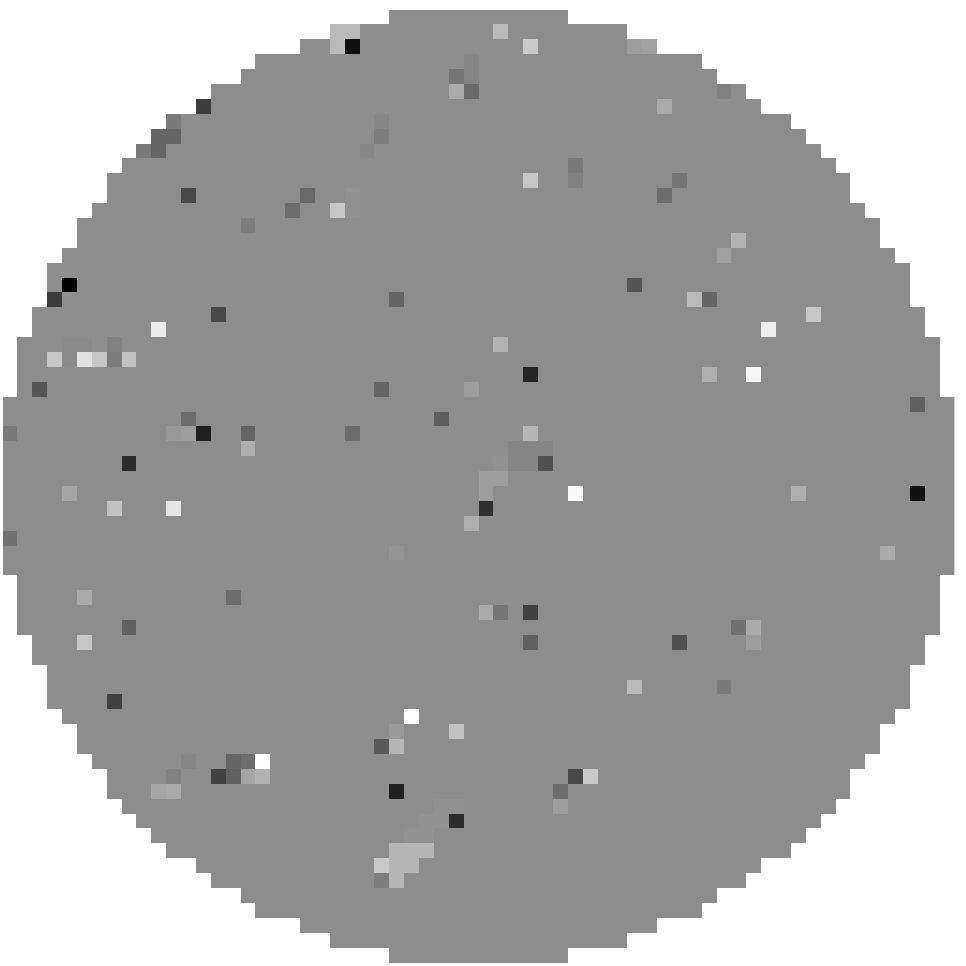}
\includegraphics[width=\ww\linewidth]{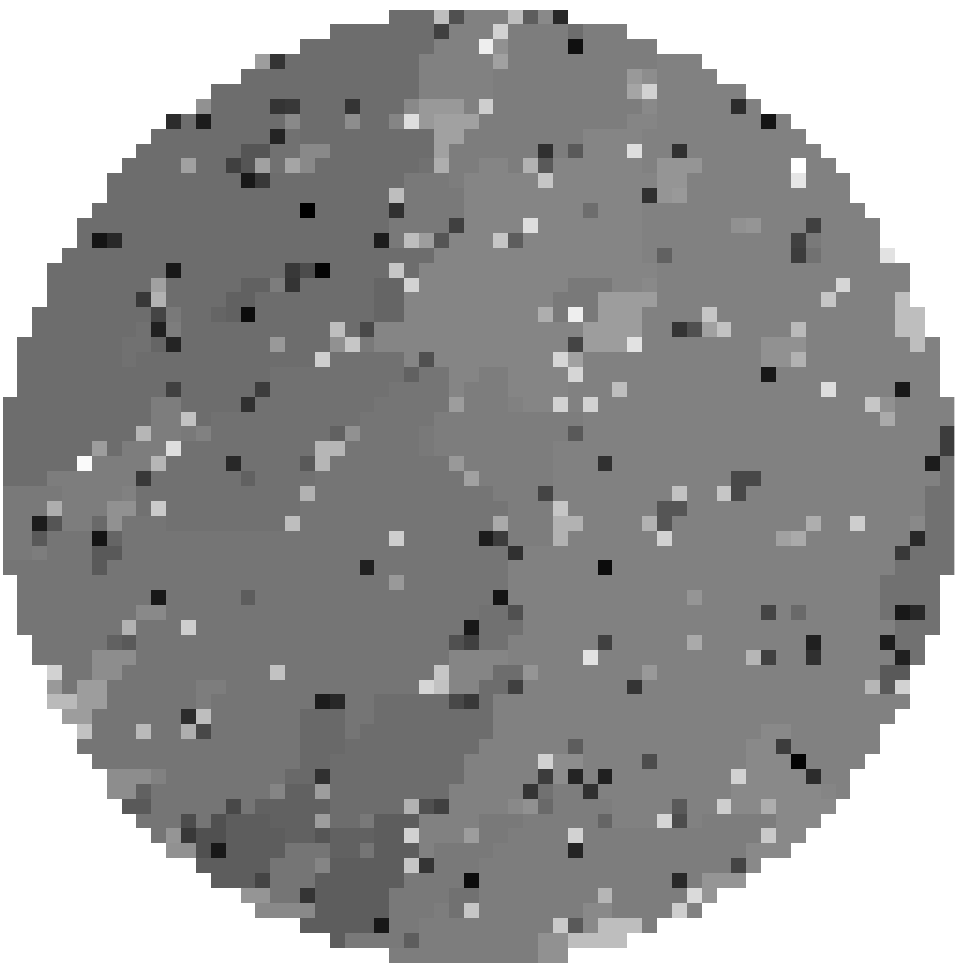}
\includegraphics[width=\ww\linewidth]{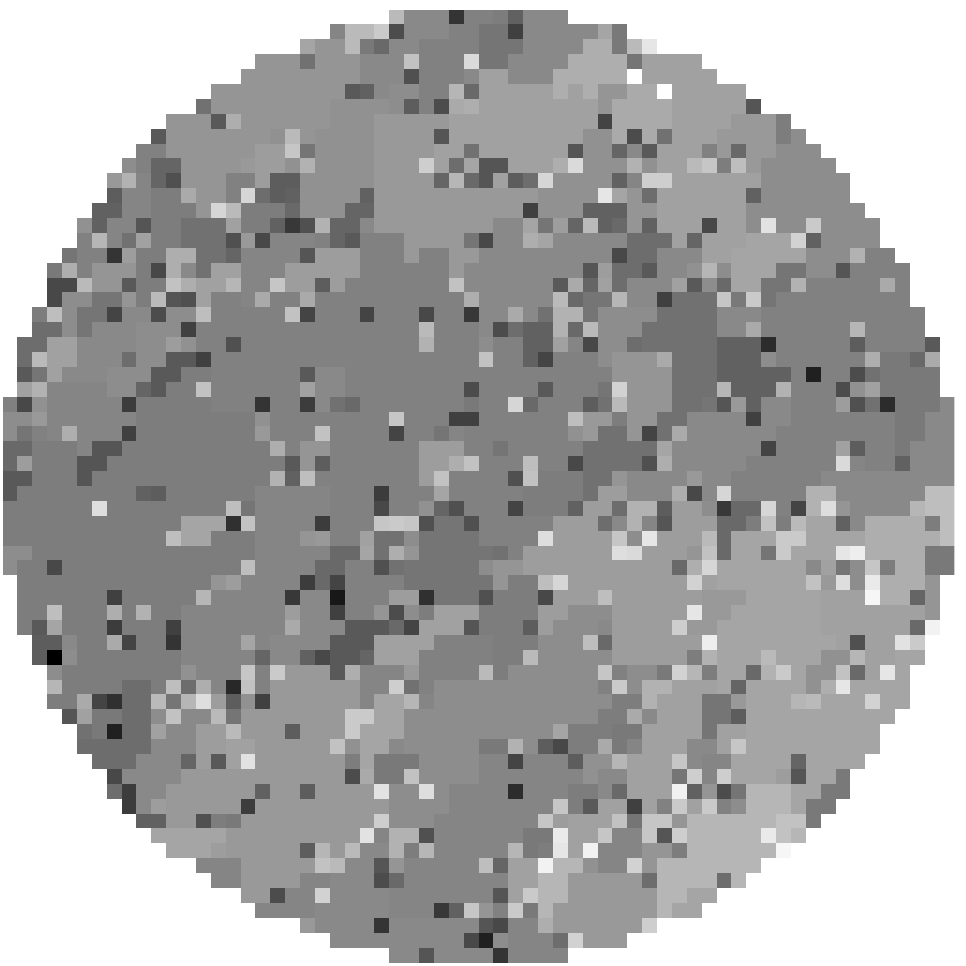}
\includegraphics[width=\ww\linewidth]{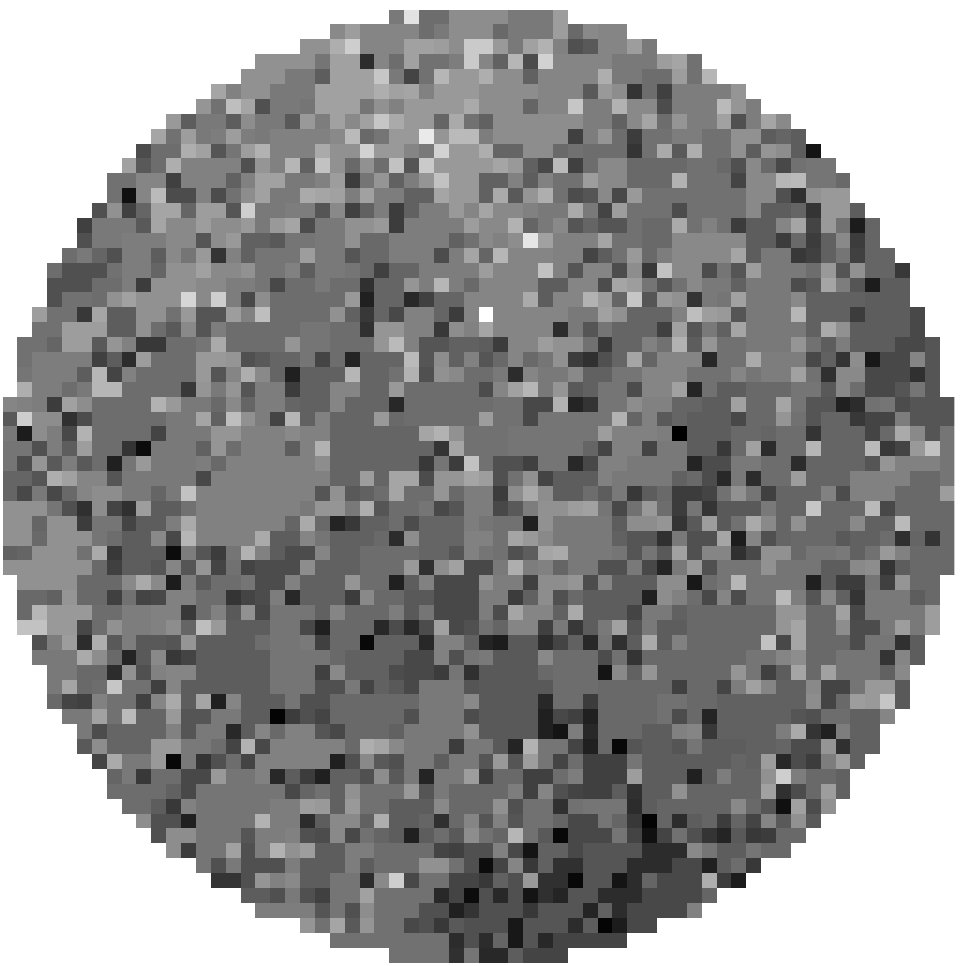}
\includegraphics[width=\ww\linewidth]{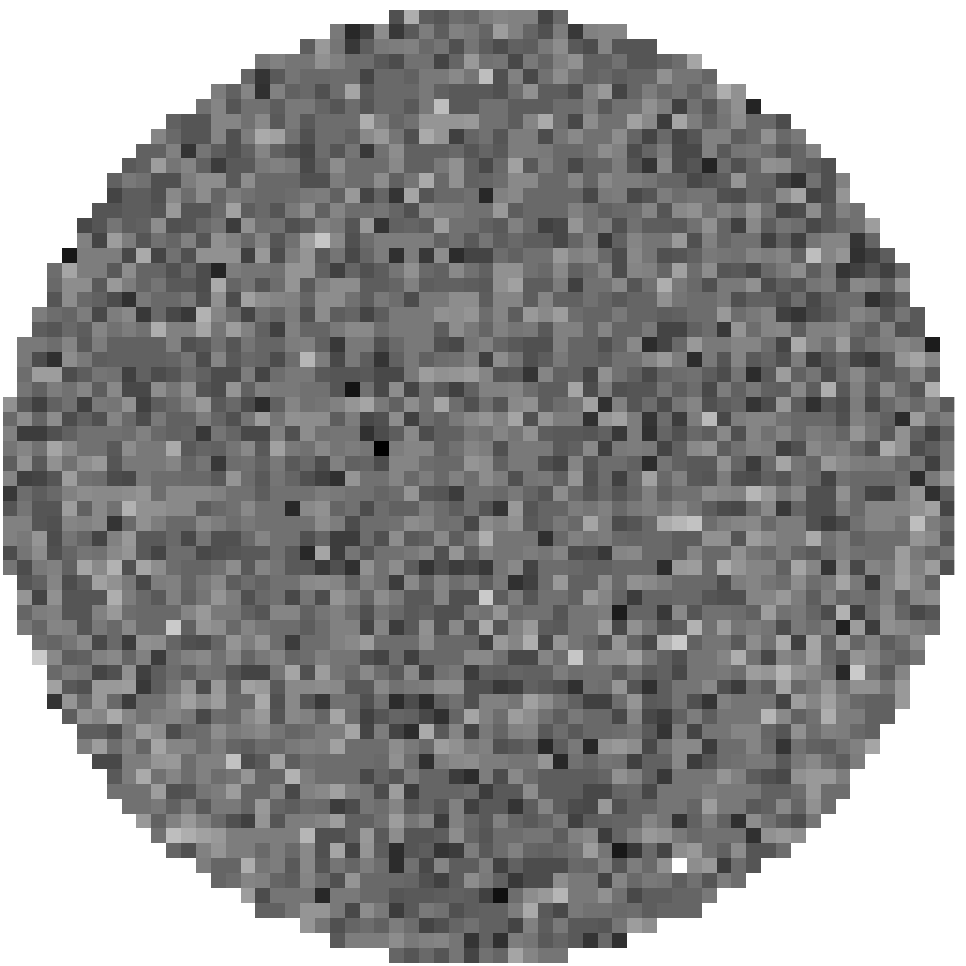}
\caption{Top: A set of \trununif{} images of relative sparsity $\kappa = k/n$ values $0.2$, $0.6$, $1.0$, $1.4$, $1.8$, gray-scale $[0,1]$. Middle: A set of `\altproj{} (anisotropic) images of relative sparsity values $0.2$, $0.6$, $1.0$, $1.4$, $1.8$, gray-scale $[-1,1]$. Bottom: A set of \altproj{} (isotropic) images of relative sparsity values $0.1$, $0.3$, $0.5$, $0.7$, $0.9$, gray-scale $[-1,1]$. \label{fig:trununifaltprojexamples}}
\end{figure}

\subsubsection{The \trununif{} class}
\label{sec:truncunif}
The \trununif{} class produces images $\im{}$
according to the following heuristic. Given a target number of nonzeros $k$ of
the length-$N$ vector $\dmat{}^T\im{}$, where $\dmat{}^T$ has $|B^c|$ rows that do not correspond to differences across the image boundary, and a number $F$, which is the number of gray values in the image, satisfying
\begin{equation}
 k \leq |B^c| \frac{F-1}{F}, \label{eq:maxk_trununif}
\end{equation}
do the following:
\begin{enumerate}
\item Compute 
$\omega = (1-\sqrt{1-kF/(|B^c|(F-1))})/F$.
\item Partition the interval $[0,1]$ into $F$ intervals $I_1,\dots,I_F$ where intervals $\ell=1,\dots, F-1$ have widths $\omega_\ell = \omega$ and the last one has width  $\omega_F = 1-(F-1)\omega$, i.e. the intervals are $I_\ell = [(\ell-1)\omega,\ell\omega[$ for $\ell=1,\dots,F-1$ and $I_F = [(F-1)\omega,1]$. Denote by $f_\ell$ the midpoint of the $\ell$-th interval. 
\item Assign for every pixel the value $f_\ell$ with probability $\omega_\ell$.
\end{enumerate}
The following lemma shows that the outcome of this method is an image that in expectation has $k$ nonzeros after application of $\dmat{}^T$ 
as desired:
\begin{lemma} \label{lem:trununif}
  Let $\im{}$ be generated by the four steps above and denote $\zvec{} = \dmat{}^T \im{}$.
  Then the expected value of the number of non-zero entries in $\zvec{}$ is $k$.
\end{lemma}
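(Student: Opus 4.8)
The plan is to use the independence of the pixel values together with linearity of expectation, so that the problem reduces to computing a single per-edge probability and then verifying algebraically that the prescribed $\omega$ attains the target count. First I would make explicit the probability model implied by the construction: the pixels of $\im{}$ are i.i.d., each taking the value $f_\ell$ with probability $\omega_\ell$, where $\omega_\ell = \omega$ for $\ell = 1,\dots,F-1$ and $\omega_F = 1-(F-1)\omega$; these weights sum to one, so the assignment is well defined. The midpoints $f_1,\dots,f_F$ are pairwise distinct since the intervals $I_1,\dots,I_F$ are disjoint. Each entry $\zvec{}_i = \bm d_i^T\im{}$ associated with a non-boundary row is the difference of two adjacent pixel values, and is nonzero precisely when those two pixels carry distinct values; the remaining rows correspond to differences across the image boundary and contribute no nonzeros, so only the $|B^c|$ non-boundary rows are relevant.

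Next I would write $\|\zvec{}\|_0 = \sum_i \mathbf{1}[\zvec{}_i \neq 0]$ and take expectations. By linearity, $\mathbb{E}[\|\zvec{}\|_0] = \sum_{i\in B^c} \mathbb{P}(\zvec{}_i\neq 0)$. For a non-boundary edge the two incident pixels are independent and identically distributed, so the probability that they agree is $\sum_{\ell=1}^F \omega_\ell^2$, whence
\begin{equation*}
 p := \mathbb{P}(\zvec{}_i \neq 0) = 1 - \sum_{\ell=1}^F \omega_\ell^2
\end{equation*}
takes the same value $p$ for every $i\in B^c$. This yields $\mathbb{E}[\|\zvec{}\|_0] = |B^c|\, p$, and it remains to show $p = k/|B^c|$.

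The core of the argument is the algebraic verification. Substituting the weights I would compute
\begin{equation*}
 \sum_{\ell=1}^F \omega_\ell^2 = (F-1)\omega^2 + \big(1-(F-1)\omega\big)^2 = 1 - 2(F-1)\omega + (F-1)F\,\omega^2,
\end{equation*}
so that $p = (F-1)\omega\,(2 - F\omega)$. Writing $u = F\omega$, this reads $p = \tfrac{F-1}{F}\,u(2-u)$, and the definition of $\omega$ is exactly the smaller root $u = 1 - \sqrt{1 - kF/(|B^c|(F-1))}$ of the quadratic $u(2-u) = kF/(|B^c|(F-1))$. Inserting this gives $p = \tfrac{F-1}{F}\cdot\tfrac{kF}{|B^c|(F-1)} = k/|B^c|$, and hence $\mathbb{E}[\|\zvec{}\|_0] = |B^c|\,p = k$. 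The hypothesis \eqref{eq:maxk_trununif} is precisely what makes the discriminant nonnegative and forces $u\in[0,1]$, i.e. $\omega \le 1/F$, so that the selected root is real and the resulting $\omega_F = 1-(F-1)\omega$ is a valid nonnegative probability.

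I expect the main obstacle to be not the computation but the careful bookkeeping at the boundary: one must justify that the boundary difference rows genuinely contribute no nonzeros, so that the count is governed by $|B^c|$ alone, and confirm that the i.i.d.\ reading of the assignment step is the intended model. Once these modelling points are pinned down, the remainder is the routine quadratic-root check above, together with the observation that the admissible (smaller) root is the one singled out by \eqref{eq:maxk_trununif}.
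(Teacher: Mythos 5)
Your proof is correct and follows essentially the same route as the paper's: indicator variables plus linearity of expectation, the boundary/interior split under Neumann boundary conditions, and the identical per-edge probability $(F-1)\omega(2-F\omega)$ --- you obtain it as the collision probability $1-\sum_{\ell}\omega_\ell^2$ whereas the paper conditions on one pixel's value via the law of total probability, which is algebraically the same. The only difference is direction: you substitute the prescribed $\omega$ (via $u=F\omega$) and verify the expectation equals $k$, while the paper sets $E(\delta_{\text{total}})=k$, solves the quadratic for $\omega$, and checks that the smaller root is the one in step (1); these are the same computation read forwards and backwards.
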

\begin{proof}
   With the described procedure, the $j$th entry of $\im{}$, $j=1,\dots,n$ is a scalar stochastic variable $\bm X_j$, and the corresponding vector stochastic variable $\bm X$ has independent, identically distributed (i.i.d.) elements. We consider also the vector stochastic variable $\zcapvec = \dmat{}^T \bm X$ and the vector stochastic indicator variable $\bm \delta$ with elements
  \begin{align*}
   \bm\delta_i = \left\{
   \begin{array}{rl}
 1 &\mbox{ if $\zcapvec{}_i \neq 0$,} \\
  0 &\mbox{ if $\zcapvec{}_i = 0$,}
       \end{array} \right. \qquad i = 1,\dots, N.
  \end{align*}
The total number of nonzeros in $\zcapvec{}$ is described by the scalar stochastic variable 
\begin{align*}
\delta_\text{total} = \sum_{i=1}^N \bm \delta_i, 
\end{align*}
of which we derive the expected value $E(\delta_\text{total})$. Using linearity of expectation and that 
\begin{align*}
 E(\delta_i) = 0 \cdot P(\zcapvec{}_i = 0) + 1 \cdot P(\zcapvec{}_i \neq 0) = P(\zcapvec{}_i \neq 0), \qquad i = 1,\dots,N,
\end{align*}
we get
\begin{align}
 E(\delta_\text{total}) = \sum_{i=1}^N P(\zcapvec{}_i \neq 0). \label{eq:EKdef}
\end{align}
To compute $P(\zcapvec{}_i \neq 0)$ we distinguish between two cases: $\zcapvec{}_i$ is a finite difference either 1) across the boundary or 2) in the interior of the image. Formally, we partition the indices $\{1,\dots,N\}$ into a boundary set $B$ and a complementary interior set $B^c$.

In case 1), $i\in B$, the choice of boundary conditions (BCs) affects the probability in question. Assuming Neumann BCs, each finite difference across the boundary is zero, i.e., $P(\zcapvec{}_i \neq 0) = 0 ~\text{for}~ i\in B$ 
corresponding to a zero row in $\dmat{}^T$ at indices $B$. Equivalently, these rows can be removed from $\dmat{}^T$, leaving $B$ empty. Assuming zero BCs instead, the probability is instead equal to $1$, since $\zcapvec{}_i$ can only be zero, if the pixel value in question is $0$, which happens with probability $0$. In this paper, we do not consider zero BCs more.

In case 2), we apply the law of total probability over the set
$\{f_1, \dots, f_F\}$,
\begin{align*}
P(\zcapvec{}_i \neq 0) = \sum_{\ell=1}^F P(\zcapvec{}_i \neq 0 \vert \bm X_{\tilde{i}} = f_\ell) P(\bm X_{\tilde{i}} = f_\ell),
\end{align*}
where $\tilde{i}$ denotes the index of the pixel at which $\zcapvec{}_i$ is evaluated.
From step (3) we know that the probability that a given pixel value is $f_\ell$ equals $\omega_\ell$. 
For the conditional probability, we note that $\zcapvec{}_i$ is computed as the difference between $X_{\tilde{i}}$ and a neighboring pixel's value. Since the pixel values are independent, and since $\bm X_{\tilde{i}}$ is given to be in the $\ell$th interval, the probability of having a nonzero $\zcapvec{}_i$ equals the probability of the neighboring pixel's value being outside interval $\ell$, which is $1-\omega_\ell$. Hence,
\begin{align*}
P(\zcapvec{}_i \neq 0) &= \sum_{\ell=1}^F (1-\omega_\ell)\omega_\ell 
= \sum_{\ell=1}^{F-1} (1-\omega)\omega + (1-\omega_F)\omega_F \\
&= (F-1)\omega(2-F\omega) \quad \text{for} \quad i \in B^c.
\end{align*}
Inserting the two cases into \eqref{eq:EKdef} yields
\begin{align*}
 E(\delta_\text{total}) = |B^c| (F-1)\omega(2-F\omega).
\end{align*}
Since we want $E(\delta_\text{total})=k$, we solve this quadratic equation for $\omega$. The smaller of the two solutions guarantees that the sum of all widths is not greater than 1 and is precisely $\omega$ from step (1). Further, 
for $\omega$ to be real-valued we require \eqref{eq:maxk_trununif}.
\end{proof}
 For the case $N_\text{side} = 64$, we get $n = 3228$ pixels within the disk-shaped mask. For \atv{} with Neumann BCs and keeping zero-rows of $\dmat{}^T$ we have $N = 2n$. Due to convexity of the disk-shaped mask we can explicitly compute $|B^c| = 2n - 2N_\text{side}$. In our numerical studies we wish to study images sampled from the entire sparsity range between $\kappa =0$ and $\kappa = 2$ with the maximal relative sparsity of $1.9$. By taking $F=40$ we can achieve images $\im{}$ with sparsity of $k = 6169$ of $\dmat{}^T \im{}$. This corresponds to a relative sparsity of $\kappa = k/n = 1.911$.
Examples of \trununif{} images are shown in Fig. \ref{fig:trununifaltprojexamples} for a range of relative sparsity values.

\subsubsection{The \altproj{} class}
\label{sec:altproj}

Since the \trununif{} class consists of images of a special
structure (namely, they have a prescribed number of different gray
levels) it may be that they also introduce a special behavior in the
recoverability. Hence, we will consider also a different class of test
images. Our goal is again to produce images $\im{}$ 
such
that $\|\dmat{}^T\im{}\|_0 = k$. 
We reformulate the problem as
follows: Find a vector $\vvec{}$ such that $\vvec{}$ is in the range of $\dmat{}^T$ and
that $\|\vvec{}\|_0=k$. If we have found such a $\vvec{}$, then we get a suitable
test image $\im{}$ by solving $\dmat{}^T\im{}=\vvec{}$. 
For a method to construct such a $\vvec{}$, we are inspired by the feasibility problem
\[
\text{find}\ \vvec{}\in \rg \dmat{}^T\cap \{\|\cdot\|_0\leq k\}.
\]
Although the set we are looking at is the intersection of a convex
with a \emph{non-convex} one, recent results indicate that an
alternating projection approach may work~\cite{hesse2013nonconvex}. Hence, we
perform the following iteration:
\begin{enumerate}
\item Choose a random starting point $\vvec{}^0 \in \RR^N$; set $j=0$.
\item Set $\vvec{}^{j+\frac12}$ as orthogonal projection of $\vvec{}^{j}$ onto $\rg
  \dmat{}^T$. With the help of the pseudoinverse $(\dmat{}^T)^\dag$ of $\dmat{}^T$ this is
  written as $\vvec{}^{j+\frac12} = \dmat{}^T(\dmat{}^T)^\dag \vvec{}^{j}$.
\item Set $\vvec{}^{j+1}$ as orthogonal projection of $\vvec{}^{j+\frac12}$ onto
  the set $\{\|\cdot\|_0\leq k\}$: Keep the
  largest $k$ entries of $\vvec{}^{j+\frac12}$ and set the rest to zero. 
  If the projection yields fewer than $k$ nonzeros, project $\vvec{}^{j+\frac12}$ on a set with higher sparsity.
  \item If converged, set $\im{} = (\dmat{}^T)^\dag \vvec{}^{j+1}$; otherwise increment $j$ and go to step 2.
\end{enumerate}

If the method converges, it is guaranteed to produce an image $\im{}$
with the desired properties. However, in practice it does not always
convergence. Hence, we perform a maximum number of iterations (in
the range of a few thousands) and if we do not observe convergence to
a feasible point $\vvec{}$, we restart the method with a different initial
point. Typically, we found that only a few restarts sufficed for producing a
desired image.

We also consider a non-negative version, \altprojnonneg{}, for which an image is generated from an \altproj{} image by shifting all pixel values by the smallest possible positive scalar such that all pixel values become non-negative.

\subsection{Images for \itv{}}
For isotropic TV we basically proceeded similarly to \altproj{}. However, here we
considered the feasibility problem
\[
\text{find}\ \bm Y\in \rg \dlinop^T\cap \{\|\cdot\|_{0,2}\leq k\},
\]
where $\|\cdot \|_{0,2}$ counts the number of rows with nonzero $\ell^2$-norm. Note that the latter set are the images $\im{}$ for which the Euclidean
norm of the gradient has only $k$ non-zero entries. Hence, we modify the iteration to:
\begin{enumerate}
\item Choose a random starting point $\bm Y^0 \in \RR^{r\times{}p}$; set $j=0$.
\item Set $\bm Y^{j+\frac12}$ as orthogonal projection of $\bm Y^{j}$ onto $\rg
  \dlinop^T$. With the help of the pseudoinverse $(\dlinop^T)^\dag$ of $\dlinop^T$ this is
  written as $\bm Y^{j+\frac12} = \dlinop^T(\dlinop^T)^\dag \bm Y^{j}$.
\item Set $\bm Y^{j+1}$ as orthogonal projection of $\bm Y^{j+\frac12}$
  onto the set $\{\|\cdot\|_{0,2}\leq k\}$: Keep the $k$ rows of $\bm Y^{j+\frac12}$ with largest 2-norm 
  and set the rest to zero.
  If the projection yields fewer than $k$ nonzero rows, project $\bm Y^{j+\frac12}$ on a set with higher sparsity.
\item If converged, set $\im{} = (\dlinop^T)^\dag \bm Y^{j+1}$; otherwise increment $j$ and go to step 2.
\end{enumerate}

Examples of both anisotropic and isotropic \altproj{} images are shown in Fig. \ref{fig:trununifaltprojexamples} for a range of relative sparsity values. The relative sparsity values for anisotropic are chosen as twice the isotropic ones to enable a rough comparison between images from each class of `comparable' sparsity.

\section{Numerical experiments} \label{sec:results}
As in \cite{Joergensen_eqconpap_v2_arxiv:2014} we wish to show empirically that CT image reconstruction by sparsity-exploiting methods admit sharp phase transitions as known from compressed sensing. The results in \cite{Joergensen_eqconpap_v2_arxiv:2014} only covered \ellone{}. Here, we extend to \atv{} and \itv{} and construct phase diagrams by solving the reconstruction problem as well as the uniqueness tests.

\subsection{Phase diagrams for \ellone{}}
In \cite{Joergensen_eqconpap_v2_arxiv:2014} it was found that reconstruction by $\ell_1$-minimization for \spikes{} images yields a sharp phase transition. Here, through uniqueness testing we verify this result 
and further extend to the class \signedspikes{} with signed entries. 

We consider images of size $N_\text{side} = 64$ and at each relative sparsity value $\kappa = k/n = 0.025, 0.05$, $0.1$,$0.2$,$0.3,\dots$,$0.9$ we generate $100$ instances. For each instance $\im{}^*$, we generate synthetic CT data $\sino{} = \sysmat{}\im{}^*$ corresponding to $N_\text{v} = 1,...,32$ projection views. At $N_\text{v} \leq 25$ the linear system is underdetermined, while at $N_\text{v} \geq 26$ the system matrix $\sysmat{}$ has full rank and $\im{}^*$ is the unique solution no matter its sparsity. We therefore use $N_\text{v}^\text{suf} = 26$ as a reference point of full sampling at $N_\text{side} = 64$ and define the relative sampling $\mu = N_\text{v}/N_\text{v}^\text{suf}$. For each data set, reconstruction and uniqueness test are run. If the relative error 
 of the computed solution $\im{}_\ellone{}$ w.r.t. $\im{}^*$ is sufficiently small, i.e., $\|\im{}_\ellone{} - \im{}^*\|_2/\|\im{}^*\|_2 < \epsilon$ with $\epsilon = 10^{-4}$, we declare the original perfectly recovered. 

\begin{figure}[tb]
 \newcommand{\wq}{0.35}
\includegraphics[width=\wq\linewidth,trim=0cm 0cm 0cm 0cm, clip]{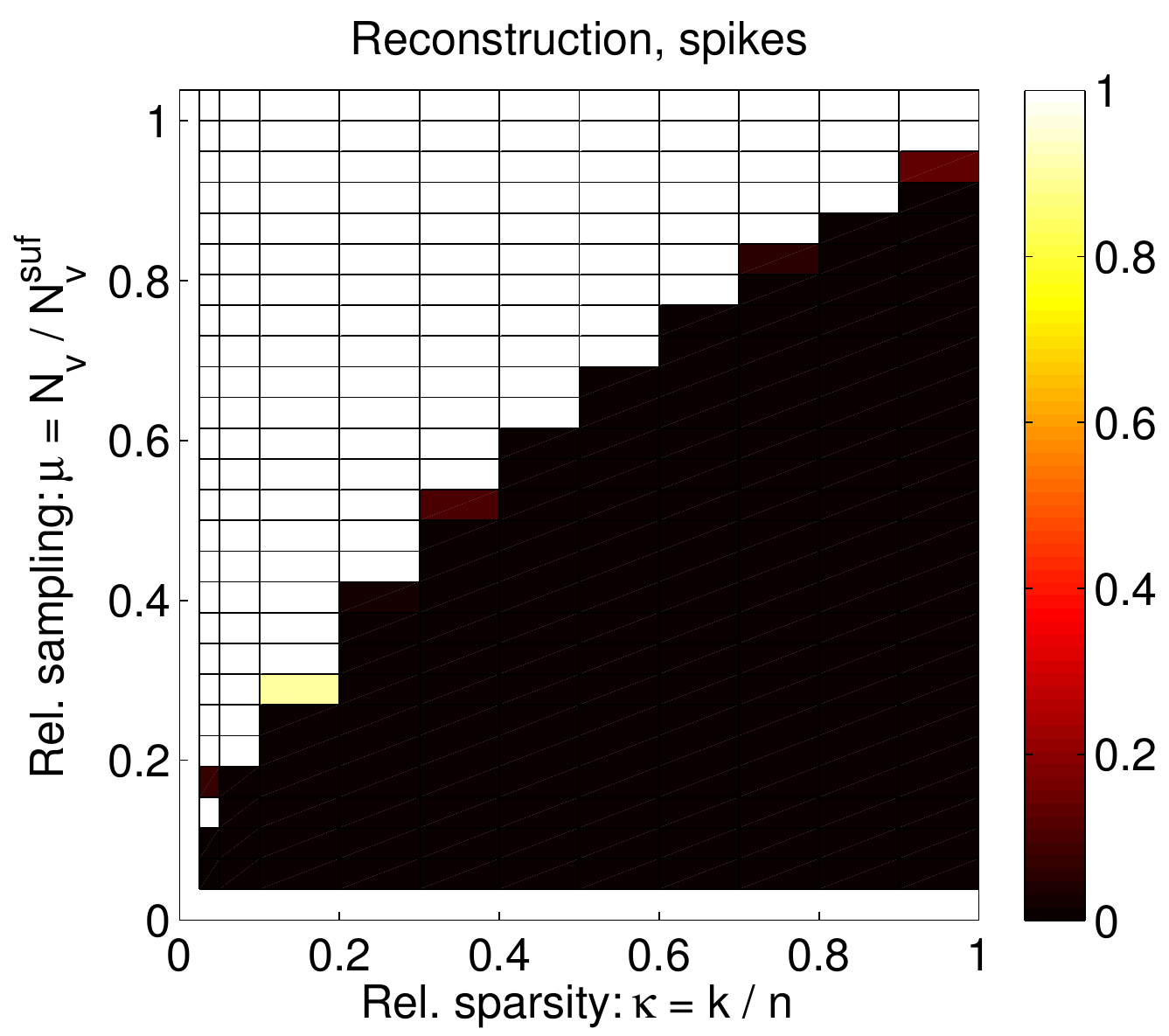}
\includegraphics[width=\wq\linewidth,trim=0cm 0cm 0cm 0cm, clip]{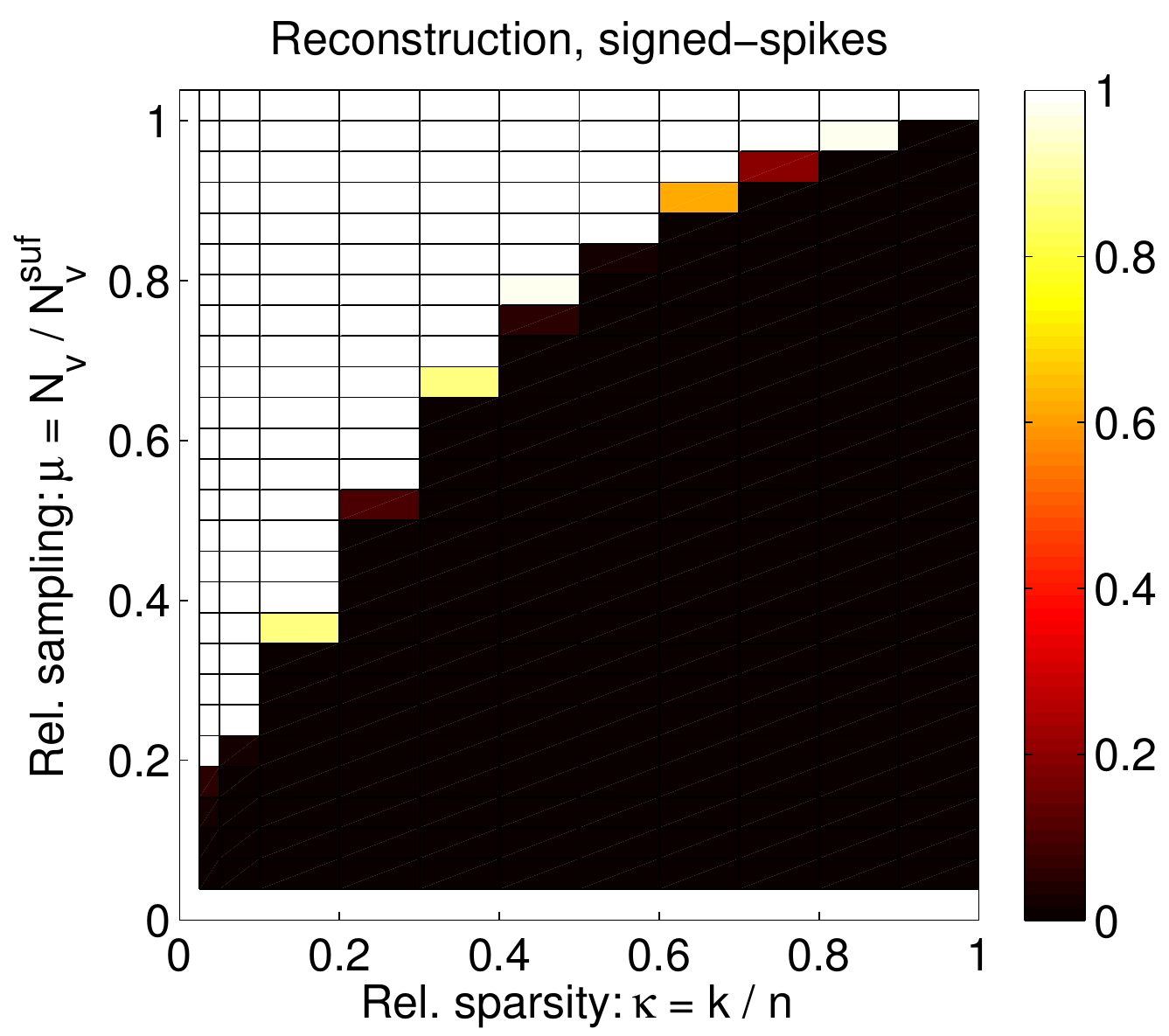}
\includegraphics[width=0.3\linewidth,trim=0cm 0cm 0cm 0cm,clip]{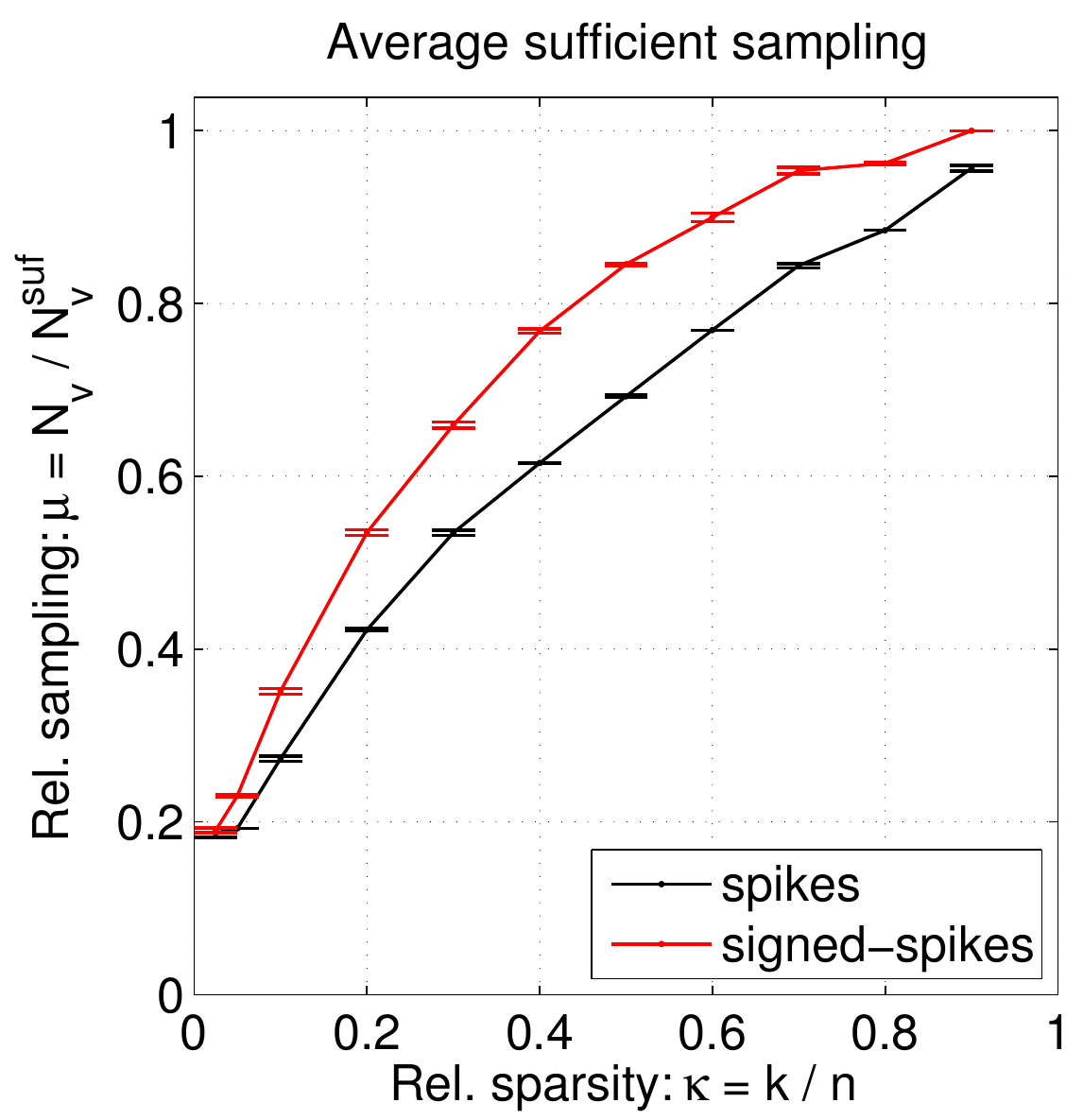}\\
\includegraphics[width=\wq\linewidth,trim=0cm 0cm 0cm 0cm, clip]{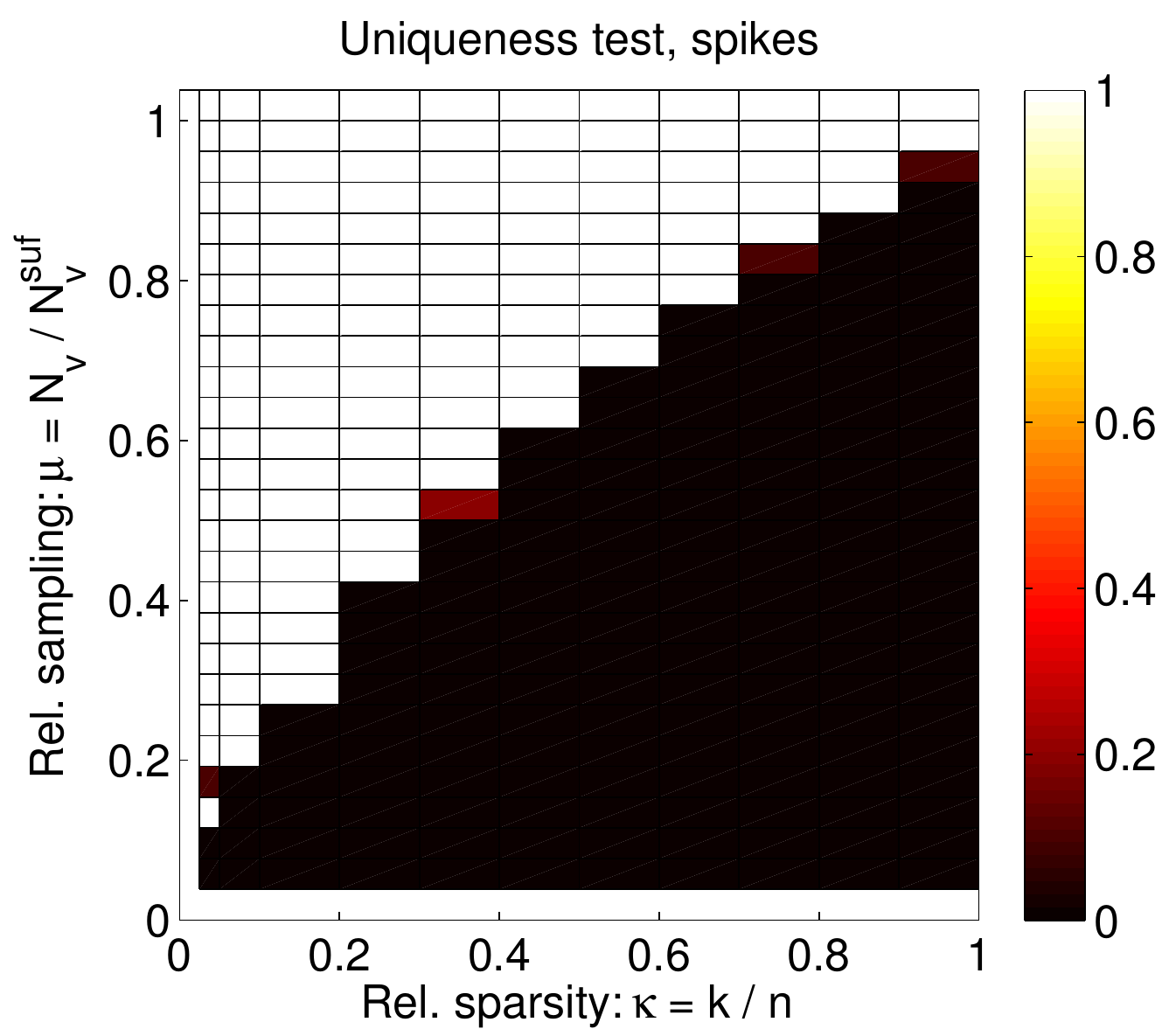}
\includegraphics[width=\wq\linewidth,trim=0cm 0cm 0cm 0cm, clip]{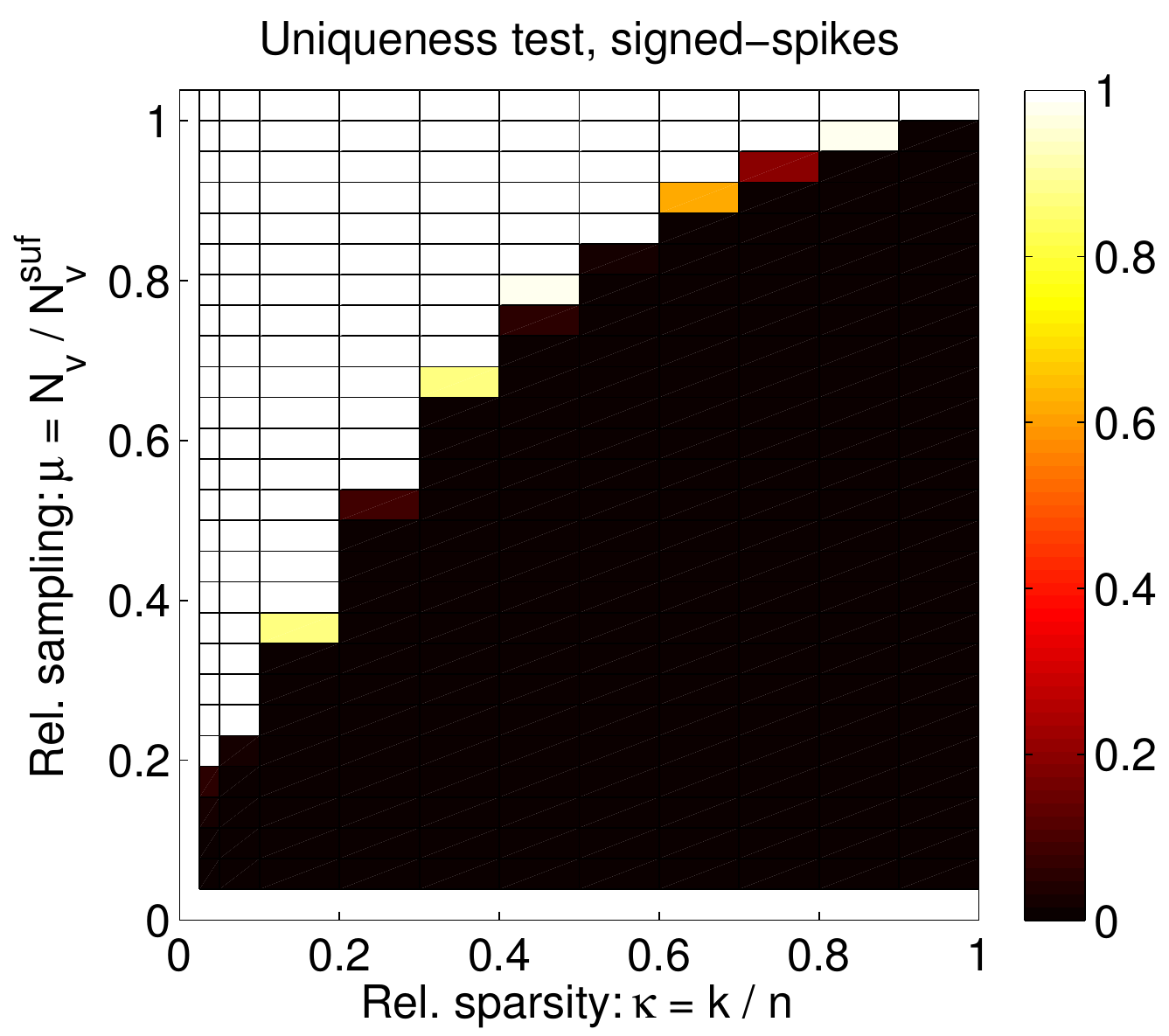}
\caption{Phase diagrams for \ellone{}. Top row: reconstruction, bottom row: uniqueness test. Left: \spikes{}, center: \signedspikes{}, right: average sufficient relative sampling point along with a $99\%$ confidence interval at each $\kappa$-value. For both classes reconstruction and uniqueness testing agree perfectly. For \signedspikes{} the average relative sufficient sampling curve is higher than for \spikes{} meaning that more projections are needed for recovery.\label{fig:bpphasediagrams}}
\end{figure}

Fig. \ref{fig:bpphasediagrams} shows, for both the \spikes{} and the \signedspikes{} classes, reconstruction and uniqueness testing phase diagrams: each rectangle corresponds to the relative sparsity value $\kappa$ at its left and the relative sampling $\mu$ at its bottom and the color indicates the fraction of instances perfectly recovered by reconstruction or deemed the unique solution by the uniqueness test. The phase diagrams are divided into into a `full-recovery' regime, in which all instances are uniquely recovered, and a `no-recovery' regime, where all instances fail to be recovered/be unique. Further, the transition from no-recovery to full-recovery is sharp, in the sense that for all relative sparsity values adding $1$--$2$ projection views changes the recovery rate from $0 \%$ to $100 \%$.

For both \spikes{} and \signedspikes{}, the uniqueness test phase diagrams are identical to the reconstruction phase diagrams, thereby mutually verifying correctness of each method and the attained phase diagrams.

The phase transition occurs at different sampling levels for the \spikes{} and \signedspikes{} classes. This is perhaps more easily seen in the right-most plot in Fig. \ref{fig:bpphasediagrams} in which the average relative sampling sufficient for recovery, i.e., the smallest value of $\mu$ at which all instances of a given sparsity are recovered, is plotted for both classes along with error bars for the $99\%$ confidence intervals. 
On average \signedspikes{} require more projection views for unique recovery. This is perhaps not surprising, as having negative pixel values can lead to negative entries in the data vector $\sino{}$, something that can not happen with non-negative pixel values due to the elements of $\sysmat{}$ being non-negative. Nevertheless, the phase diagram reveals quantitatively how signed entries affect recoverability.

\subsection{Phase diagrams for \atv{}}

In the same way as for \ellone{} we create reconstruction and uniqueness test phase diagrams for \atv{}. We consider first the \altproj{} class as well as its non-negative version 
\altprojnonneg{}. As the sparsity is measured after application of $\dmat{}^T$, the relative sparsity can now be in the range between $0$ and $2$, and in addition to the $\kappa$ values in the previous section, we include now $\kappa = 1.0, 1.1, \dots, 1.9$. 

\begin{figure}[tb]
\centering
\newcommand{\wq}{0.48}
\includegraphics[width=\wq\linewidth,trim=0cm 0cm 0cm 0cm, clip]{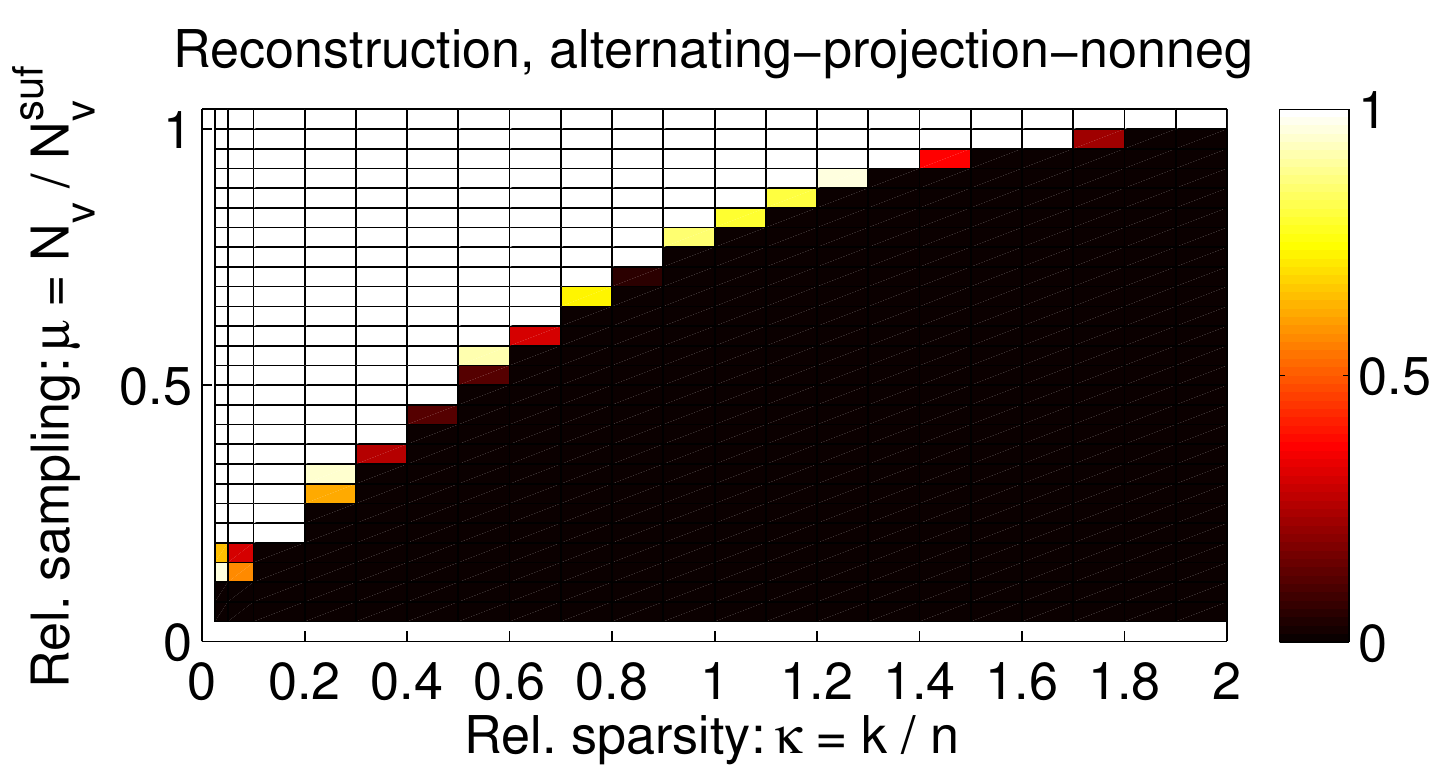}
\includegraphics[width=\wq\linewidth,trim=0cm 0cm 0cm 0cm, clip]{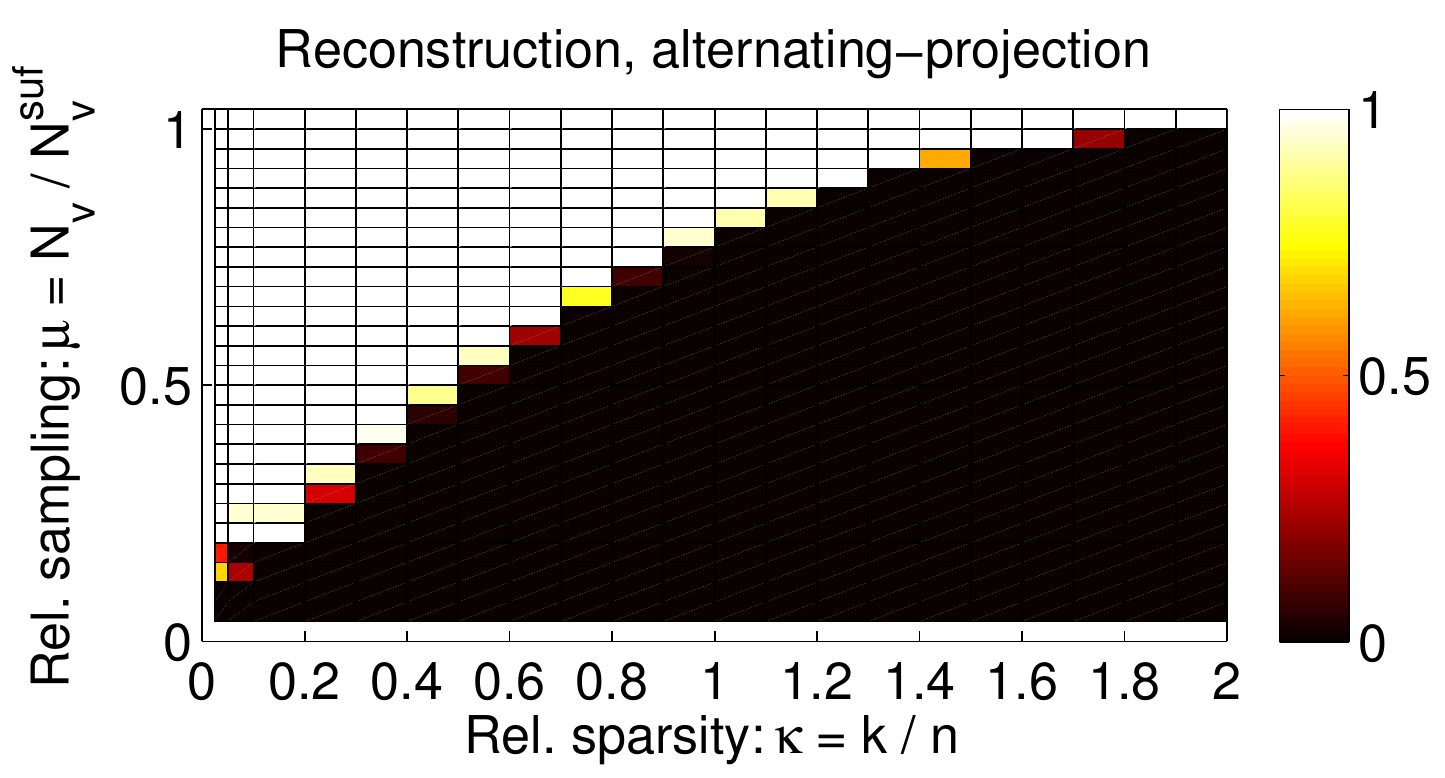}\\
\includegraphics[width=\wq\linewidth,trim=0cm 0cm 0cm 0cm, clip]{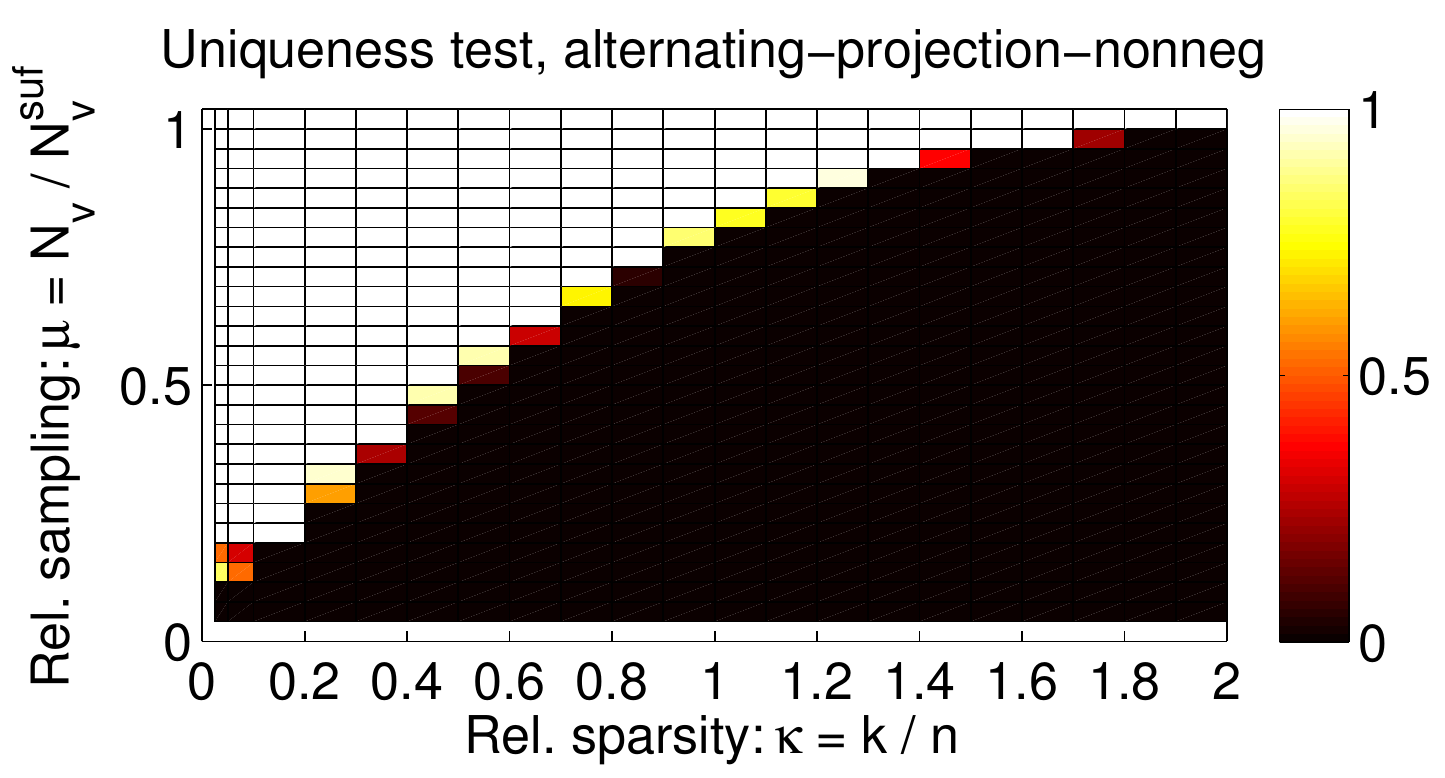}
\includegraphics[width=\wq\linewidth,trim=0cm 0cm 0cm 0cm, clip]{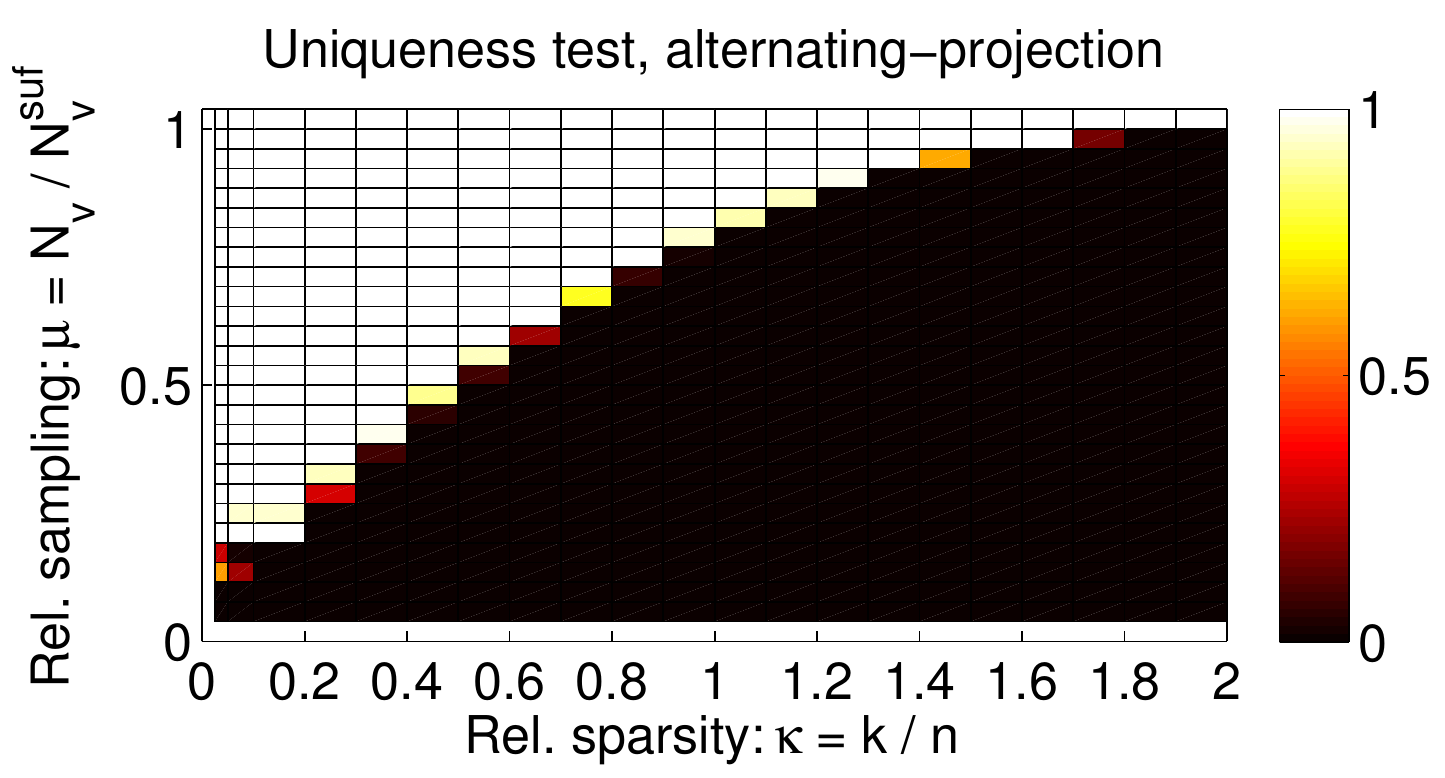}\\
\includegraphics[width=\wq\linewidth,trim=0cm 0cm 0cm 0cm,clip=true]{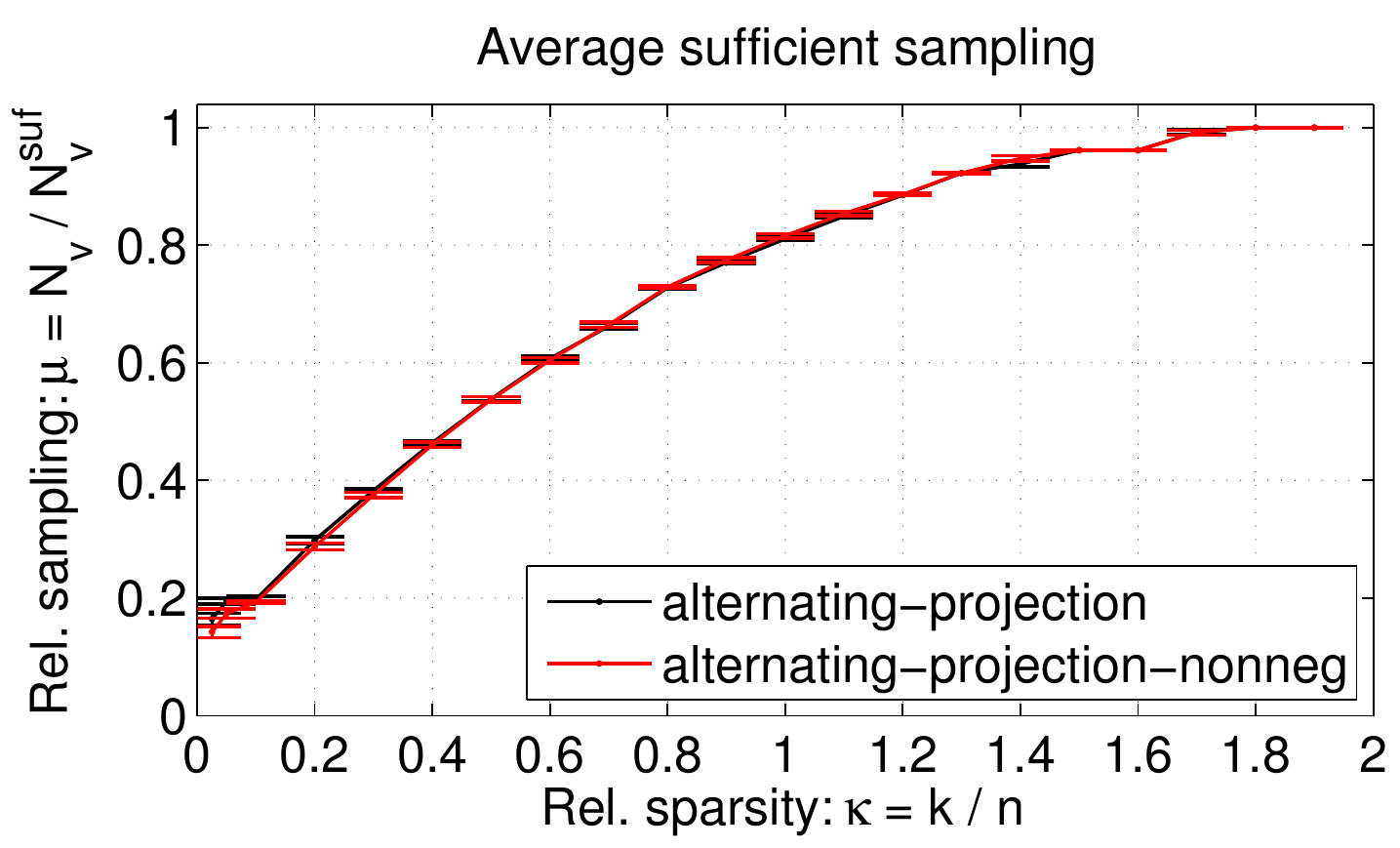}
\caption{Phase diagrams for \atv{}. Top row: reconstruction, middle row: uniqueness test. Left: \altprojnonneg{}, right: \altproj{}. Bottom row: average relative sufficient sampling point along with a $99\%$ confidence interval at each $\kappa$-value. No difference between classes is seen, unlike the \ellone{} case.\label{fig:anisophasediagrams}}
\end{figure}

The resulting reconstruction and uniqueness test phase diagrams are shown in Fig. \ref{fig:anisophasediagrams}. As for \ellone{}, we see a partition into `full-recovery' and `no-recovery' regimes separated by a sharp phase transition across $1$--$2$ projection views. The uniqueness test phase diagrams are identical to the reconstruction phase diagrams, except for a few cases in the transition region, for example for the smallest $\kappa$ values for \altprojnonneg{}. We explain these minor differences by the choice of numerical threshold for assessing recovery that is chosen  a priori to a constant $\epsilon$.

Contrary to the \ellone{} case, the phase diagrams for the signed and non-negative image class are identical, which is clearly seen in the bottom plot of Fig. \ref{fig:anisophasediagrams}, since the transition curves (average relative sampling for recovery) of the two classes coincide. It is interesting to observe such a fundamental difference with respect to having signed or non-negative images between \ellone{} and \atv{}. We can explain the difference by the fact that the constant vector, that is added to an \altproj{} image to obtain the non-negative version, is in the kernel of $\dmat{}^T$, and hence makes no difference for the \atv{} objective value. Further, according to Corollary \ref{cor:nullvector}, if an \altproj{} image is the unique \atv{} minimizer for its data, then the non-negative image obtained by adding a vector in the kernel of $\dmat{}^T$ will be the unique minimizer for its given data.

To study whether the phase diagrams depend on the image class we repeat the experiment for the \trununif{} image class. The resulting reconstruction and uniqueness testing phase diagrams are shown in Fig. \ref{fig:anisotrununifphasediagrams}. Again, the two phase diagrams are identical and show a sharp phase transition from the no-recovery to the full-recovery regime.  As can be seen from the right-most plot of the average relative sampling for recovery for the \trununif{} image class compared to the \altproj{} image class. The two curves are nearly identical at low and high relative sparsity values. However, in the mid-range there is a small difference corresponding to around one more projection view needed on average for the \trununif{} images to be recovered.

Both \altproj{} and \trununif{} image classes are constructed to yield images of a desired target sparsity, but in fundamentally different ways. The fact that the arising phase diagrams are so similar leaves us with the interpretation that the phase diagram and in particular the phase transition curve is governed mainly by the sparsity, while the particular image class has less influence.

\begin{figure}[tb]
 \newcommand{\wq}{0.48}
\begin{minipage}{\wq\linewidth}
\includegraphics[width=\linewidth,trim=0cm 0cm 0cm 0cm, clip]{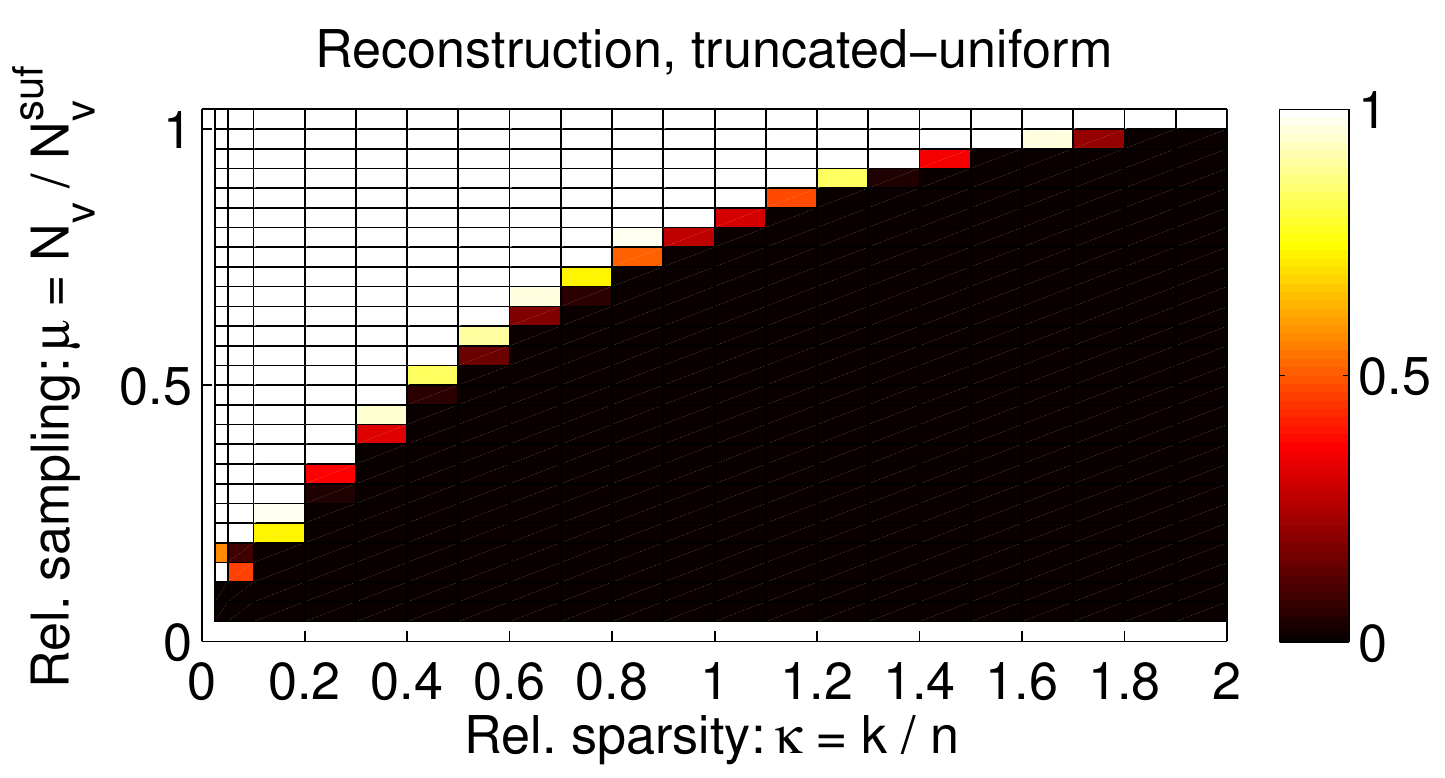}\\
\includegraphics[width=\linewidth,trim=0cm 0cm 0cm 0cm, clip]{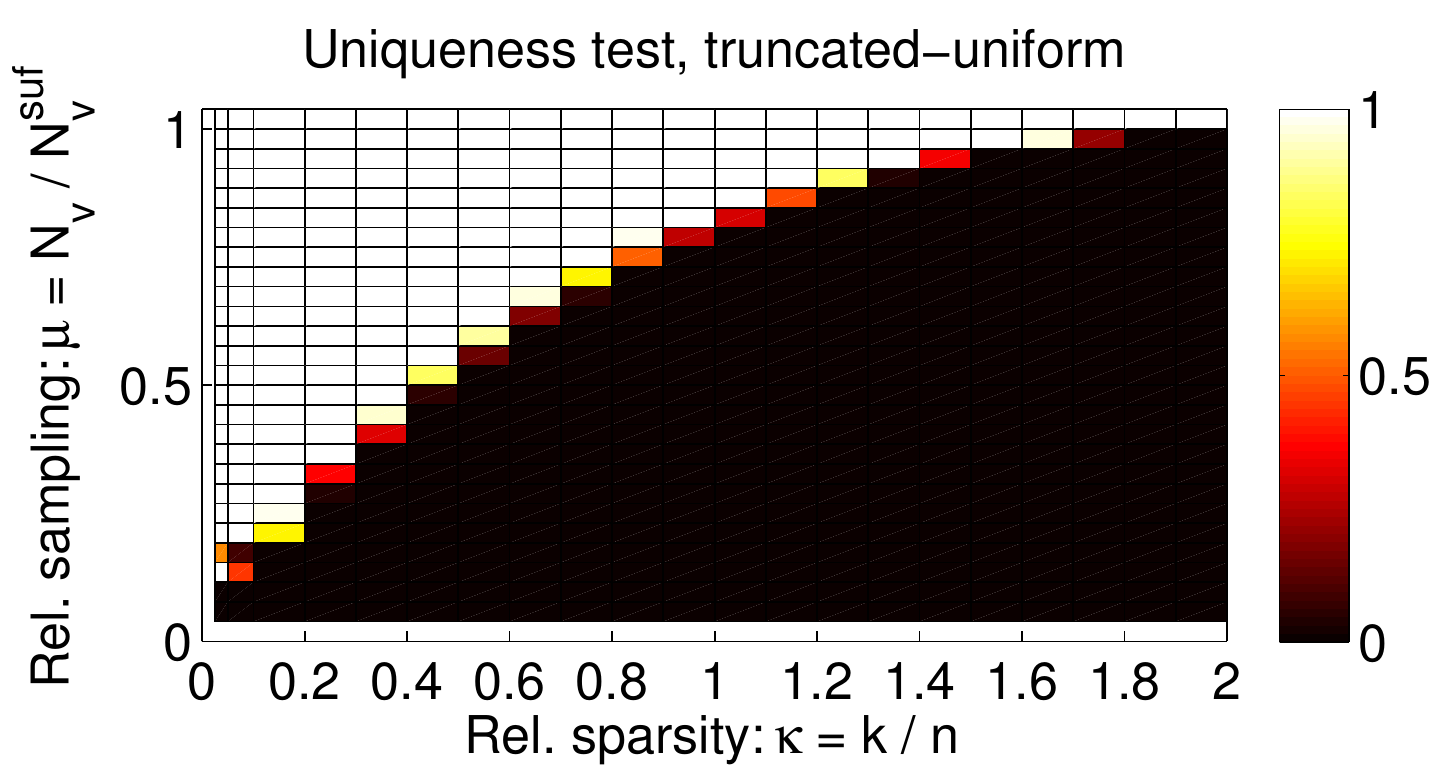}
\end{minipage}
\begin{minipage}{\wq\linewidth}
\includegraphics[width=\linewidth,trim=0cm 0cm 0cm 0cm,clip=true]{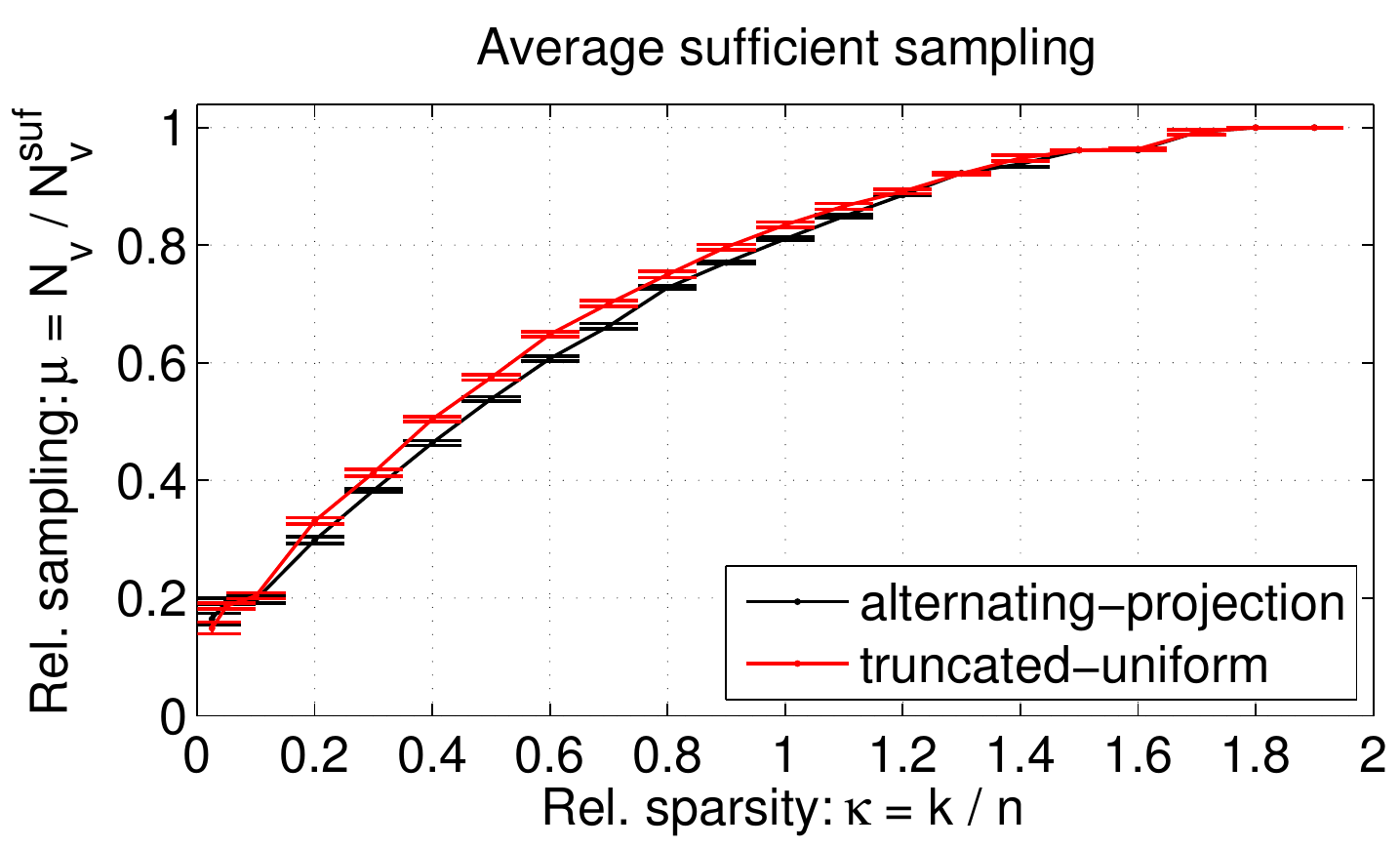}
\end{minipage}
\caption{Phase diagrams for \atv{} and \trununif{}. Top left: reconstruction, bottom left: uniqueness test. Right: 
average relative sufficient sampling point along with a $99\%$ confidence interval at each $\kappa$-value. Very small differences (one view more needed for recovering \trununif{} images) between classes is seen. \label{fig:anisotrununifphasediagrams}}
\end{figure}

\subsection{Phase diagrams for \itv{}}
We repeat the reconstruction and uniqueness test study for \itv{} with the \altproj{} image class designed for \itv{}. Recall that in contrast to the \ellone{} and \atv{} cases, Theorem \ref{thm:isotv} only provides a sufficient condition of solution uniqueness. This means that, in principle, instances that are not shown to be unique solutions still might be.

For \itv{}, the relative sparsity with respect to image size is between $0$ and $1$. 
As for \ellone{} we construct the phase diagram for the values $\kappa = 0.025, 0.05, 0.1, 0.2, ..., 0.9$ and $N_\text{v} = 1,\dots,32$, see Fig. \ref{fig:isotvphasediagrams}. Due to the numerically more challenging conic programs of isotropic TV solution accuracy was smaller than for \ellone{} and \atv{} and as a result we choose a the numerical threshold to $\epsilon = 10^{-3}$.

Once again we observe a partition into `full-recovery' and `no-recovery' regimes clearly separated by a sharp transition. Also, the reconstruction and uniqueness test phase diagram agree almost exactly and we ascribe again the minor differences to the uniform a priori choice of the numerical threshold $\epsilon$ for assessing recovery. The almost exact agreement between the phase diagrams is interesting considering the uniqueness test is only a sufficient condition. One may conjecture that the conditions of Theorem \ref{thm:isotv} are in fact also necessary and that a proof is only to be found.
\begin{figure}[tb]
\centering
\newcommand{\ww}{0.4}
\includegraphics[width=\ww\linewidth]{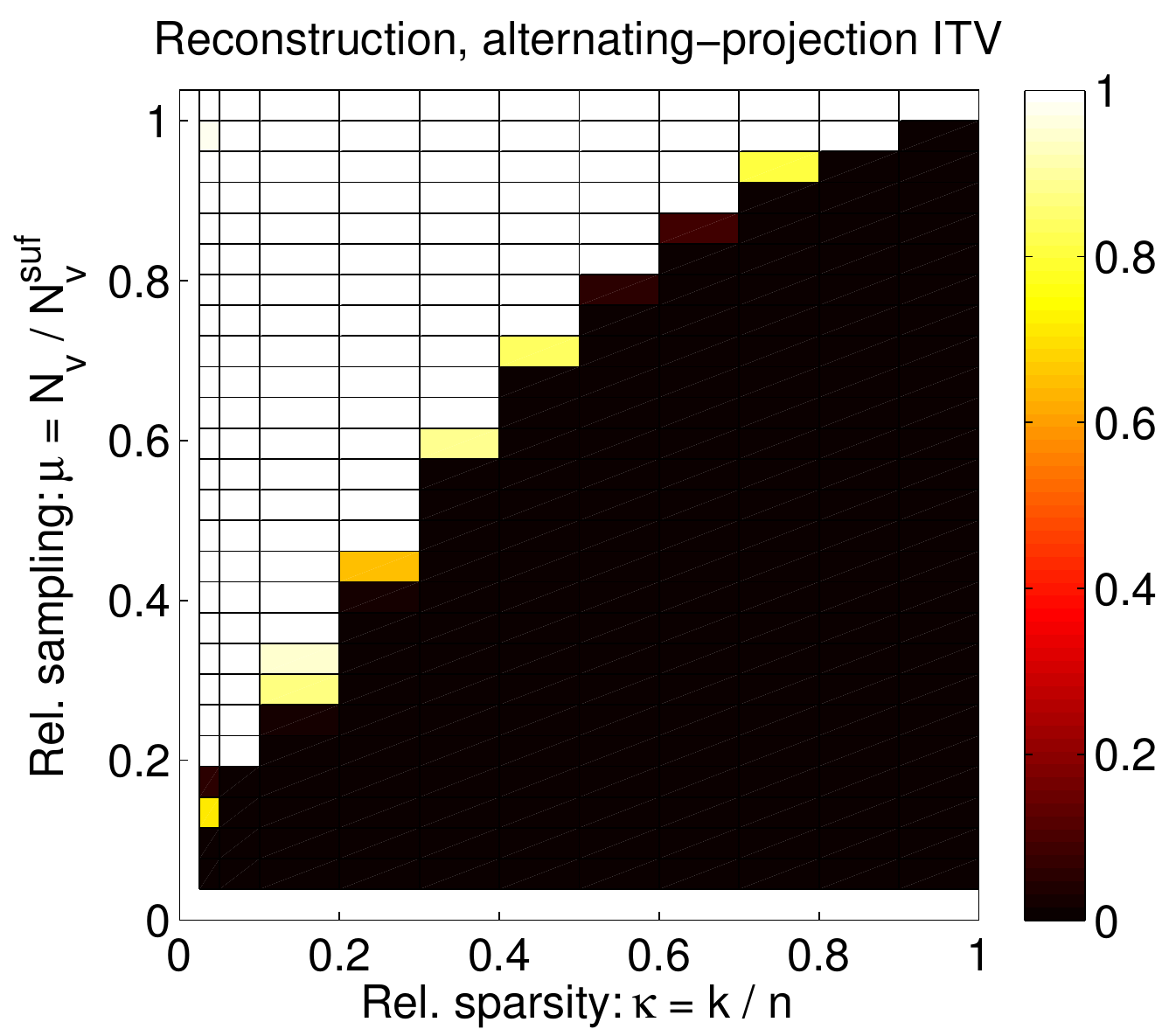}  
\includegraphics[width=\ww\linewidth]{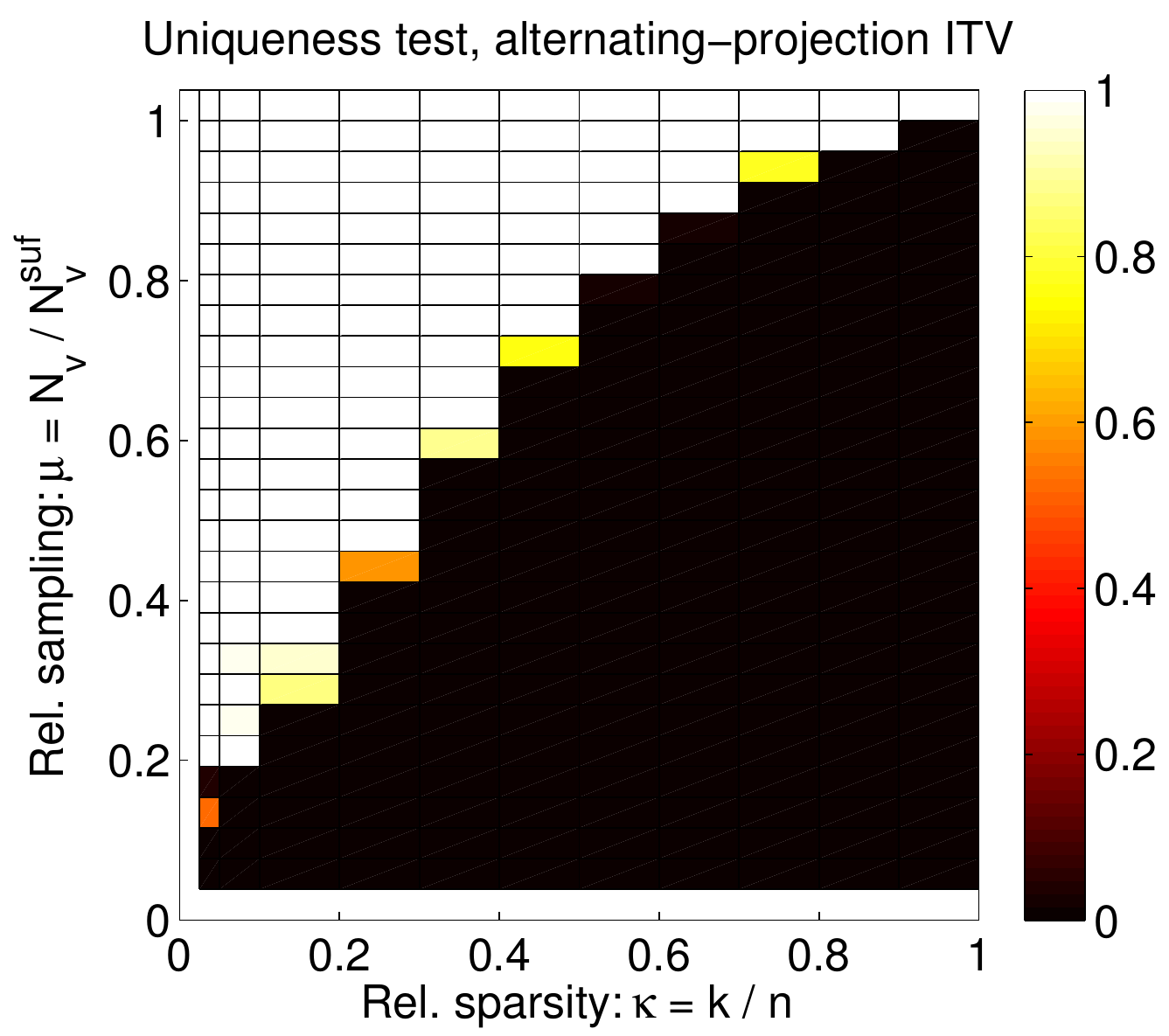}\\
\caption{Phase diagrams for \itv{}. Left:  Reconstruction,  right: uniqueness test.\label{fig:isotvphasediagrams}}
\end{figure}

Even though both \atv{} and \itv{} rely on gradient sparsity, comparing their phase diagrams do not reveal a straightforward conclusion as to which method provides the greatest undersampling potential because of the different sparsity measures. More comparisons of the two methods, for example a `cross-over study' of \itv{} reconstruction applied to the \atv{} image class and vice versa is beyond the scope of the present work, where the goal was to simply document phase transition behavior for CT measurements.

\subsection{Computational time for reconstruction vs. uniqueness testing}

We compared the computational times of reconstruction and uniqueness testing. 
All timing experiments were run in MATLAB 7.13 (R2011b) under Linux using MOSEK 6.0 on a Lenovo ThinkPad T430s with 
Intel Core i5-3320M processor (3 MB cache, up to 3.30 GHz) and 8 GB RAM, restricted to a single core. 

\begin{figure}[tb]
\centering
\newcommand{\ww}{0.37}
\includegraphics[width=\ww\linewidth]{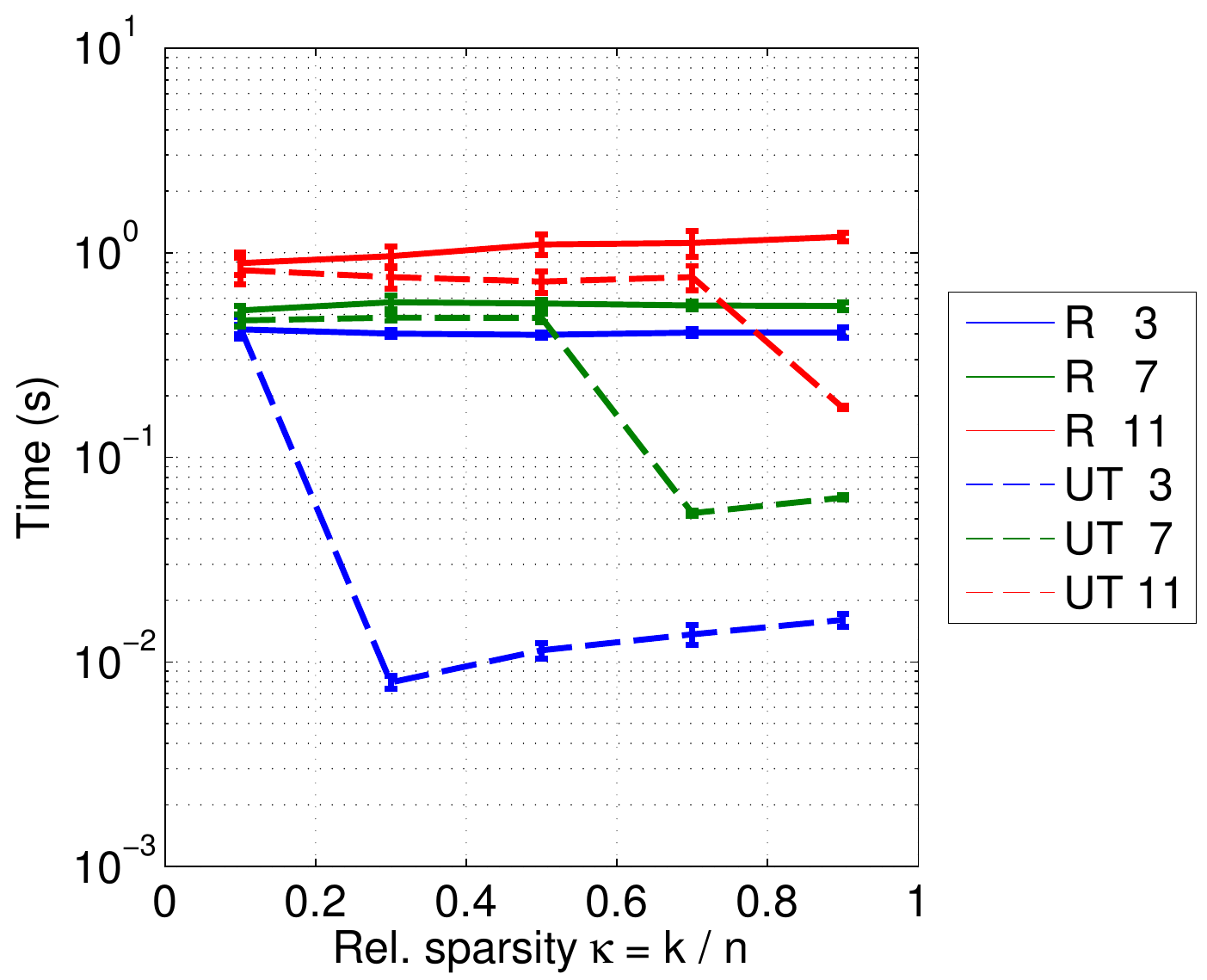}  
\includegraphics[width=\ww\linewidth]{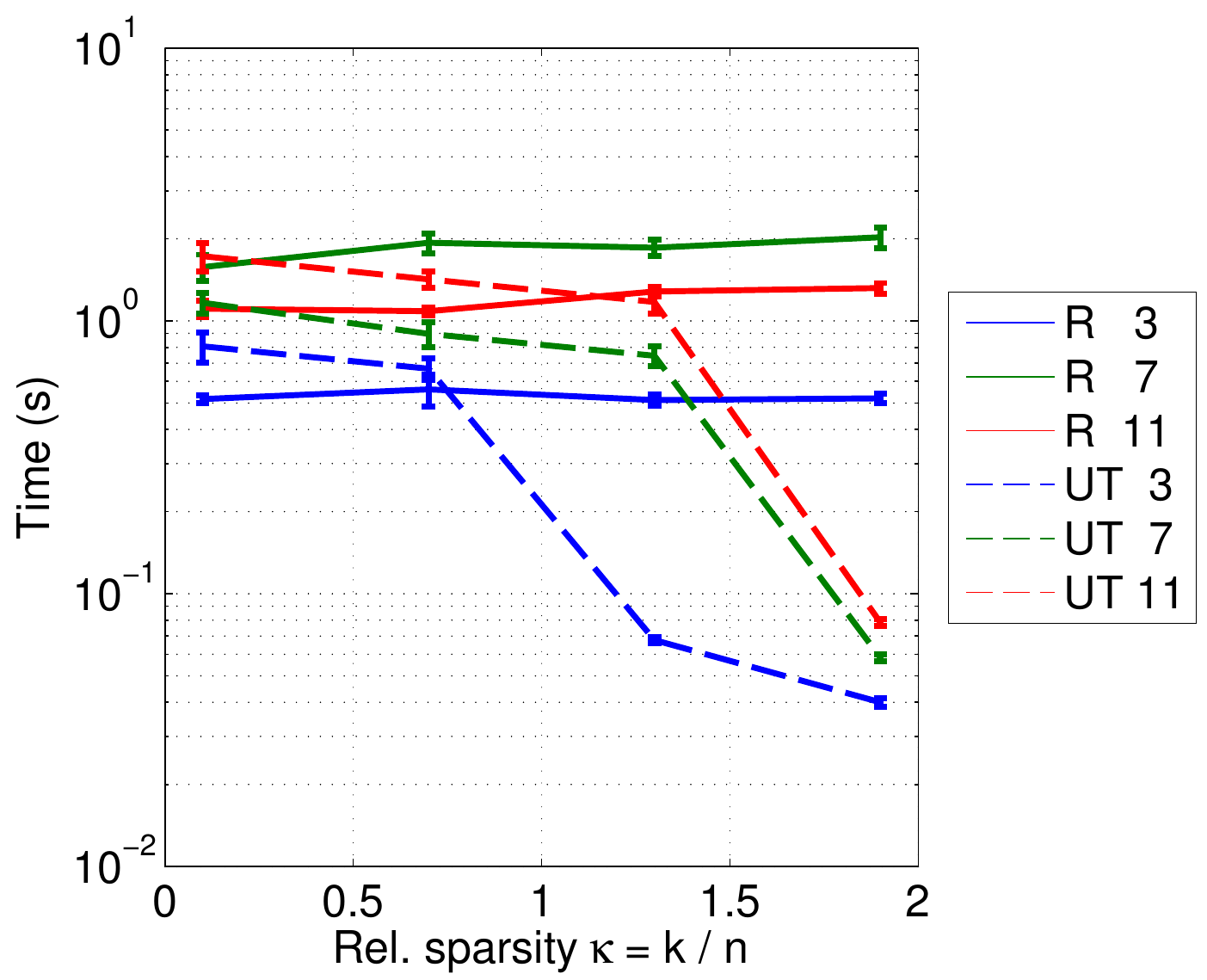}\\
\includegraphics[width=\ww\linewidth]{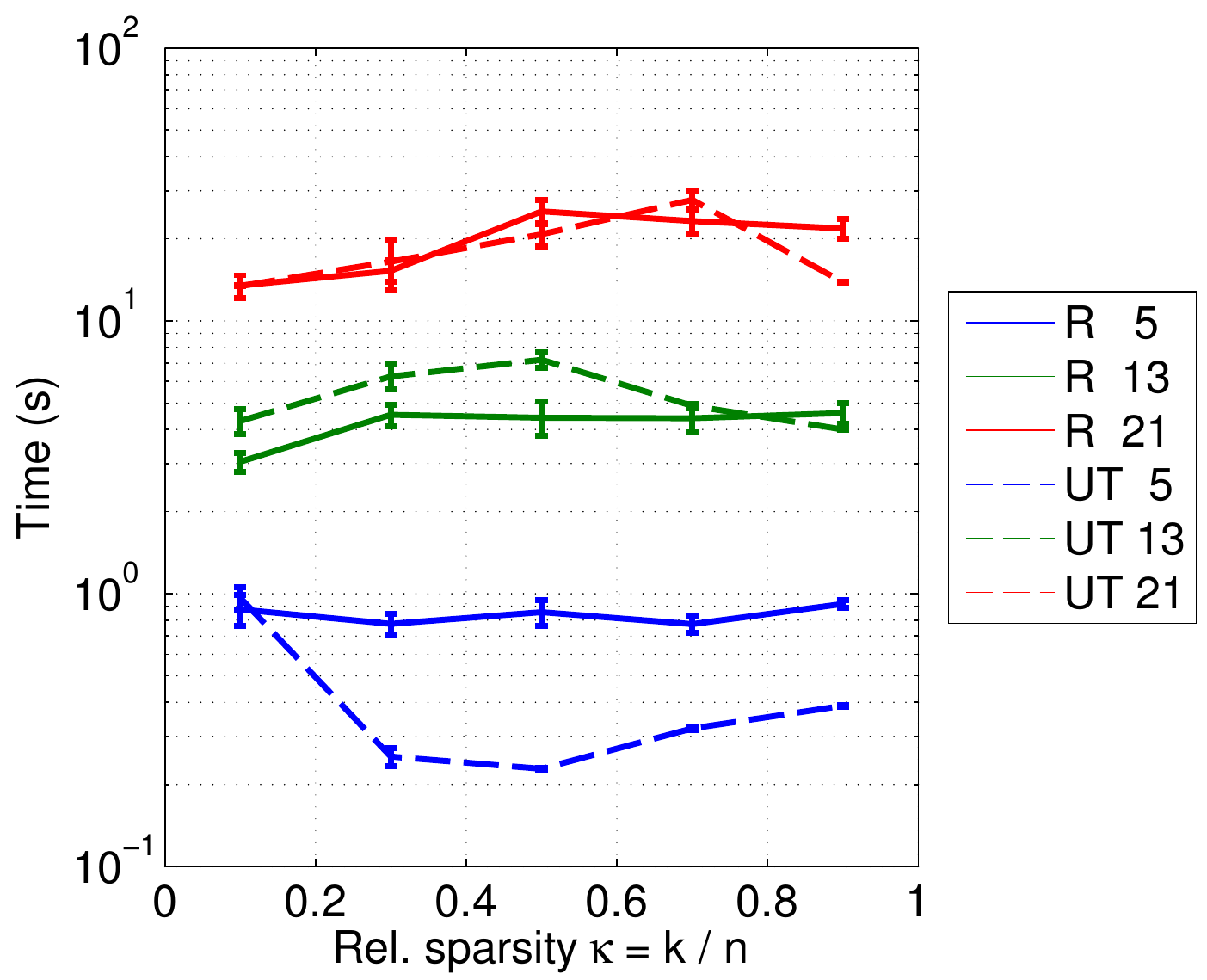} 
\includegraphics[width=\ww\linewidth]{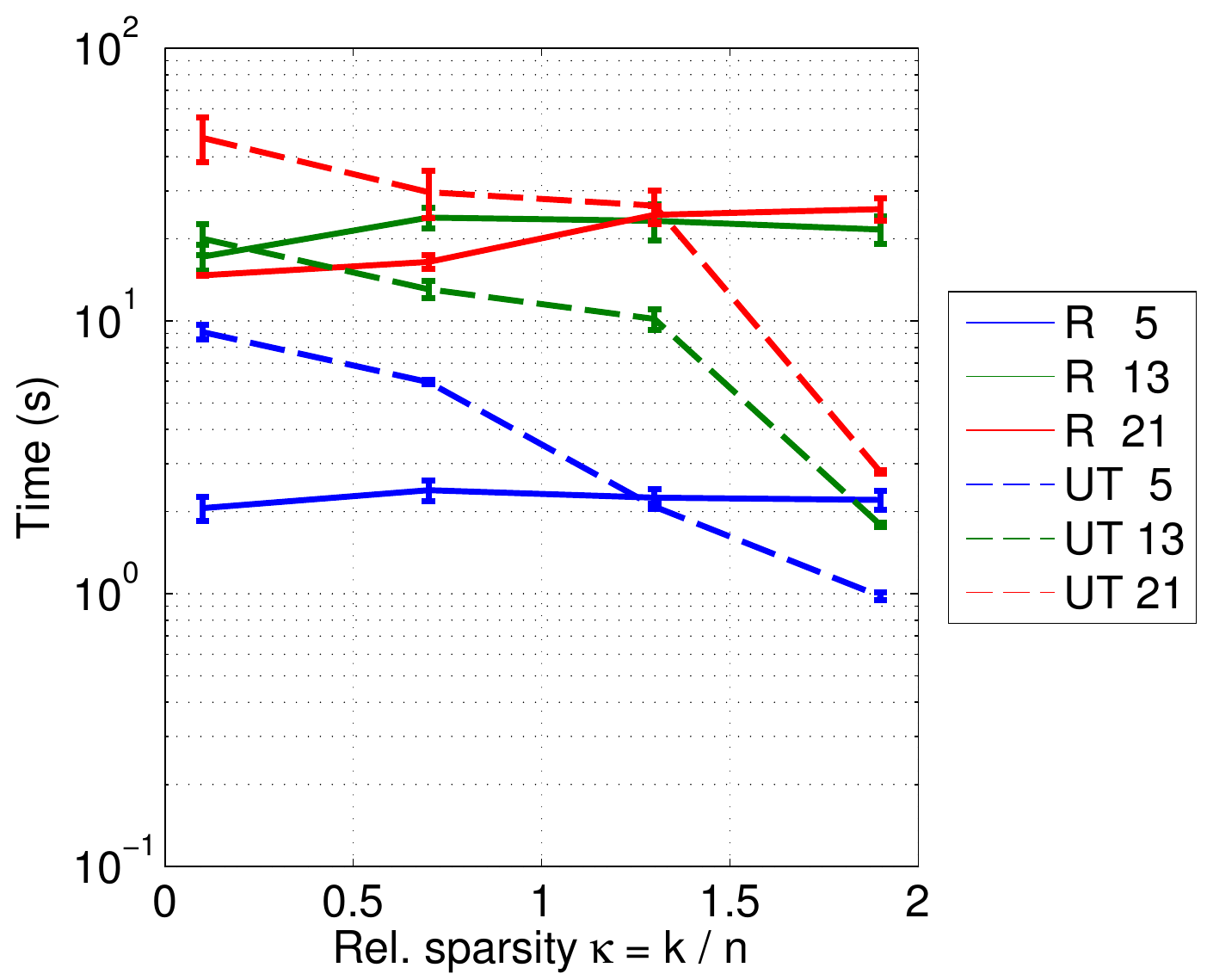}\\
\includegraphics[width=\ww\linewidth]{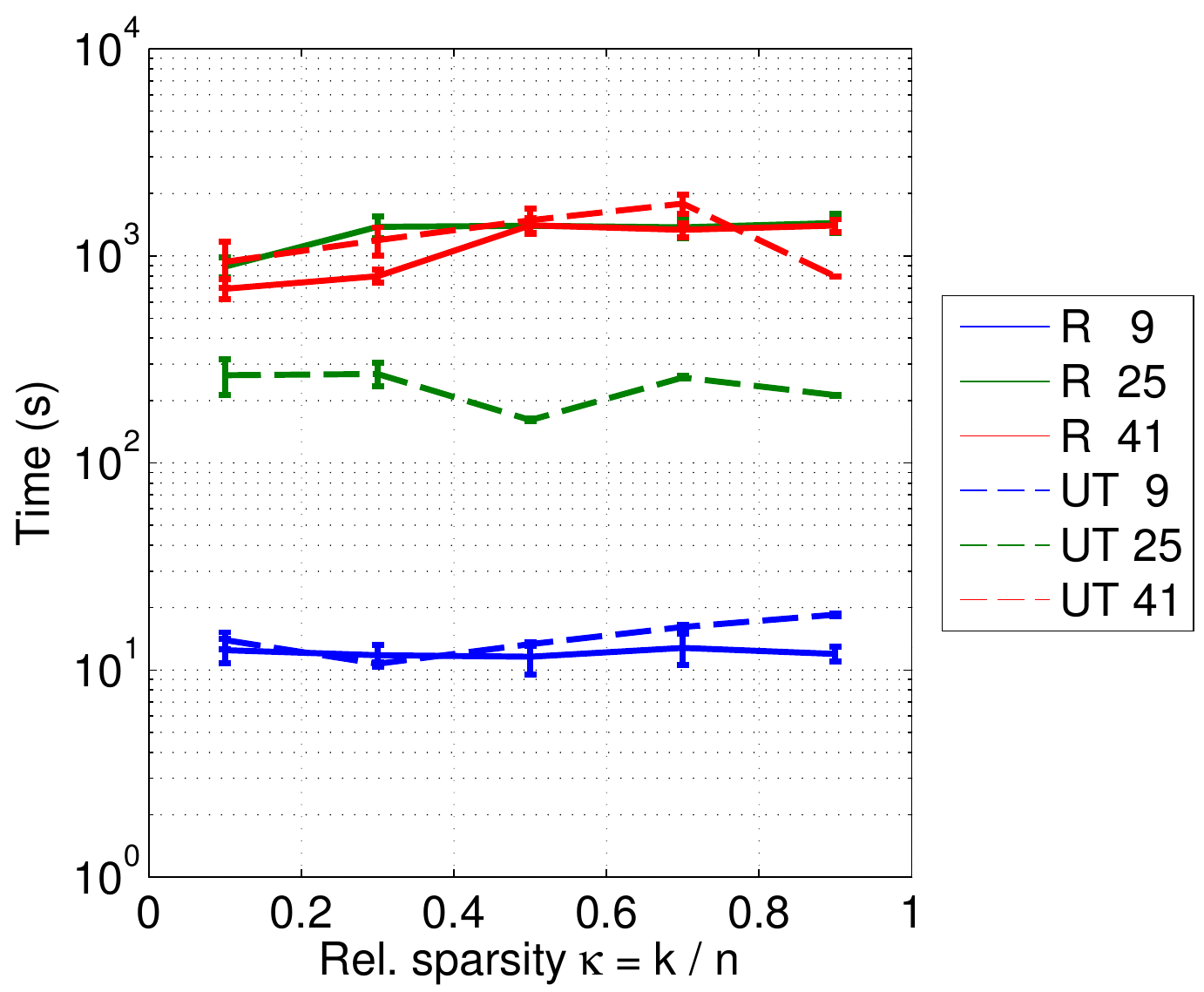}
\includegraphics[width=\ww\linewidth]{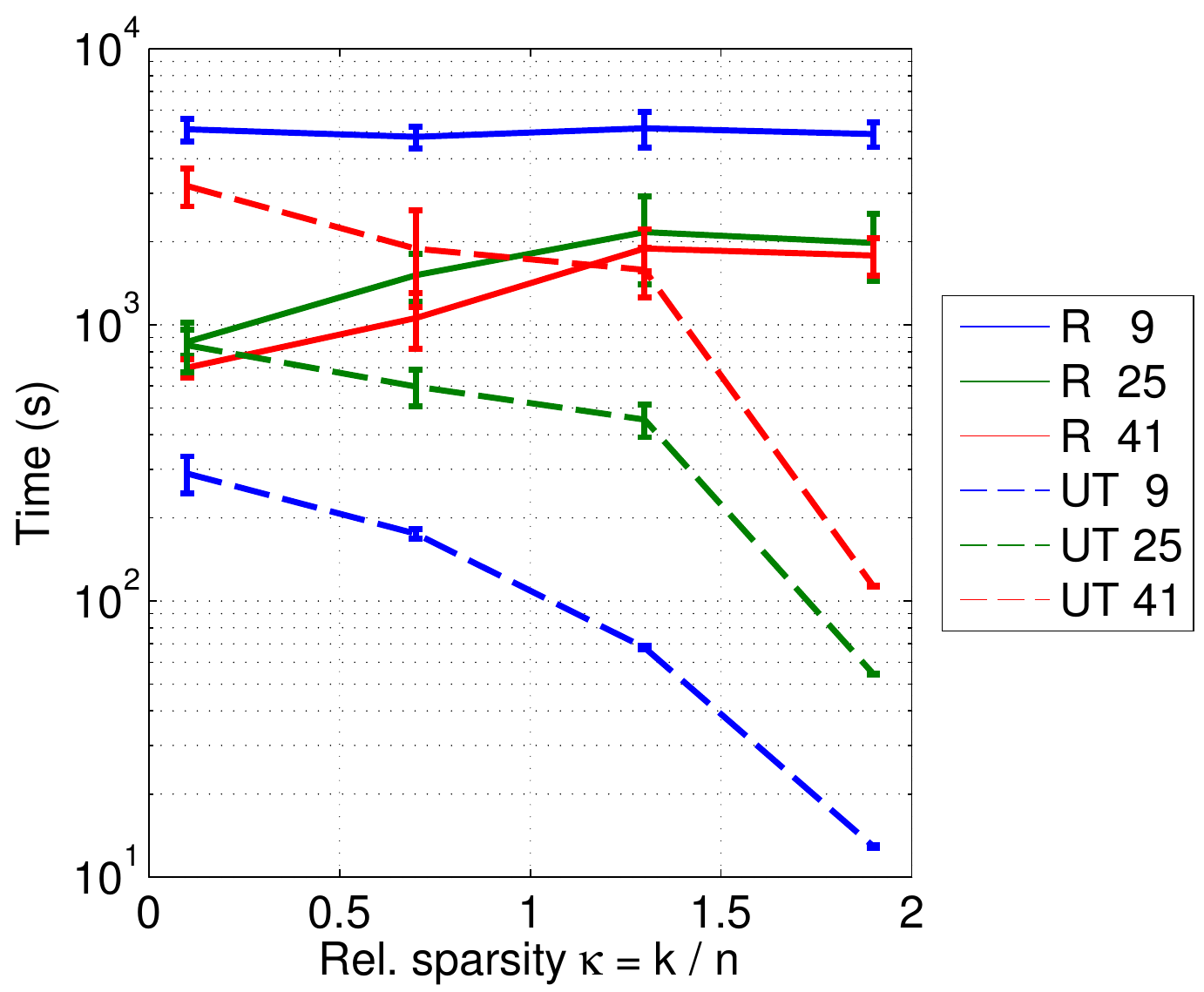}
\caption{Average reconstruction and uniqueness testing computational times with error bars of plus/minus one standard deviation over $10$ repetitions at each relative sparsity and relative sampling value. Left: \ellone{} for \signedspikes{} image class. Right: \atv{} for \altproj{} image class. Top: Size $32$, middle: size $64$, bottom: size $128$. Legend R: reconstruction (full lines), UT: uniqueness test (dashed lines).\label{fig:timing}}
\end{figure}

We choose experiments with low, medium and high relative sparsity and low, medium and high relative sampling cases to measure computational times for.
For the \signedspikes{} class we consider $10$ instances at each of the relative sparsity levels $\kappa = 0.1, 0.3, 0.5, 0.7, 0.9$. For image size $N_\text{side} = 32$, we use $3$, $7$ and $11$ views, for $N_\text{side} = 64$, we use $5$, $13$ and $21$ views and for $N_\text{side} = 128$ we take $N_\text{v} = 9$, $25$ and $41$. For \atv{} reconstruction, we consider the \altproj{} image class, for the same $N_\text{side} = 32, 64, 128$, relative sparsity $\kappa = 0.1, 0.7, 1.3, 1.9$ and the same number of views.

Results are shown in Fig. \ref{fig:timing}. For reconstruction, computational time generally shows little dependence with $\kappa$, if any, increasing $\kappa$ generally gives slightly increasing computational time. Uniqueness testing computational time tends to decrease with increasing $\kappa$. In several cases the uniqueness test is significantly faster than the reconstruction. In some of these cases, the relative sampling is low and the relative sparsity is high, which causes $\sysmat{}_I$ (in the \ellone{} case) to be non-injective, and the infinity-norm minimization problem needs not be solved. In other cases, for example the \ellone{} $N_\text{side} = 128$ case with $N_\text{v} = 25$, uniqueness testing is much faster across the relative sparsity range.
For uniqueness testing, computational time increases with the sampling level. For reconstruction, the low sampling cases are also the fastest, however the medium sampling case is not faster than the high sampling case in all cases, for example in the \ellone{} $N_\text{side} = 128$ case the times are comparable, and in the \atv{} $N_\text{side}=32$ case, the high sampling case is in fact faster.

We conclude that in general uniqueness testing is not slower than doing reconstruction, in most cases the computational times are comparable and in some cases, uniqueness testing is in fact faster. We note that uniqueness testing can be conveniently used in case of larger $\kappa$, where reconstruction tends to be the slower option.

It is clear that the reported computational times rely on our use of MOSEK for solving the optimization problems of reconstruction and uniqueness testing. The use of an interior-point method is what causes the computational time to increase so dramatically from the order of $10^0$ seconds at $N_\text{side}=32$ to $10^1$ seconds at $N_\text{side}=64$ and $10^3$ seconds at $N_\text{side} = 128$. With another optimization algorithm shorter running times may be observed with a different result of the comparison. 
However, our intention with the present study is not an exhaustive algorithm comparison, but merely to demonstrate that uniqueness testing can be accomplished in the same time, or faster, than reconstruction.

\section{Conclusion} \label{sec:conclusion}

The present work was motivated by understanding quantitatively how much undersampling is admitted for sparsity-exploiting reconstruction methods for CT given the lack of theoretical guarantees from compressed sensing. 
Our results demonstrate empirically that sharp average-case phase transitions from no recovery to full recovery as seen in compressed sensing also occur for CT measurements across a range of image classes and sparse reconstruction methods. The location of the phase transition, i.e., the level of sampling sufficient for recovery depends on the reconstruction method and is to a large degree is governed by the image sparsity, quite independent of the particular image class. 

Due to the inherently empirical nature of our study design it is clear that our results do not imply any theoretical guarantee. Further, being average-case results leaves the chance for single instance to require more or fewer samples for recovery than predicted by the average case. Nevertheless, we think the results may be used or extended to serve as guide lines for how to many CT samples to acquire based on prior knowledge about the image class and sparsity. Natural future work would be extensions toward more realistic scenarios including noisy data, model inconsistencies, specialized image classes, etc.

Constructing phase diagrams by reconstruction cannot establish solution uniqueness, which makes the uniqueness test more desirable from a theoretical perspective. However, we observed almost exact agreement between reconstruction and uniqueness test phase diagrams, so in practice the advantage may be negligible. Also, the reconstruction approach has the advantage that it can be run directly on any reconstruction problem with no need to derive specific uniqueness conditions and as such is more easily generalizable.

In our view, the presented empirical evidence suggests that an underlying theoretical explanation of phase transition behavior in CT may exist. Establishing such theory would have large implications for the understanding of sparse reconstruction in CT and we hope that the present results can serve as a step towards this goal.

\section*{Acknowledgements}
The authors are grateful to Emil Y. Sidky for inspiring discussions. This work was supported in part by Advanced Grant 291405 `HD-Tomo' from the European Research Council and by grant 274-07-0065 `CSI:\ Computational Science in Imaging' from the Danish
Research Council for Technology and Production Sciences.
JSJ acknowledges support from The Danish Ministry of Science, Innovation and Higher Education's Elite Research Scholarship.
CK and DAL acknowlege support from the Deutsche Forschungsgemeinschaft (DFG) within the project `Sparse Exact and Approximate Recovery' under grant LO 1436/3-1.

\bibliographystyle{unsrt}
\bibliography{uniqueness_paper.bib}

\end{document}